\documentclass[notitlepage,11pt,reqno]{amsart}
\usepackage[foot]{amsaddr}

\RequirePackage[normalem]{ulem}
\RequirePackage{amssymb,nicefrac,bm,upgreek,mathtools,verbatim}
\RequirePackage[mathscr]{eucal}
\RequirePackage{dsfont}
\RequirePackage{accents}
\RequirePackage[final]{hyperref}
\usepackage{csquotes}


\usepackage[margin=1in]{geometry}
\linespread{1.02}
\allowdisplaybreaks

\newcommand{\stkout}[1]{\ifmmode\text{\sout{\ensuremath{#1}}}\else\sout{#1}\fi}
\usepackage{color}
\definecolor{dmagenta}{rgb}{.6,.1,.5}
\definecolor{dblue}{rgb}{.0,.0,.5}
\definecolor{mblue}{rgb}{.0,.0,.7}
\definecolor{ddblue}{rgb}{.0,.0,.4}
\definecolor{dred}{rgb}{.6,.0,.0}
\definecolor{dgreen}{rgb}{.0,.5,.0}
\definecolor{Eeom}{rgb}{.0,.0,.5}

\usepackage[normalem]{ulem}
\newtheorem{lemma}{Lemma}[section]
\newtheorem{theorem}{Theorem}[section]
\newtheorem{proposition}{Proposition}[section]
\newtheorem{corollary}{Corollary}[section]

\theoremstyle{definition}
\newtheorem{definition}{Definition}[section]
\newtheorem{assumption}{Assumption}[section]
\newtheorem{hypothesis}{Hypothesis}[section]

\newtheorem{example}{Example}[section]

\theoremstyle{remark}
\newtheorem{remark}{Remark}[section]
\numberwithin{equation}{section}
\hypersetup{
  colorlinks=true,
  citecolor=mblue,
  linkcolor=mblue,
  frenchlinks=false,
  pdfborder={0 0 0},
  naturalnames=false,
  hypertexnames=false,
  breaklinks}
\usepackage[capitalize,nameinlink]{cleveref}
\usepackage[abbrev,msc-links,nobysame]{amsrefs} 
\crefname{section}{Section}{Sections}
\crefname{subsection}{Section}{Sections}
\crefname{hypothesis}{Hypothesis}{Conditions}
\crefname{assumption}{Assumption}{Assumptions}
\crefname{lemma}{Lemma}{Lemmas}
\Crefname{figure}{Figure}{Figures}

\crefformat{equation}{\textup{#2(#1)#3}}
\crefrangeformat{equation}{\textup{#3(#1)#4--#5(#2)#6}}
\crefmultiformat{equation}{\textup{#2(#1)#3}}{ and \textup{#2(#1)#3}}
{, \textup{#2(#1)#3}}{, and \textup{#2(#1)#3}}
\crefrangemultiformat{equation}{\textup{#3(#1)#4--#5(#2)#6}}%
{ and \textup{#3(#1)#4--#5(#2)#6}}{, \textup{#3(#1)#4--#5(#2)#6}}%
{, and \textup{#3(#1)#4--#5(#2)#6}}

\Crefformat{equation}{#2Equation~\textup{(#1)}#3}
\Crefrangeformat{equation}{Equations~\textup{#3(#1)#4--#5(#2)#6}}
\Crefmultiformat{equation}{Equations~\textup{#2(#1)#3}}{ and \textup{#2(#1)#3}}
{, \textup{#2(#1)#3}}{, and \textup{#2(#1)#3}}
\Crefrangemultiformat{equation}{Equations~\textup{#3(#1)#4--#5(#2)#6}}%
{ and \textup{#3(#1)#4--#5(#2)#6}}{, \textup{#3(#1)#4--#5(#2)#6}}%
{, and \textup{#3(#1)#4--#5(#2)#6}}

\crefdefaultlabelformat{#2\textup{#1}#3}
%
\newcommand{\ttup}[1]{\textup{(}#1\textup{)}}

\newcommand{\cA}{{\mathcal{A}}}   
\newcommand{\sA}{{\mathscr{A}}}   
\newcommand{\sB}{{\mathscr{B}}}
\newcommand{\DD}{\mathbb{D}}
\newcommand{\cD}{{\mathcal{D}}}   
\newcommand{\cG}{{\mathcal{G}}}   %
\newcommand{\cI}{{\mathcal{I}}}   %
\newcommand{\cJ}{{\mathcal{J}}}   %
\newcommand{\cK}{{\mathcal{K}}}   
\newcommand{\cL}{{\mathcal{L}}}   
\newcommand{\cP}{{\mathcal{P}}}   
\newcommand{\cQ}{{\mathcal{Q}}}   
\newcommand{\cR}{{\mathcal{R}}}   
\newcommand{\Lyap}{{\mathcal{V}}} 
\newcommand{\sX}{{\mathscr{X}}}   
\newcommand{\sZ}{{\mathscr{Z}}}   
\newcommand{\sV}{\mathscr{V}}     
\newcommand{\fX}{{\mathfrak{X}}}  
\newcommand{\cT}{{\mathcal{T}}}   

\newcommand{\RR}{\mathds{R}}
\newcommand{\NN}{\mathds{N}}
\newcommand{\ZZ}{\mathds{Z}}

\newcommand{\Rd}{\mathds{R}^{d}}
\DeclareMathOperator{\Exp}{\mathbb{E}}

\newcommand{\D}{\mathrm{d}}
\newcommand{\E}{\mathrm{e}}

\newcommand{\Ind}{\mathds{1}}   

\newcommand{\abs}[1]{\lvert#1\rvert}
\newcommand{\norm}[1]{\lVert#1\rVert}
\newcommand{\babs}[1]{\bigl\lvert#1\bigr\rvert}
\newcommand{\Babs}[1]{\Bigl\lvert#1\Bigr\rvert}

\newcommand\transp{^{\mathsf{T}}}
\newcommand{\df}{\coloneqq}

\DeclareMathOperator*{\diag}{diag}

\newcommand{\order}{{\mathscr{O}}}

\newcommand{\grad}{\nabla}

\newcommand{\ttl}{\Large Exponential ergodicity and steady-state approximations for
\\[3pt] a class of Markov processes
under fast regime switching}

\begin{document}
\title[Ergodicity of a class of Markov processes under fast regime switching]{\ttl}

\author[Ari Arapostathis]
{Ari Arapostathis$^\dag$
}
\address{$^\dag$ Department of Electrical and Computer Engineering\\
The University of Texas at Austin\\
2501 Speedway, EERC 7.824\\
Austin, TX~~78712}
\email{ari@utexas.edu}

\author[Guodong Pang]
{Guodong Pang$^\ddag$
}
\author[Yi Zheng]{Yi Zheng$^\ddag$
}
\address{$^\ddag$ The Harold and Inge Marcus Department of Industrial and
Manufacturing Engineering,
College of Engineering,
Pennsylvania State University,
University Park, PA 16802}
\email{$\lbrace$gup3,yxz282$\rbrace$@psu.edu}

\begin{abstract} 
We study ergodic properties
of a class of Markov-modulated general birth-death processes under fast regime switching.
The first set of results concerns the ergodic properties of 
the properly scaled joint Markov process with a parameter that is taken large.
Under very weak hypotheses, we show that if
the averaged process is exponentially ergodic for large
values of the parameter, then the same applies to the original  joint Markov process.
The second set of results concerns steady-state diffusion approximations,
under the assumption that the `averaged' fluid limit exists.
Here, we establish convergence rates for the moments of the approximating diffusion process to 
those of the Markov modulated birth-death process.
This is accomplished by comparing 
the generator of the  approximating diffusion and that of the joint Markov process.
We also provide several examples which demonstrate how the theory can be applied.
\end{abstract}

\subjclass[2010]{60K25, 90B20, 90B36, 49L20, 60F17}

\keywords{Markov modulated process, birth-death process, exponential ergodicity, steady-state approximations, 
fast regime switching}

\maketitle

\section{Introduction}

There has been a considerable amount of
research on Markov-modulated birth-death processes.
The rate control problem for Markov-modulated single server queue has been addressed
in \cite{RMH13,LQA17,FA09}, while the scheduling control problem for Markov-modulated
critically loaded multiclass many-server queues has been considered in \cite{ADPZ19},
in which exponential ergodicity under a static priority rule is also studied.
The papers \cite{ABMT,JMTW17} address functional limit theorems 
for Markov-modulated Markovian infinite server queues. 
See also the work on the functional limit theorem for Markov-modulated
compound Poisson processes in \cite{PZ17}.
We refer the readers to \cite{Zeifman98,MS14} for the study of stability and 
instability for birth-death processes.

In this paper, we study a class of general birth-death processes
with countable state space and bounded jumps.
Meanwhile, the transition rate functions of the birth-death process 
depend on an underlying continuous time Markov process with finite state space.
An asymptotic framework is considered under which
the Markov-modulated birth-death process is indexed by a scaling parameter $n$, 
with $n$ getting large.
The transition rate matrix of the underlying Markov process 
is of order $n^{\alpha}$, $\alpha>0$, and
the jump size of the birth-death process shrinks at a rate of $n^{\beta}$ with
$\beta\df \max\{\nicefrac{1}{2},1 -\nicefrac{\alpha}{2}\}$.
This scaling has been used in \cite{ABMT,JMTW17,ADPZ19} 
for some special birth-death queueing processes.

In this asymptotic framework, we first provide a sufficient condition 
for the scaled Markov-modulated process
to be exponentially ergodic.
We show that if the `averaged' birth-death process satisfies a Foster-Lyapunov
criterion for a certain
class of Lyapunov functions, then the original Markov-modulated
process also has the same property.
Next, we study steady-state approximations of the Markov-modulated process. 
We construct diffusion models, and show that their steady-state moments approximate
those of the joint Markov process with a rate
$n^{- (\nicefrac{1}{2}\wedge \nicefrac{\alpha}{2})}$.
This problem is motivated by \cite{Gurvich14}, 
in which steady-state approximations for a general birth-death process
have been considered. 
However, the problem is quite challenging in this paper, 
since we need to consider the variabilities of the underlying Markov process,
and the martingale argument in the above referenced work cannot be applied.
We also present some examples from queueing systems
and show that the assumptions presented are easy to verify.

The aforementioned result of exponential ergodicity is stated in \cref{T2.1}.
We consider a large class of scaled Markov-modulated  general birth-death processes,
whose transition rate functions have a linear growth around some distinguished 
point. The state processes are also centered at this point. 
The increments of the transition rate functions are assumed to have affine growth.
This assumption is relaxed in \cref{C2.1}, 
in which a stronger Foster-Lyapunov criterion is required instead. 
The technique of proofs for this set of results is inspired by \cite{Kha12}, which studies
stochastic differential equations with rapid Markovian switching. 
We construct a sequence Lyapunov functions via Poisson equations associated with the 
extended generator of the background Markov process.
The technique employed for our results is more involved, 
since a class of Markov processes under weak hypotheses is considered,
and the scaling parameter affects the state and background processes at the same time. 
In the study of ergodicity of 
a Markov-modulated multiclass $M/M/n + M$ queue under a static priory scheduling policy 
in \cite[Theorem 4]{ADPZ19}, 
the authors observe an effect of `averaged' Halfin-Whitt regime, and also use
a technique similar in spirit to the method in \cite{Kha12}.
In this paper, we consider a more general model which includes 
the one in \cite[Theorem 4]{ADPZ19} as a special case.
In \cref{Ex3.3}, we also present that the result in \cite[Theorem 4]{ADPZ19} 
holds under some weaker condition, and its proof may be simplified a lot following 
the approach as in \cref{C2.1}. 
In \cref{R2.3}, we emphasize that the result in this part can be applied 
in the study of uniformly exponential ergodicity of Markov-modulated multiclass
$M/M/n$ queues  with positive safety staffing.

The main result on steady-state approximations is stated in \cref{T2.2}.
Here, we first construct `averaged' diffusion models, which 
capture the variabilities of the state process and the 
underlying Markov process at the same time.  
In these diffusion models, the variabilities of the state process are asymptotically
negligible at a rate $n^{1 - 2\beta}$ when $\alpha < 1$, while 
the variabilities of the underlying process are asymptotically
negligible at a rate $n^{1 - \alpha}$ when $\alpha > 1$.
The gap between the moments of the steady state of the approximating diffusion models
and those of the joint Markov process shrinks 
at rate of $n^{\nicefrac{\alpha}{2}\wedge \nicefrac{1}{2}}$. 

The result in \cref{T2.2}  extends the results of \cite{Gurvich14} 
to Markov-modulated birth-death processes.
The proofs in \cite{Gurvich14} rely on the gradient estimates of solutions 
of a sequence of Poisson equations associated with diffusions and 
a martingale argument.
Under a uniformly exponential ergodicity assumption for the diffusion models,
the gradient estimates we used for the Poisson equation are the same as those 
found in \cite{Gurvich14}.
However, the martingale argument is difficult to apply in obtaining \cref{T2.2}.
On the other hand, the proof of \cite[Lemma 8]{ADPZ19}
concerning the convergence of mean 
empirical measures for Markov-modulated multiclass $M/M/n + M$
queues  uses a martingale argument, but
considers only compactly supported smooth functions. 
The analogous argument cannot be used in this paper, since we need to consider 
a class of general birth-death processes and the
Lyapunov functions are unbounded. 
So we develop a new approach by
exploring the structural relationship between the generator of the joint Markov process
and that of the diffusion models in \cref{L5.1}.
This is accomplished by matching the second order derivatives 
associated with the covariance of underlying Markov process
using the solutions of Poisson equations
which involve the difference between the 
coefficients of the original state process and those of the `averaged' diffusion models. 
In \cref{L5.2}, we also provide some crucial estimates for the
residual terms arising from the difference  of the two generators. 

Stability of switching diffusions has been studied extensively. 
Exponential stability for nonlinear Markovian switching diffusion processes 
has been studied in \cite{Mao99}, while $p$-stability and asymptotic stability 
for regime-switching diffusions have been addressed in \cite{Kha07}. 
For an underlying Markov process with a countable state space,
the stability of regime-switching 
diffusions has been considered in \cite{Shao14}. 
In these studies, the state and background Markov processes are unscaled, and
there is no `averaged' system. Under fast regime switching,  
we observe an `averaged' effect, 
and study how the ergodic properties of the `averaged' system
are related to those of the original system. 

\subsection{Organization of the paper}
The notation used in this paper is summarized in the next subsection.
In \cref{S2}, we describe the model of Markov-modulated general 
birth-death processes.
We present the results of exponential ergodicity and 
steady-state approximations in \cref{S2.1,S2.2}, respectively.
\cref{S3} contains some examples from queueing systems.
\cref{S4} is devoted to the proofs of \cref{T2.1,C2.1}.
The proofs of \cref{T2.2,C2.2} are given in \cref{S5}.
\cref{prop2.1} concerning the diffusion limit is given in \cref{AppA}.

\subsection{Notation}
We let $\NN$ and $\ZZ_+$ denote the set of natural numbers 
and the set of nonnegative integers, respectively. 
Let $\RR^d$ denote the set of $d$-dimensional real vectors, for $d\in\NN$.
The Euclidean norm and inner product in $\RR^d$ are 
denoted by $\abs{\,\cdot\,}$ and $\langle\,\cdot\,,\,\cdot\,\rangle$, respectively.
For $x\in\RR^d$, $x\transp$ denotes the transpose of $x$.
We denote the indicator function of a set $A\subset\RR^d$ by $\Ind_A$.
The minimum (maximum) of $a,b\in\RR$ is denoted by $a\wedge b\,(a\vee b)$,
and $a^{\pm} \df 0\vee(\pm a)$.
We let $e$ denote the vector in $\RR^d$ with all entries equal to $1$, and
$e_i$ denote the vector in $\RR^d$ with the $i^{\mathrm{th}}$ entry equal to $1$
and other entries equal to $0$.
The closure of a set $A\subset\RR^d$ is denoted by $\Bar{A}$.
The open ball of radius $r$ in $\RR^d$, centered at $x\in\RR^d$,
is denoted by $B_r(x)$.

For a domain $D\subset\RR^d$, the space $C^k(D)$ ($C^{\infty}(D)$) denotes 
the class of functions whose partial derivatives up to order $k$ (of any order) exist and
are continuous, and $C^k_b(D)$ stands for the functions in $C^{k}(D)$,
whose partial derivatives up to order $k$ are continuous and bounded.
The space $C^{k,1}(D)$ is the class of functions whose partial derivatives
up to order $k$ are Lipschitz continuous.
We let
$$
[f]_{2,1;D} \,\df\, \sup_{x,y\in D, x\neq y}
\frac{\babs{\grad^2 f(x) - \grad^2 f(y)}}{\abs{x - y}}
$$
for a domain $D\subset\RR^d$ and $f\in C^{2,1}(D)$.
For a nonnegative function $f\in C(\RR^d)$, 
we use $\order(f)$ to denote the space of function
$g\in C(\RR^d)$ such that
$\sup_{x\in\RR^d} \frac{\abs{g(x)}}{1 + f(x)} < \infty$.
By a slight abuse of notation,
we also let $\order(f)$ denote a generic member of this space.
Given any Polish space $\mathcal{X}$, we let $\cP(\mathcal{X})$
denote the space of probability measures on $\mathcal{X}$,
endowed with the Prokhorov metric.
For $\mu\in\cP(\mathcal{X})$ and a Borel measurable map 
$f\colon \mathcal{X}\mapsto \RR$,
we often use the simplified notation
$\mu(f) \df \int_{\mathcal{X}} f\,\D{\mu}$.

\section{Model and Results}\label{S2}

Let $Q= [q_{ij}]_{i,j\in\cK}$, with $\cK\df\{1,\dots,k_\circ\}$,
be an irreducible stochastic rate matrix,
and $\pi\df\{\pi_1,\dots,\pi_{k_\circ}\}$ denote its (unique)
invariant probability distribution.
We fix a constant $\alpha>0$.
For each $n\in N$, let $J^n$ denote the
finite-state irreducible continuous-time Markov chain
with state space $\cK$ and transition rate matrix $n^{\alpha}Q$.
In addition, for each $n\in\NN$ and $k\in\cK$, let
$\fX^n \subset\RR^d$ be a countable set, with no accumulation
points in $\Rd$, and
$R^n_k=\bigl[r^n_k(x,y)\bigr]_{x,y\in\fX^n}$ be a stochastic rate matrix
which gives rise to a nonexplosive, irreducible, continuous-time Markov chain.

The transition matrices $\{R^n_k\}$ satisfy the following structural assumptions.

\begin{hypothesis}\label{H1}
There exist positive constants $m_0$, $N_0$, and $C_0$, such that
the following hold for all $x\in\sX^n$, $n\in\NN$, and
$k\in\cK$.
\begin{enumerate}
\item[\textup{(a)}]
\emph{Bounded jumps.}
It holds that $r^n_k (x,x+z) =0$ for $\abs{z}>m_0$.

\smallskip
\item[\textup{(b)}]
\emph{Finitely many jumps.}
The cardinality of the
set $\sZ^n_k(x) \df \{z\in\Rd\colon r^n_k (x,x+z)>0\}$
does not exceed $N_0$.

\smallskip
\item[\textup{(c)}]
\emph{Incremental affine growth.}
It holds that
\begin{equation*}
\babs{r^n_k (x,x+z) - r^n_k (x',x'+z)}
\,\le\, C_0\bigl(n^{\nicefrac{\alpha}{2}}+\abs{x-x'}\bigr)\,.
\end{equation*}

\smallskip
\item[\textup{(d)}]
There exists some distinguished element $x^n_*\in\Rd$ such that
\begin{equation*}
r^n_k (x,x+z) \,\le\, C_0 (n^{\nicefrac{\alpha}{2}}+\abs{x-x^n_*} + n)\,.
\end{equation*}

\smallskip
\end{enumerate}
\end{hypothesis}

\Cref{H1} is assumed throughout the paper without further mention.
We refer the readers to 
\cref{Ex3.1,Ex3.4} 
for examples of verification of the conditions in (c) and (d).

\begin{remark}
The element $x^n_*\in\fX^n$ in part (d) plays an important role
in the analysis. 
For queueing models, $x^n_*$ may be chosen as steady state of the `average' fluid, 
referred to solutions of \cref{EA2.2A} later. 
\end{remark}

Consider the stochastic rate matrix $S^n$ on $\fX^n\times\cK$
whose elements are defined by
\begin{equation*}
s^n\bigl((x,i),(y,j)\bigr) \,\df\,
r^n_i(x,y) + n^{\alpha} q_{ij}\,,\quad x,y\in\fX^n\,,\ i,j\in\cK\,.
\end{equation*}
This defines a nonexplosive, irreducible
Markov chain $(X^n,J^n)$, where $J^n$ is as described
in the preceding paragraph.

In order to simplify some algebraic expressions,
we often use the notation
$\Tilde{r}^n_k(x,z) = r^n_k(x,x+z)$.

\begin{definition}\label{D2.1}
Let  $\beta\df\max\{\nicefrac{1}{2},1 - \nicefrac{\alpha}{2}\}$ be fixed. 
With $x^n_*$ as in \cref{H1}\,(d), we define the scaled process
\begin{equation*}
\widehat{X}^n\,\df\, \frac{X^n - {x}^n_{*}}{n^{\beta}}\,.
\end{equation*}
The state space of $\widehat{X}^n$ is given by 
\begin{equation*}
\widehat{\fX}^n \,\df\, \{\Hat{x}^n(x) \colon x\in\fX^n\}\,,
\end{equation*}
where $\Hat{x} = \Hat{x}^n(x) \df n^{-\beta}(x - x^n_*)$ for $x\in\RR^d$.
\end{definition}

Naturally, $(\widehat{X}^n,J^n)$ is a Markov process, and its
extended generator is given by
\begin{equation}\label{E-HcLn}
\widehat\cL^n f(\Hat{x},k) \,=\, \cL^n_k f(\Hat{x},k) 
+ \cQ^n f(\Hat{x},k)\,,\quad (\Hat{x},k)\in\widehat{\fX}^n\times\cK\,,
\end{equation}
for $f\in C_b(\RR^d\times\cK)$, where
\begin{equation}\label{E-cL}
\begin{aligned}
\cL^n_kf(\Hat{x},k) &\,\df\, \sum_{z\in\sZ^n(x)} 
\Tilde{r}^n_k(n^{\beta}\Hat{x} + {x}^n_*,z)
\bigl(f(\Hat{x} + n^{-\beta}z,k) - f(\Hat{x},k)\bigr)\,, \\
\cQ^nf(\Hat{x},k) &\,\df\, \sum_{\ell\in\cK}n^{\alpha}q_{k\ell}
\bigl(f(\Hat{x},\ell) - f(\Hat{x},k)\bigr)
\,=\, \sum_{\ell\in\cK}n^{\alpha}q_{k\ell}f(\Hat{x},\ell)\,.
\end{aligned}
\end{equation}
It is clear that $\widehat\cL^n f$
and $\cL^n_kf$ are well defined
for $f\in C_b(\RR^d)$, by viewing  $f$
as a function on $\Rd\times\cK$ which is constant with respect
to its second argument.

\subsection{Exponential ergodicity}\label{S2.1}
In this subsection, we provide a sufficient condition 
for the joint process $(\widehat{X}^n,J^n)$ to be exponentially ergodic.
We refer the reader to \cite{MT-III} for
the relevant Foster--Lyapunov criteria.
We introduce the following operator, 
which corresponds to the generator of  the `averaged' process.

\begin{definition}\label{D2.2}
Let
\begin{equation*}
\Bar{r}^n(x,z) \,\df\, \sum_{k\in\cK}\pi_k \Tilde{r}^n_k(x,z)\,,
\end{equation*}
and
\begin{equation}\label{E-sZn}
\sZ^n \,\df\, \cup_{x\in\fX^n} \cup_{k\in\cK} \sZ^n_k(x)\,.
\end{equation}

We define
$\overline\cL^n\colon C_b(\RR^d\times\cK)\mapsto C_b(\RR^d\times\cK)$ by
\begin{equation}\label{E-Lavg}
\overline\cL^n f(\Hat{x},k) \,\df\, 
\sum_{z\in\sZ^n}\Bar{r}^n(n^{\beta}\Hat{x} + {x}^n_*,z)
\bigl(f(\Hat{x} + n^{-\beta}z,k) - f(\Hat{x},k)\bigr)\,,
\quad (\Hat{x},k)\in\widehat{\fX}^n\times\cK\,,
\end{equation}
and $f\in C_b(\RR^d\times\cK)$.
\end{definition}

In the following theorem, we show that if $\overline\cL^n$
satisfies a Foster--Lyapunov inequality
with a suitable Lyapunov function,
then the original joint process $(\widehat{X}^n,J^n)$ is exponential ergodic.
The proof is given in \cref{S4}.

A function $f\in C(\RR^d)$ is called norm-like 
if $f(x) \rightarrow\infty$ as $\abs{x}\rightarrow\infty$.

\begin{theorem}\label{T2.1}
Suppose that 
there exist a sequence of nonnegative norm-like functions 
$\{\Lyap^n\in C(\RR^d)\colon n\in\NN\}$, $n_0\in\NN$, 
and some positive constants $\varepsilon_0$, $C$, $\overline{C}_1$, $\overline{C}_2$,  
not depending on $n$ such that  
\begin{equation}\label{ET2.1A}
\begin{aligned}
(1 + \abs{x})\,
\babs{\Lyap^n(x + y) - \Lyap^n(x)} &\,\le\, C\abs{y}\bigl(1 + \Lyap^n(x)\bigr)\,, \\
\bigl(1 + \abs{x}^2\bigr)\,
\babs{\Lyap^n\bigl(x + y + z\bigr) - \Lyap^n(x+ y) -
\Lyap^n(x+ z) + \Lyap^n(x)} &\,\le\, C\abs{y}\abs{z}\bigl(1 + \Lyap^n(x)\bigr)\,,   
\end{aligned}
\end{equation}
for any $y,z\in B_0(\varepsilon_0)\setminus\{0\}$,
$x\in\RR^d$ and $n\in\NN$,
and 
\begin{equation}\label{ET2.1B}
\overline\cL^n \Lyap^n(\Hat{x}) \,\le\, \overline{C}_1 - \overline{C}_2\Lyap^n(\Hat{x})
\qquad \forall\,\Hat{x}\in{\widehat{\fX}^n}\,,\  \forall\, n > n_0\,.
\end{equation}
Then, there exist a function $\widehat{\Lyap}^n\in C(\RR^d\times\cK)$,
and positive constants $\widehat{C}_1$, $\widehat{C}_2$,
and $n_1\in\NN$, such that, for all $n\ge n_1$, we have
\begin{equation}\label{ET2.1C}
\frac{1}{2}\bigl(\Lyap^n(\Hat{x}) - 1\bigr) \,\le\, \widehat{\Lyap}^n(\Hat{x},k) \,\le\,
\frac{3}{2}\Lyap^n(\Hat{x}) +\frac{1}{2}
\qquad \forall\,(\Hat{x},k)\in\widehat{\fX}^n\times\cK\,,
\end{equation}
and
\begin{equation}\label{ET2.1D}
\widehat\cL^n \widehat{\Lyap}^n(\Hat{x},k) \,\le\, \widehat{C}_1 - \widehat{C}_2
\widehat{\Lyap}^n(\Hat{x},k) \qquad \forall\,(\Hat{x},k)\in\widehat{\fX}^n\times\cK\,,
\ \forall\, n>n_1\,.
\end{equation}
As a consequence, 
$(\widehat{X}^n,J^n)$ is exponentially ergodic for all $n> n_1$,
and its invariant probability distributions are tight.
\end{theorem}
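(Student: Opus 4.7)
Following a Khasminskii-type perturbation approach, I construct $\widehat{\Lyap}^n$ as a small $k$-dependent correction of $\Lyap^n$ chosen to cancel the fluctuations of $\cL^n_k\Lyap^n$ around its $\pi$-average $\overline\cL^n\Lyap^n$. Since $\Lyap^n$ does not depend on $k$ and $\pi Q=0$, the map $k\mapsto\overline\cL^n\Lyap^n(\hat{x})-\cL^n_k\Lyap^n(\hat{x})$ has $\pi$-mean zero, and irreducibility of $Q$ yields a solution $\{\varphi^n_k(\hat{x})\}_{k\in\cK}$ of the Poisson equation
\begin{equation*}
\sum_{\ell\in\cK} q_{k\ell}\,\varphi^n_\ell(\hat{x}) \;=\; \overline\cL^n\Lyap^n(\hat{x}) - \cL^n_k\Lyap^n(\hat{x}),
\end{equation*}
normalized by $\sum_k\pi_k\varphi^n_k(\hat{x})=0$, with the uniform bound $|\varphi^n_k(\hat{x})|\le C_Q\max_\ell|\cL^n_\ell\Lyap^n(\hat{x})|$ for a constant $C_Q$ depending only on $Q$. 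Setting $\widehat{\Lyap}^n(\hat{x},k)\df\Lyap^n(\hat{x})+n^{-\alpha}\varphi^n_k(\hat{x})$, and using $\cQ^n\Lyap^n=0$ together with the defining Poisson equation, the generator decomposition \cref{E-HcLn} collapses to the identity $\widehat\cL^n\widehat{\Lyap}^n(\hat{x},k)=\overline\cL^n\Lyap^n(\hat{x})+n^{-\alpha}\cL^n_k\varphi^n_k(\hat{x})$.

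For the sandwich \cref{ET2.1C}, I combine the first inequality in \cref{ET2.1A}, applied to increments $n^{-\beta}z$ with $|z|\le m_0$, with the rate bound in \cref{H1}\,(d). The product of the rate magnitude $O(n^{\alpha/2}+n^\beta|\hat{x}|+n)$ and the Lyapunov first difference $O(n^{-\beta}(1+\Lyap^n(\hat{x}))/(1+|\hat{x}|))$, summed over the at most $N_0$ jumps, gives $|\cL^n_m\Lyap^n(\hat{x})|\le C_1 n^{\alpha/2}(1+\Lyap^n(\hat{x}))$; the exponent $\alpha/2$ is precisely what the choice $\beta=\max\{1/2,1-\alpha/2\}$ produces. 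Consequently $|n^{-\alpha}\varphi^n_k(\hat{x})|\le C_1C_Q n^{-\alpha/2}(1+\Lyap^n(\hat{x}))\le\tfrac{1}{2}(1+\Lyap^n(\hat{x}))$ for all $n$ beyond some threshold $n_1$, giving \cref{ET2.1C}.

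The core of the argument is bounding the residual $n^{-\alpha}\cL^n_k\varphi^n_k(\hat{x})$. Since $\varphi^n_k$ is a bounded linear combination of $\{\cL^n_m\Lyap^n\}_{m\in\cK}$, this reduces to estimating the double application $\cL^n_k\cL^n_m\Lyap^n(\hat{x})$; I split each inner summand, via the telescoping identity for $\cL^n_m\Lyap^n(\hat{x}+n^{-\beta}z)-\cL^n_m\Lyap^n(\hat{x})$, into a ``rate-increment'' piece controlled by \cref{H1}\,(c) (which gives $O(n^{\alpha/2})$) times the first inequality of \cref{ET2.1A}, and a ``Lyapunov mixed-difference'' piece controlled by the second inequality of \cref{ET2.1A}. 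Multiplying by the outer rate $\tilde r^n_k$ (using \cref{H1}\,(d) once more) and summing over the $O(1)$ jumps, each piece contributes at most $C_2 n^{\alpha}(1+\Lyap^n(\hat{x}))/(1+|\hat{x}|)$, so $n^{-\alpha}|\cL^n_k\varphi^n_k(\hat{x})|\le C_3(1+\Lyap^n(\hat{x}))/(1+|\hat{x}|)$. Combining with \cref{ET2.1B} and splitting $\{|\hat{x}|\le M\}$ versus $\{|\hat{x}|>M\}$ with $M$ chosen so that $C_3/M<\overline C_2/2$, the sandwich \cref{ET2.1C} allows rewriting the bound in terms of $\widehat{\Lyap}^n$ and delivers \cref{ET2.1D} with $\widehat C_2=\overline C_2/4$. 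The standard Foster--Lyapunov criterion for continuous-time Markov chains (Meyn--Tweedie) then yields exponential ergodicity of $(\widehat{X}^n,J^n)$ for $n>n_1$ and tightness of the invariant distributions. The main obstacle is exactly this exponent bookkeeping: the three competing rate scales in \cref{H1}\,(d) interact with the scales $n^{-\beta}$ and $n^{-2\beta}$ coming from the first and mixed-second Lyapunov differences in \cref{ET2.1A}, and the specific value $\beta=\max\{1/2,1-\alpha/2\}$ is what forces the worst term down to order exactly $n^\alpha$, leaving the compensating $(1+|\hat{x}|)^{-1}$ factor that is absorbed into the drift $-\overline C_2\Lyap^n$ for $|\hat{x}|$ large.
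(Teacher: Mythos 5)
Your construction is essentially the same as the paper's. The paper defines $\widetilde{\Lyap}^n(\Hat{x},k) = n^{-\alpha}\sum_{\ell}\cT_{k\ell}\,\Breve\cL^n_\ell\Lyap^n(\Hat{x})$ with $\cT$ a left pseudo-inverse of $Q$ and $\Breve\cL^n_\ell = \overline\cL^n - \cL^n_\ell$, sets $\widehat{\Lyap}^n = \Lyap^n + \widetilde{\Lyap}^n$, and derives the identity $\widehat\cL^n\widehat{\Lyap}^n = \overline\cL^n\Lyap^n + \cL^n_k\widetilde{\Lyap}^n$; your $n^{-\alpha}\varphi^n_k$ with $Q\varphi = \Breve\cL^n\Lyap^n$ and $\pi$-mean-zero normalization is exactly this object. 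The two-piece decomposition of $\cL^n_k\widetilde{\Lyap}^n$ (rate-increment controlled by \cref{H1}\,(c) against the first inequality of \cref{ET2.1A}, plus rate magnitude against the mixed second difference of \cref{ET2.1A}) matches the paper's estimate in the step establishing its bound for $G^n_k(\Hat{x}+n^{-\beta}h,z) - G^n_k(\Hat{x},z)$, and the exponent bookkeeping using $\beta = \max\{\nicefrac12, 1-\nicefrac{\alpha}{2}\}$ is the same. One small imprecision: your stated residual bound $C_3(1+\Lyap^n(\Hat{x}))/(1+\abs{\Hat{x}})$ omits contributions of the form $\order(n^{-c})(1+\Lyap^n(\Hat{x}))$ (arising, e.g., from the $n^\beta\abs{\Hat{x}}$ piece of the outer rate, where the $(1+\abs{\Hat{x}})^{-1}$ factor is consumed in producing $\order(1)$ rather than $\order(1/(1+\abs{\Hat{x}}))$); these terms vanish as $n\to\infty$ rather than decaying in $\abs{\Hat{x}}$, so they are absorbed by taking $n$ large, which the paper handles by stating the bound in the form $C_\circ(\epsilon) + \epsilon\Lyap^n$. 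With that cosmetic repair your argument coincides with the paper's.
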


\begin{remark}
It follows from the proof of \cref{T2.1} that
$\widehat{C}_2$ can be selected arbitrarily close to $\overline{C}_2$,
so the rates of convergence of the `averaged' system and the Markov modulated
one become asymptotically close.
\end{remark}

\begin{remark}
A sufficient condition for a function $\Lyap^n\in C^{2,1}(\Rd)$ to satisfy
 \cref{ET2.1A} is
\begin{equation}\label{ER2.1A}
\abs{\grad \Lyap^n(x)} \,\le\, c\frac{1+\Lyap^n(x)}{1+\abs{x}}\,,
\quad\text{and}\quad
\abs{\grad^2 \Lyap^n(x)} + 
\bigl[\Lyap^n\bigr]_{2,1;B_{\epsilon}(x)}
\,\le\, c\frac{1+\Lyap^n(x)}{1+\abs{x}^2}\quad\forall\,x\in\Rd\,,
\end{equation}
for some fixed positive constants $\epsilon$ and $c$.
\end{remark}

In the next corollary, we relax the incremental growth hypothesis
in \cref{H1} (c). Its proof is contained in \cref{S4}. 
In \cref{Ex3.3}, we show that this result can be applied in the  
study of exponential ergodicity for Markov-modulated $M/M/n +M$ queues.


We replace \cref{H1}\,(c) by the following weaker assumption.

\begin{assumption}\label{A2.1}
Suppose that \cref{H1}\,(a), (b) and (d) are satisfied,
and $\Tilde r^n_k$ can be decomposed into 
\begin{equation*}
\Tilde{r}^n_k(x,z) \,=\, \phi^n_k(x,z) + \psi^n_k(x,z)\,, 
\quad x\in\fX^n\,, \quad z\in\sZ^n_k(x)\,,
\end{equation*}
where $\phi^n_k(x,z)$ and $\psi^n_k(x,z)$, $k\in\cK$, 
are locally bounded functions on $\fX^n\times\sZ^n$.
In addition, 
using without loss of generality the same constant, 
there exist $\delta_1,\delta_2 \in[0,1]$
such that
\begin{equation}\label{A2.1B}
\abs{\psi^n_k(x,z) -  \psi^n_k(y,z)}
\,\le\, C_0\bigl(n^{\nicefrac{\alpha}{2}} + \abs{x - y}^{\delta_1}\bigr)\
\qquad \forall\,k\in\cK\,, \ \forall\,x,y\in\fX^n\,, \ \forall\,z\in\sZ^n\,,
\end{equation}
and
\begin{equation}\label{A2.1C}
\abs{\psi^n_k(x,z)} \,\le\, C_0\bigl(n^{\nicefrac{\alpha}{2}} + \abs{x - x^n_*}^{\delta_2}
+ n \bigr) \qquad \forall\,k\in\cK\,,
\ \forall\,(x,z)\in\fX^n\times\sZ^n\,,
\end{equation}
with $x^n_*\in\RR^d$ as in \cref{H1} (d), and for $n\in\NN$.
\end{assumption}

\begin{corollary}\label{C2.1}
Grant \cref{A2.1}.
Let
$\cG^n_k\colon C_b(\RR^d\times\cK)\mapsto C_b(\RR^d\times\cK)$
be defined by
\begin{equation}\label{EC2.1A}
\cG^n_k f(\Hat{x},k) \,\df\, 
\sum_{z\in\sZ^n}\bigl(\phi^n_k(n^{\beta}\Hat{x} + {x}^n_*,z)  
+ \Bar{\psi}^n(n^{\beta}\Hat{x} + {x}^n_*,z)\bigr)
\bigl(f(\Hat{x} + n^{-\beta}z,k) - f(\Hat{x},k)\bigr)
\end{equation}
for $(\Hat{x},k)\in\widehat{\fX}^n\times\cK$ and $f\in C_b(\RR^d\times\cK)$, and
with $x^n_*$ as in \cref{A2.1},
where $\Bar{\psi}^n(x,z) \df \sum_{k\in\cK}\pi_k\psi^n_k(x,z)$.
Suppose that \cref{ET2.1A} holds with the second inequality replaced by
$$
\bigl(1 + \abs{x}^{1+\delta_2}\bigr)\,
\babs{\Lyap^n\bigl(x + y + z\bigr) - \Lyap^n(x+ y) -
\Lyap^n(x+ z) + \Lyap^n(x)} \,\le\, C\abs{y}\abs{z}\bigl(1 + \Lyap^n(x)\bigr)\,, 
$$
where $\delta_2$ is as in \cref{A2.1}, and 
there exist $n_2\in\NN$ and some positive constants $C_1$ and $C_2$ such that
\begin{equation}\label{EC2.1B}
\cG^n_k \Lyap^n(\Hat{x}) \,\le\, C_1 - C_2\Lyap^n(\Hat{x})
\qquad \forall\,(\Hat{x},k)\in\widehat{\fX}^n\times\cK\,, \  \forall\, n > n_2\,.
\end{equation}
Then, the results in \cref{ET2.1C,ET2.1D} hold.
\end{corollary}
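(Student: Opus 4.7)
The plan is to follow the Khasminskii-type perturbation scheme used in the proof of \cref{T2.1}, but to average only the $\psi$-component of $\Tilde{r}^n_k$, since $\phi^n_k$ lacks the affine-growth regularity needed for the Poisson-equation step. Accordingly, I would split $\cL^n_k = \cL^{n,\phi}_k + \cL^{n,\psi}_k$ along the decomposition in \cref{A2.1}, so that by construction
$$\cG^n_k \,=\, \cL^{n,\phi}_k + \overline{\cL^{n,\psi}}\,, \qquad \overline{\cL^{n,\psi}}f(\Hat{x}) \,\df\, \sum_{z\in\sZ^n}\Bar{\psi}^n(n^{\beta}\Hat{x} + x^n_*, z)\bigl(f(\Hat{x} + n^{-\beta}z) - f(\Hat{x})\bigr)\,,$$
and therefore $(\cL^n_k - \cG^n_k)\Lyap^n = (\cL^{n,\psi}_k - \overline{\cL^{n,\psi}})\Lyap^n$ has zero $\pi$-mean in $k$ for each $\Hat{x}$.

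Because $Q$ is irreducible with invariant law $\pi$, the Fredholm alternative yields a unique function $\Psi^n(\Hat{x}, \cdot)\colon\cK\to\RR$, normalized by $\sum_k\pi_k\Psi^n(\Hat{x}, k) = 0$, solving
$$\sum_{\ell\in\cK}q_{k\ell}\,\Psi^n(\Hat{x},\ell) \,=\, -\bigl(\cL^{n,\psi}_k - \overline{\cL^{n,\psi}}\bigr)\Lyap^n(\Hat{x})\,, \qquad k\in\cK\,.$$
I would set $\widehat{\Lyap}^n(\Hat{x}, k) \df \Lyap^n(\Hat{x}) + n^{-\alpha}\Psi^n(\Hat{x}, k)$. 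Applying $\cQ^n$ to $\widehat{\Lyap}^n$ cancels the prefactor $n^{-\alpha}$ against the $n^{\alpha}$ built into $\cQ^n$, and a direct computation yields the key identity
$$\widehat\cL^n\widehat{\Lyap}^n(\Hat{x}, k) \,=\, \cG^n_k\Lyap^n(\Hat{x}) + n^{-\alpha}\cL^n_k\Psi^n(\Hat{x}, k)\,.$$
The sandwich bound \cref{ET2.1C} then follows once one checks that $n^{-\alpha}\abs{\Psi^n(\Hat{x}, k)}$ is $\sorder(1+\Lyap^n(\Hat{x}))$, which is immediate from the first inequality of the modified \cref{ET2.1A}, the growth bound \cref{A2.1C} on $\psi^n_k$, and the uniform bound on a pseudoinverse of $Q$.

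To obtain \cref{ET2.1D}, the residual $R^n(\Hat{x}, k) \df n^{-\alpha}\cL^n_k\Psi^n(\Hat{x}, k)$ must be absorbed into the Foster--Lyapunov estimate \cref{EC2.1B}. Since $\Psi^n(\Hat{x}, k)$ is a bounded linear combination in $k$ of the functions $(\cL^{n,\psi}_\ell - \overline{\cL^{n,\psi}})\Lyap^n(\Hat{x})$, unfolding $\cL^n_k$ acting on $\Psi^n$ produces a double sum over $y\in\sZ^n_k(n^{\beta}\Hat{x}+x^n_*)$ and $z\in\sZ^n$ whose summand contains the mixed second difference
$$\Lyap^n\bigl(\Hat{x} + n^{-\beta}y + n^{-\beta}z\bigr) - \Lyap^n\bigl(\Hat{x} + n^{-\beta}y\bigr) - \Lyap^n\bigl(\Hat{x} + n^{-\beta}z\bigr) + \Lyap^n(\Hat{x})\,,$$
multiplied by $\Tilde{r}^n_k(\cdot,y)$ and by a difference $\psi^n_\ell(\cdot,z) - \psi^n_{\ell'}(\cdot,z)$. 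The modified second condition bounds this mixed difference by $C n^{-2\beta}\abs{y}\abs{z}(1+\Lyap^n(\Hat{x}))/(1+\abs{\Hat{x}}^{1+\delta_2})$, and this denominator is calibrated precisely so that when paired with the full growth $\abs{\Tilde{r}^n_k}=\order(n^{\nicefrac{\alpha}{2}}+n+n^{\beta}\abs{\Hat{x}})$ from \cref{H1}\,(d) and the weaker $\delta_2$-growth of $\psi^n_\ell$ from \cref{A2.1C}, the resulting product stays controlled.

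The main obstacle is the bookkeeping of these scalings across the two regimes $\alpha<1$ and $\alpha\ge 1$ that determine $\beta$. A careful accounting shows that the combination of the $n^{-\alpha}$ prefactor, the $n^{-2\beta}$ from the second difference, and the asymmetric growth of $\Tilde{r}^n_k$ (linear in $\abs{\Hat{x}}$) versus $\psi^n_\ell$ (of order $\abs{\Hat{x}}^{\delta_2}$) yields a bound on $R^n(\Hat{x}, k)$ of the form $\varepsilon\Lyap^n(\Hat{x}) + C_\varepsilon$ with $\varepsilon\to 0$ as $n\to\infty$. Plugging this back into the identity above and using \cref{EC2.1B} then delivers \cref{ET2.1D} with $\widehat{C}_2$ arbitrarily close to $C_2$. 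Exponential ergodicity and tightness of the invariant distributions of $(\widehat{X}^n, J^n)$ then follow from the standard Foster--Lyapunov argument, exactly as in the concluding step of the proof of \cref{T2.1}.
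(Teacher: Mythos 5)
Your proposal is correct and follows essentially the same route as the paper's own proof. The paper sets $\Breve\cG^n_k \df \cG^n_k - \cL^n_k$, observes this has zero $\pi$-mean because the $\phi$-parts cancel, solves $\cQ^n\widetilde{\Lyap}^n = \Breve\cG^n_k\Lyap^n$ via the pseudoinverse $\cT$ of $Q$, and sets $\widehat{\Lyap}^n = \Lyap^n + \widetilde{\Lyap}^n$ to obtain the identity $\widehat\cL^n\widehat{\Lyap}^n = \cG^n_k\Lyap^n + \cL^n_k\widetilde{\Lyap}^n$; the residual is then estimated as in Theorem~\ref{T2.1} with $\Breve r^n$ replaced by $\Breve\psi^n$ and the Hypothesis~\ref{H1}(c)/(d) bounds replaced by \cref{A2.1B,A2.1C}. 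Your $\Psi^n$ is $n^\alpha\widetilde{\Lyap}^n$ and your decomposition $\cG^n_k = \cL^{n,\phi}_k + \overline{\cL^{n,\psi}}$ makes the cancellation of the $\phi$-part explicit, but the perturbed Lyapunov function, the Poisson-equation step, the resulting generator identity, and the observation that the modified second-difference condition with denominator $1+\abs{x}^{1+\delta_2}$ is calibrated to absorb the asymmetric growths of $\Tilde r^n_k$ (linear) and $\psi^n_\ell$ (order $\delta_2$) all match what the paper does. The only minor imprecision is your remark that the residual bound has coefficient $\varepsilon\to0$ as $n\to\infty$; as in the paper's \cref{PT2.1J}, some contributions are only $O(1)$ in $n$ (with the $n$-independent part carrying a factor $(1+\abs{\Hat x})^{-1}$), and one absorbs them via a Young-type inequality into $\epsilon\Lyap^n + C_\epsilon$ uniformly over $n$, rather than by letting the prefactor vanish — but this does not affect the validity of the argument.
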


\begin{remark}\label{R2.3}
If we replace \cref{H1}\,(c) and (d) by
\begin{equation}\label{ER2.3A}
\babs{r^n_k (x,x+z) - r^n_k (x',x'+z)}
\,\le\, C_0\bigl(1+\abs{x-x'}\wedge n\bigr)\,,
\end{equation}
and
\begin{equation}\label{ER2.3B}
r^n_k (x^n_*,x^n_*+z)
\,\le\, C_0 n\,,
\end{equation}
respectively,
and \cref{ET2.1A} by
\begin{equation}\label{ER2.3C}
\begin{aligned}
\babs{\Lyap^n(x + y) - \Lyap^n(x)} &\,\le\, C\abs{y}\bigl(1 + \Lyap^n(x)\bigr)\,, \\
\babs{\Lyap^n\bigl(x + y + z\bigr) - \Lyap^n(x+ y) -
\Lyap^n(x+ z) + \Lyap^n(x)} &\,\le\, C\abs{y}\abs{z}\bigl(1 + \Lyap^n(x)\bigr)\,,   
\end{aligned}
\end{equation}
then, provided $\beta$ and $\alpha$ satisfy $2\beta+\alpha>2$,
the conclusion of \cref{T2.1} still holds.
This can be easily seen from the proof in \cref{S4}.
Note that \cref{ER2.3C} is satisfied for exponential functions.

The transition rates of multiclass
$M/M/n$ queues, that is,
the model in \cref{Ex3.3} with no abandonment ($\gamma_i(k)\equiv0$), satisfy
\cref{ER2.3A,ER2.3B}. 
Uniform exponential ergodicity of this model (with spare capacity, or equivalently, positive safety staffing) is established
in \cite{AHP-18} using exponential Lyapunov functions.
Thus, we may use exponential Lyapunov functions in \cref{ET2.1B},
and take advantage of the results in \cite{AHP-18} to establish
exponential ergodicity of Markov-modulated multiclass $M/M/n$ queues with positive safety staffing
using the Lyapunov functions in \cite{AHP-18}.
We leave it to the reader to verify that for $\alpha\ge1$, we can
in fact establish uniform exponential ergodicity over all work-conserving
scheduling policies.
For $\alpha<1$ the discontinuity allowed in the policies need to be
restricted.

Extending this to the classes of multiclass multi-pool models
studied in \cite{HAP-19} is also possible.
\end{remark}

\subsection{Steady-state approximations}\label{S2.2}

Here, we use a function
$\xi^n_z(x,k)$ for $(x,z)\in\Rd\times\Rd$ and $k\in\cK$
which interpolates the transition rates
in the sense that
\begin{equation*}
\xi^n_{z}(x,k) \,=\, r^n_k(x,x+z) \quad \text{if } x,x+z\in\fX^n\,.
\end{equation*}
Recall the definition of $\sZ^n$ in \cref{E-sZn}.
It is clear that for $z\notin\sZ^n$ we may let $\xi^n_z\equiv0$.
Thus
\begin{equation*}
\sZ^n \,=\, \bigl\{z\in\Rd\colon \exists\, x, k \text{\ such that\ }
\xi^n_{z}(x,k)>0\}\,.
\end{equation*}
This of course also implies that
\begin{equation}\label{E-m0}
\xi^n_{z}(x,k) \,=\, 0 \quad\text{if\ } \abs{z}> m_0
\end{equation}
by \cref{H1}\,(a).

We let $\cI\df\{1,\dotsc,d\}$,
and define
\begin{equation}\label{E-Lambda}
\begin{aligned}
\Xi^n(x,k) &\,\df\, \sum_{z\in\sZ^n} z\,\xi^n_{z}(x,k)\,,\\
\Gamma^n_{ij}(x,k) &\,\df\, \sum_{z\in\sZ^n}z_iz_j\xi^n_z(x,k)\,,
\quad  i,j\in\cI\,,
\end{aligned}
\end{equation}
for $(x,k)\in \RR^d\times\cK$.

We impose the following structural assumptions on the function
$\xi^n$.

\begin{assumption}\label{A2.2}
The following hold.
\begin{enumerate}
\item[\textup{(i)}]
The cardinality of the
set $\{z\in\Rd\colon \xi^n_{z}(x,k)>0\}$
does not exceed $\widetilde{N}_0$.

\smallskip
\item[\textup{(ii)}]
For each $n\in\NN$, there exists 
$x^n_*\in\Rd$ satisfying
\begin{equation}\label{EA2.2A}
\sum_{k\in\cK}\pi_k\Xi^n({x}^n_*,k)\,=\, 0\,.
\end{equation}

\smallskip
\item[\textup{(iii)}] 
The function
$\xi^n_z$ is uniformly Lipschitz continuous in its first argument, 
that is,
there exists some positive constant $\widetilde{C}$ such that 
\begin{equation}\label{EA2.2B}
\abs{\xi^n_z(x,k) -  \xi^n_z(y,k)}
\,\le\, \widetilde{C}\abs{x - y} \qquad \forall\,k\in\cK\,,
\ \forall\,x,y\in\RR^d\,, \ \forall\,z\in\sZ^n\,, 
\end{equation}
for all $n\in\NN$. In addition,
using without loss of generality the same constant, we assume that
\begin{equation}\label{EA2.2C}
\max_{z\in\Rd}\,\xi^n_z(x^n_*, k)\,\le\,\widetilde{C} n
\qquad  \forall\, k\in\cK\,,\ \forall\,n\in\NN\,.
\end{equation}

\smallskip
\item[\textup{(iv)}]
The matrix $\Gamma^n(x^n_*,k)$ is positive definite, and
\begin{equation}\label{EA2.2D}
\frac{1}{n}\, \Gamma^n(x^n_*, k) \,\xrightarrow[n\to\infty]{}\, \Bar{\Gamma}(k)\,,
\end{equation}
where $\Bar{\Gamma}(k)$ is also a positive definite $d\times d$ matrix, for
all $k\in\cK$.

\smallskip
\end{enumerate}
\end{assumption}

We note here that the nondegeneracy hypothesis
in \cref{A2.2}\,(iv) is used in \cref{L5.3} to derive gradient estimates of 
the solution of a Poisson equation.

\begin{remark}
\Cref{EA2.2B} is of course much stronger than \cref{H1}\,(c).
This is needed for the results in this section which rely on
certain Schauder estimates for solutions of the Poisson equation for the
generator of an approximating diffusion equation. 
\end{remark}

Let $\{A^n_z \colon z\in \sZ^n\}$ be a family
of independent unit rate Poisson processes,
and $\Tilde{A}^n_{z}(t) \df A^n_{z}(t) - t$.
Then, the $d$-dimensional process $X^n(t)$ is governed by the equation 
\begin{equation}\label{S2A}
\begin{aligned}
X^n(t) &\,=\, X^n(0) + 
\sum_{z\in\sZ^n} z\, A^n_{z}
\biggl(\int_0^t \xi^n_{z}\bigl(X^n(s),J^n(s)\bigr)\,\D{s}\biggr) 
\nonumber\\ &\,=\, X^n(0) + M^n(t) +
\int_0^t \Xi^n\bigl(X^n(s),J^n(s)\bigr)\,\D{s} \,,
\end{aligned}
\end{equation}
where
\begin{equation*}
M^n(t)\,\df\, 
\sum_{z\in\sZ^n}z \,\Tilde{A}^n_{z}
\biggl(\int_0^t\xi^n_{z}\bigl(X^n(s),J^n(s)\bigr)\,\D{s}\biggr)\,.
\end{equation*}
Note that $M^n(t)$ is a local martingale  
with respect to the filtration
$$
\mathcal{F}^n_t \,\df\, \upsigma\left\{X^n(0),J^n(s),
\Tilde{A}^n_{z}\biggl(\int_0^t\xi^n_{z}\bigl(X^n(s),J^n(s)\bigr)\,\D{s}\biggr),  
\int_0^t\xi^n_{z}\bigl(X^n(s),J^n(s)\bigr)\,\D{s}\colon z\in\sZ^n, s\le t\right\}
\,.
$$
The locally predictable quadratic variation of $M^n$ satisfies 
$$
\langle M^n  \rangle (t) \,=\, \int_0^t \Gamma^n\bigl(X^n(s),J^n(s)\bigr)\,\D{s}\,,
\quad t\ge0\,,
$$
where the function $\Gamma^n = [\Gamma^n_{ij}] 
\colon \RR^d\times\cK \mapsto \RR^{d\times d}$ is given
in \cref{E-Lambda}.

By \cref{EA2.2B}, it is evident that given 
$x^n(0) \in\RR^d$, 
there exists a unique solution $x^n(t)$ satisfying
\begin{equation*}
x^n(t) \,=\, x^n(0) + \sum_{k\in\cK}\pi_k\int_0^t
\Xi^n(x^n(s),k)\,\D{s} \,.
\end{equation*}
We refer to this as the $n^{\text{th}}$ `averaged' fluid model.


In this section, the scaled process is defined as in \cref{D2.1}, with the
exception that $x^n_*\in\Rd$ is specified in \cref{A2.2}.
Note that in the extended generator in \cref{E-HcLn,E-cL} we may replace
$\Tilde{r}^n_k(n^{\beta}\Hat{x} + {x}^n_*,z)$ by $\xi^n_z(n^{\beta}\Hat{x} + {x}^n_*, k)$.
It is evident from \cref{S2A}, that $\widehat{X}^n$ satisfies
\begin{equation}\label{E-hatX}
\widehat{X}^n(t) \,=\, \widehat{X}^n(0) + \widehat{M}^n(t)
+ \int_0^t\widehat{\Xi}^n\bigl(\widehat{X}^n(s),J^n(s)\bigr)\,\D{s}\,,
\end{equation}
where
\begin{equation}\label{E-hatXi}
\widehat{M}^n\,\df\, \frac{M^n}{n^{\beta}}\,,
\qquad \text{and} \qquad
\widehat{\Xi}^n(\Hat{x},k)\,\df\, 
\frac{\Xi^n(n^{\beta}\Hat{x} + {x}^n_{*},k)}{n^{\beta}}\,,
\quad (\Hat{x},k)\in\RR^d\times\cK\,.
\end{equation}
The locally predictable quadratic variation of $\widehat{M}^n$ is given by
$$
\langle \widehat{M}^n \rangle (t) \,=\, \int_0^{t}
\Bar{\Gamma}^n\bigl(\widehat{X}^n(s),J^n(s)\bigr)\,\D{s}\,,\quad t\ge0\,,
$$
with
\begin{equation}\label{E-bargamma}
\Bar{\Gamma}^n(\Hat{x},k) \,\df\, \frac{1}{n^{2\beta}}\,
\Gamma^n(n^{\beta}\Hat{x} + {x}^n_*,k)\,,
\qquad (\Hat{x},k)\in\RR^d\times\cK\,.
\end{equation}

We next introduce a sequence of processes that approximate $\widehat{X}^n$.
Let $\widehat{Y}^n$ be the strong solution to the It\^o $d$-dimensional stochastic
differential equation (SDE)
\begin{equation}\label{E-sde}
\D \widehat{Y}^n(t) \,=\, \Bar{b}^n\bigl(\widehat{Y}^n(t)\bigr)\,\D{t} 
+ \sigma^n \D W(t)\,, 
\end{equation}
with $\widehat{Y}^n(0) = y_0$,  
where $W(t)$ is a $d$-dimensional standard Brownian motion,
$$\Bar{b}^n_i(\Hat{y})\df \sum_{k\in\cK}\pi_k\widehat{\Xi}^n_i(\Hat{y},k)\,,
\qquad \,\Hat{y}\in\RR^d\,, \ i\in\cI\,,
$$
with $\widehat{\Xi}^n$ defined in \cref{E-hatXi}.
The diffusion matrix $\sigma^n$ is characterized as follows.
Let $\Upsilon = (\Pi - Q)^{-1} - \Pi$ denote the deviation matrix
corresponding to the transition rate matrix $Q$ \cite{CD02}.
Let $\Theta^n = [\theta^n_{ij}]$ be defined by
\begin{equation}\label{ES2.2B}
\theta^n_{ij}\,\df\, 2\sum_{\ell\in\cK}\sum_{k\in\cK} 
\frac{\Xi^n_i({x}^n_*,k)\Xi^n_j({x}^n_*,\ell)}{n^{\alpha + 2\beta}}
\pi_k\Upsilon_{k\ell} \,,\quad i,j\in\cI\,,
\end{equation}
and
$$\Bar{a}^n(x) \,=\, [\Bar{a}^n_{ij}](x) 
\,\df\, \sum_{k\in\cK}\pi_k\Bar{\Gamma}^n(x,k)\,, \quad x\in\RR^d\,.$$
Then, $\sigma^n$ satisfies
\begin{equation}\label{E-Sigma}
\Sigma^{n} \df (\sigma^n)\transp\sigma^n\,=\, \Bar{a}^n(0) + \Theta^n\,.
\end{equation}


The generator of $\widehat{Y}^n$ denoted by $\sA^n$ is given by
\begin{equation}\label{E-sAn}
\sA^n f(x) \,=\, \sum_{i\in\cI}\Bar{b}^n_i(x)\,\partial_i f(x) +
\frac{1}{2}\sum_{i,j\in\cI}\Sigma^{n}_{ij}\,\partial_{ij}f(x)\,,
\quad f\in C^2(\RR^d)\,.
\end{equation}

We borrow the following definitions from \cite{Gurvich14}.
We say that a function $f\in C^2(\RR^d)$ is sub-exponential
if $f \ge 1$ and there exists some positive constant $c$
such that
$$
\abs{\grad f(x)} + \babs{\grad^2 f(x)} \,\le\, c\,\E^{c\abs{x}} \qquad
\forall\,x\in\RR^d\,,
$$
and
$$
\sup_{\{z\colon \abs{z}\le1\}}\frac{f(x+z)}{f(x)} \,\le\, c \qquad
\forall\,x\in\RR^d\,.
$$ 
We also let $\sB_x$ denote the open ball around $x\in\Rd$ of
radius $(1+\abs{x})^{-1}$, and define
$$
\norm{f}_{C^{0,1}(\sB_x)} \,\df\, \sup_{y\in\sB_x}\abs{f(x)}
+ \sup_{y,z\in\sB_x}\frac{\abs{f(y)-f(z)}}{\abs{y-z}}\,,\quad f\in C^{0,1}(\Rd)\,.
$$

The following assumption concerning the ergodic properties
of $\widehat{Y}^n$ 
plays a crucial role in the proofs for steady-state approximations.

\begin{assumption}\label{A2.3}
There exist a sub-exponential norm-like function $\sV\in C^2(\RR^d)$, 
a positive constant $\kappa$, and an open ball $\sB$ such that 
$$
\sA^n \sV(x) \,\le\, \Ind_{\sB}(x) - \kappa\sV(x) \qquad 
\forall\, x\in\RR^d\,,\ \forall\,n\in\NN\,.
$$
\end{assumption}

We continue with the main result of this section.
Its proof is given in \cref{S5}.
Let $\nu^n\in\cP(\RR^d)$ denote the steady-state distribution of $\widehat{Y}^n$.

\begin{theorem}\label{T2.2}
Grant  \cref{A2.2,A2.3}. Assume that $(\widehat{X}^n,J^n)$ is ergodic,
and its steady-state distribution $\uppi^n\in\cP(\RR^d\times\cK)$ satisfies
\begin{equation}\label{ET2.2A}
\limsup_{n\rightarrow\infty}\int_{\RR^d\times\cK}
\sV(\Hat{x})(1 + \abs{\Hat{x}})^5\uppi^n(\D{\Hat{x}},\D{k})\,<\,\infty\,.
\end{equation}
Then, for any $f \colon \RR^d \mapsto \RR$ such that
$\norm{f}_{C^{0,1}(\sB_x)}\le \sV(x)$, and $\alpha >0$, we have
\begin{equation}\label{ET2.2B}
\abs{\uppi^n(f) - \nu^n(f)} \,=\, 
\order\biggl(\frac{1}{n^{\nicefrac{\alpha}{2}\wedge \nicefrac{1}{2} }}\biggr)\,.
\end{equation}
\end{theorem}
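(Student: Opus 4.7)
The strategy I would follow is a Poisson-equation method with a regime corrector. Let $u^n\in C^2(\RR^d)$ solve
\begin{equation*}
\sA^n u^n \,=\, f - \nu^n(f)\quad\text{on } \RR^d.
\end{equation*}
Under \cref{A2.3} and the sub-exponential bound imposed on $f$, the standard Schauder/elliptic machinery used in \cite{Gurvich14} should yield, uniformly in $n$,
\begin{equation*}
\abs{\grad u^n(x)} + \babs{\grad^2 u^n(x)} + \bigl[u^n\bigr]_{2,1;\sB_x} \,\le\, C\,\sV(x)(1+\abs{x})^q
\end{equation*}
for some fixed $q\in\NN$ and $C$ independent of $n$; this is the role played by the gradient estimates of \cref{L5.3} announced in the introduction. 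Integrating against the stationary measure $\uppi^n$ then gives
\begin{equation*}
\uppi^n(f) - \nu^n(f) \,=\, \uppi^n(\sA^n u^n)\,,
\end{equation*}
so the task reduces to estimating $\uppi^n(\sA^n u^n - \widehat\cL^n v^n)$ for a carefully chosen test function $v^n$ for which $\uppi^n(\widehat\cL^n v^n)=0$.

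Viewing $u^n$ as a function on $\RR^d\times\cK$ that is constant in $k$, Taylor expansion at scale $n^{-\beta}$ produces
\begin{equation*}
\widehat\cL^n u^n(\hat x,k) \,=\, \widehat\Xi^n(\hat x,k)\cdot\grad u^n(\hat x) + \tfrac12\,\Bar\Gamma^n(\hat x,k)\!:\!\grad^2 u^n(\hat x) + R^n_1(\hat x,k)\,,
\end{equation*}
where $R^n_1$ is the third-order jump remainder, controlled by $n^{1-3\beta}\le n^{-\nicefrac12}$ multiplied by polynomial powers of $\abs{\hat x}$ against $\sV$. The first-order drift depends on $k$, whereas $\sA^n u^n$ uses the averaged drift $\Bar b^n = \sum_k\pi_k\widehat\Xi^n(\,\cdot\,,k)$. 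To kill the regime fluctuations, I would invoke the Poisson equation for $\cQ^n$ and set
\begin{equation*}
\chi^n(\hat x,k) \,\df\, \frac{1}{n^{\alpha}}\sum_{i\in\cI}\sum_{\ell\in\cK}\Upsilon_{k\ell}\,\widehat\Xi^n_i(\hat x,\ell)\,\partial_i u^n(\hat x)\,,
\end{equation*}
so that $\cQ^n\chi^n(\hat x,k) = \Bar b^n(\hat x)\cdot\grad u^n(\hat x) - \widehat\Xi^n(\hat x,k)\cdot\grad u^n(\hat x)$ by the defining property of the deviation matrix $\Upsilon$. With $v^n\df u^n + \chi^n$, the first-order terms in $\widehat\cL^n v^n$ collapse to $\Bar b^n\cdot\grad u^n$, and the cross second-order interaction $\cL^n_k\chi^n$ — expanded to first order in the increment and combined with \cref{EA2.2A,EA2.2D} and the centering at $x^n_*$ — produces precisely the variance correction $\Theta^n$ entering $\Sigma^n$ in \cref{E-Sigma,ES2.2B}. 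This is the algebraic matching that \cref{L5.1} is designed to perform. Collecting the residuals yields
\begin{equation*}
\babs{\widehat\cL^n v^n(\hat x,k) - \sA^n u^n(\hat x)} \,\le\, \order\Bigl(n^{-(\nicefrac{\alpha}{2}\wedge\nicefrac12)}\Bigr)\cdot \sV(\hat x)(1+\abs{\hat x})^5\,,
\end{equation*}
which is the content I would expect from \cref{L5.2}. The two dominant error contributions are the $n^{1-3\beta}$ Taylor remainder (giving rate $n^{-\nicefrac12}$) and the regime-switching leftovers $\cL^n_k\chi^n - \tfrac12\Theta^n\!:\!\grad^2 u^n$ together with the mismatch $\Bar\Gamma^n(\hat x,k)-\Bar a^n(0)$, both of size $n^{-\nicefrac{\alpha}{2}}$.

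The final step is to justify $\uppi^n(\widehat\cL^n v^n) = 0$. Because $v^n$ is unbounded, I would apply Dynkin's formula to $v^n(\widehat X^n,J^n)$ started from the stationary law and stopped at exit times from large balls, then pass to the limit using ergodicity of $(\widehat X^n,J^n)$ and the uniform moment bound \cref{ET2.2A} to control the truncation contributions. Combining these pieces,
\begin{equation*}
\uppi^n(f) - \nu^n(f) \,=\, \uppi^n\bigl(\sA^n u^n - \widehat\cL^n v^n\bigr) \,=\, \order\bigl(n^{-(\nicefrac{\alpha}{2}\wedge\nicefrac12)}\bigr)\,,
\end{equation*}
which is \cref{ET2.2B}. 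The main obstacle is not the generator matching — which becomes mechanical once the corrector $\chi^n$ is in place — but rather the tandem of (a) producing $\sV$-weighted, polynomially growing gradient estimates on $u^n$ that are sharp enough for the Taylor remainder and the $\chi^n$-driven residuals to be integrable against $\uppi^n$, and (b) justifying the stationarity identity for the unbounded test function $v^n$. Both rely crucially on the moment hypothesis \cref{ET2.2A} and on the sub-exponential structure embedded in \cref{A2.3}; the fifth power in \cref{ET2.2A} is precisely what is needed to absorb the polynomial prefactors generated along the way.
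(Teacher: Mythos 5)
Your proposal follows the same high-level scheme as the paper's proof -- solve the Poisson equation $\sA^n u^n = -f$ (after centering), build a corrector via the Poisson equation for $\cQ^n$, apply Dynkin's formula to the combination, and control residuals using the gradient/Schauder estimates on $u^n$ -- so the architecture is right. However, there is a concrete gap: your corrector $\chi^n$ is first-order only (a function of $k$ multiplying $\grad u^n$), and this is not enough. When you apply $\cL^n_k$ to $\chi^n$ and Taylor-expand, the leading piece is
\begin{equation*}
\cL^n_k\chi^n(\hat x,k) \,\approx\, \frac{1}{n^{\alpha+2\beta}}
\sum_{i,j\in\cI}\Xi^n_i(n^{\beta}\hat x + x^n_*,k)
\sum_{\ell\in\cK}\Upsilon_{k\ell}\,\Xi^n_j(n^{\beta}\hat x + x^n_*,\ell)\,\partial_{ij}u^n(\hat x)\,,
\end{equation*}
a second-order term whose coefficient is genuinely $k$-dependent. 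Taking its $\pi$-average and evaluating at $\hat x = 0$ produces $\tfrac12\Theta^n\!:\!\grad^2 u^n$, but the residual $\cL^n_k\chi^n - \tfrac12\Theta^n\!:\!\grad^2 u^n$ is \emph{not} $\order(n^{-\nicefrac{\alpha}{2}})$ as you assert. Since $\Xi^n(x^n_*,k) = \order(n)$ and $\alpha + 2\beta = 2$ for $\alpha\le1$, the $k$-fluctuation part of this coefficient is $\order(1)$ (and $\order(n^{1-\alpha})$ for $\alpha>1$, which still exceeds $n^{-\nicefrac12}$ when $\alpha<\nicefrac32$). The fact that it averages to zero over $\pi$ does not make $\uppi^n$ of it small, because $\uppi^n$ couples $\hat X^n$ and $J^n$. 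Killing it requires a \emph{second-order} regime corrector -- exactly the paper's $g^n_2$ defined in \cref{E-Tg2,E-g1g2}, chosen so that $\cQ^n g^n_2$ cancels this fluctuation; the leftover $\cL^n_k g^n_2 = \cR^n_6$ then gains another factor of $n^{-\alpha}$ and is controlled by \cref{L5.2,L5.4}. Without this second corrector, the error estimate you wrote down does not follow.

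A related, smaller omission: your $\chi^n$ absorbs the $k$-fluctuation of the \emph{drift}, but not that of the diffusion coefficient. The term $\bar\Gamma^n(\hat x,k) - \bar a^n(\hat x)$ also averages to zero over $\pi$ and is $\order(n^{-\beta}|\hat x| + n^{1-2\beta})$, with $n^{1-2\beta}=n^{\alpha-1}$ for $\alpha\le1$; this exceeds $n^{-\nicefrac{\alpha}{2}}$ whenever $\alpha>\nicefrac23$. The paper handles it via the second-order part of $g^n_1$ in \cref{E-Tg1}, whose $\cL^n_k$-image $\cR^n_5$ is the object estimated in \cref{EL5.2A}. In short: the generator matching in \cref{L5.1} is not mechanical once a single first-order corrector is in place; you need the full three-part corrector $g^n = g^n_1 + g^n_2 + g^n_3$, and the heart of the argument is verifying that the six residuals $\cR^n_1,\dots,\cR^n_6$ all land at the rate $n^{-\nicefrac{\alpha}{2}\wedge\nicefrac12}$ after integration against $\uppi^n$, which is what \cref{L5.2,L5.4} do.
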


The order of the function in \cref{ET2.2A} 
is determined by the estimates in \cref{L5.2}, and
the gradient estimates of the solutions to the Poisson equation in \cref{L5.3}.
In the following corollary, we provide a sufficient condition for \cref{ET2.2A}.
We give its proof in \cref{S5}.
In \cref{S3}, we show that this sufficient condition holds in many examples.

\begin{corollary}\label{C2.2}
Grant \cref{A2.2}. Let $\sV$ and $\widetilde{\sV}$
be two sub-exponential functions in $C^2(\RR^d)$ satisfying 
\cref{A2.3} such that 
\begin{equation}\label{EC2.2A}
\sV(x)(1 + \abs{x}^5) \,\le\, \widetilde{\sV}(x)\,,
\end{equation}
and 
\begin{equation}\label{EC2.2B}
(1 + \abs{x})\bigl( \abs{\grad{\widetilde{\sV}(x)}} 
+ \babs{\grad^2{\widetilde{\sV}(x)}} \bigr)
+ (1 + \abs{x}^2)\bigl[\widetilde{\sV}\bigr]_{2,1;B_{\nicefrac{m_0}{n^{\beta}}}(x)}
\,\le\, C \widetilde{\sV}(x)\,,
\end{equation}
for some positive constant $C$ and any $x\in\RR^d$,
and with $m_0$ as in \cref{E-m0}.
Then
\cref{ET2.2A} holds for $\sV$.
As a consequence, \cref{ET2.2B} holds. 
\end{corollary}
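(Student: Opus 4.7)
The proof's target \cref{ET2.2A} for $\sV$ reduces, via \cref{EC2.2A}, to a uniform-in-$n$ bound on $\int_{\RR^d\times\cK}\widetilde\sV(\Hat x)\,\uppi^n(\D\Hat x,\D k)$; \cref{ET2.2B} then follows directly from \cref{T2.2}. My plan is to apply \cref{T2.1} with the $n$-independent Lyapunov function $\Lyap^n\equiv\widetilde\sV$; the resulting Foster--Lyapunov inequality for $\widehat\cL^n$ both yields ergodicity of $(\widehat X^n,J^n)$ and, upon integrating against $\uppi^n$, delivers the required moment bound.

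Both inequalities in \cref{ET2.1A} for $\widetilde\sV$ follow from the pointwise bounds \cref{EC2.2B} combined with the sub-exponential property. One writes the first difference $\widetilde\sV(x+y)-\widetilde\sV(x)$ as a line integral of $\nabla\widetilde\sV$ and the mixed second difference as a double integral of $\nabla^2\widetilde\sV$, and uses the comparison $\widetilde\sV(x+w)\le c\widetilde\sV(x)$ for $\abs{w}\le 1$ to absorb $\widetilde\sV$ at intermediate points into $\widetilde\sV(x)$. The seminorm in \cref{EC2.2B} is imposed only on balls of radius $m_0/n^{\beta}$, which is precisely the perturbation scale $\abs{n^{-\beta}z}\le m_0/n^{\beta}$ arising in the proof of \cref{T2.1}.

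The main step is to verify \cref{ET2.1B} for $\overline\cL^n$ with Lyapunov function $\widetilde\sV$. I compare $\overline\cL^n\widetilde\sV$ to $\sA^n\widetilde\sV$ through the Taylor expansion
\[
\widetilde\sV(\Hat x+n^{-\beta}z)-\widetilde\sV(\Hat x)\,=\,n^{-\beta}z\transp\nabla\widetilde\sV(\Hat x)+\tfrac{1}{2}\,n^{-2\beta}z\transp\nabla^2\widetilde\sV(\Hat x)\,z+r_n(\Hat x,z),
\]
with $\abs{r_n(\Hat x,z)}\le\tfrac{1}{2}\,n^{-3\beta}\abs{z}^3\,[\widetilde\sV]_{2,1;B_{m_0/n^{\beta}}(\Hat x)}$. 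Summed against the averaged rates $\overline r^n(n^{\beta}\Hat x+x^n_*,z)$, the first-order term is $\overline b^n(\Hat x)\cdot\nabla\widetilde\sV(\Hat x)$, matching the drift in $\sA^n\widetilde\sV$ exactly. The second-order contribution is $\tfrac{1}{2}\,\overline a^n(\Hat x):\nabla^2\widetilde\sV(\Hat x)$, whereas $\sA^n\widetilde\sV$ has $\tfrac{1}{2}\bigl(\overline a^n(0)+\Theta^n\bigr):\nabla^2\widetilde\sV$. Three sources of discrepancy must then be controlled: (i) the Lipschitz hypothesis \cref{EA2.2B} yields $\babs{\overline a^n(\Hat x)-\overline a^n(0)}\le Cn^{-\beta}\abs{\Hat x}$; (ii) \cref{EA2.2C} together with $\beta=\max\{\nicefrac{1}{2},1-\nicefrac{\alpha}{2}\}$ makes $\abs{\Theta^n}$ uniformly bounded in $n$; (iii) the Taylor remainder summed against $\overline r^n=\order(n+n^{\beta}\abs{\Hat x})$, estimated through the seminorm bound in \cref{EC2.2B}, is $\sorder(\widetilde\sV)$ uniformly. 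Combining (i)--(iii) with $\babs{\nabla^2\widetilde\sV}\le C\widetilde\sV/(1+\abs{\Hat x})$ from \cref{EC2.2B} produces
\[
\overline\cL^n\widetilde\sV(\Hat x)\,\le\,\sA^n\widetilde\sV(\Hat x)+\varepsilon_n(\Hat x)\,\widetilde\sV(\Hat x),\qquad\varepsilon_n(\Hat x)\,\le\, C\bigl(n^{-\beta}+(1+\abs{\Hat x})^{-1}\bigr).
\]
Since \cref{A2.3} applied to $\widetilde\sV$ gives $\sA^n\widetilde\sV\le\Ind_\sB-\kappa\widetilde\sV$, choosing first $\abs{\Hat x}$ and then $n$ sufficiently large absorbs $\varepsilon_n\widetilde\sV$ into $-\tfrac{\kappa}{2}\widetilde\sV$ off a compact set, delivering \cref{ET2.1B} for $n$ large with $\overline C_2$ arbitrarily close to $\kappa$.

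\Cref{T2.1} then supplies $\widehat\Lyap^n\ge\tfrac{1}{2}(\widetilde\sV-1)$ with $\widehat\cL^n\widehat\Lyap^n\le\widehat C_1-\widehat C_2\widehat\Lyap^n$. The latter gives ergodicity of $(\widehat X^n,J^n)$ for $n$ large and, by the standard integration-against-invariant-measure argument, $\int\widehat\Lyap^n\,\D\uppi^n\le\widehat C_1/\widehat C_2$, so that $\int\widetilde\sV\,\D\uppi^n$ is uniformly bounded in $n$. \cref{EC2.2A} then gives \cref{ET2.2A} for $\sV$, and \cref{T2.2} yields \cref{ET2.2B}. The hardest part is the verification of \cref{ET2.1B}: the simultaneous control of the Lipschitz variation of $\overline a^n$, the correction $\Theta^n$, and the Taylor remainder against the contraction $-\kappa\widetilde\sV$ of \cref{A2.3} is delicate, and relies essentially on the specific scaling $\beta=\max\{\nicefrac{1}{2},1-\nicefrac{\alpha}{2}\}$ that makes $\Theta^n$ bounded and the remainder negligible.
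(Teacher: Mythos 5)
Your proof is essentially correct in outline but takes a genuinely different route from the paper. The paper proves \cref{C2.2} by applying the identity of \cref{L5.1} directly to $\widetilde\sV$, i.e., it uses the corrector $g^n[\widetilde\sV]$ built from the deviation matrix of $Q$ and the solution of the diffusion Poisson equation, bounds the residuals $\cR^n_1,\dots,\cR^n_6$ via \cref{L5.2} and \cref{EC2.2B}, and concludes from \cref{A2.3} that $V=\widetilde\sV+g^n[\widetilde\sV]$ satisfies a Foster--Lyapunov inequality for $\widehat\cL^n$; integrating against $\uppi^n$ gives \cref{ET2.2A}. You instead \emph{compose} two comparison steps: first \cref{T2.1}, whose corrector $\widetilde\Lyap^n = n^{-\alpha}\sum_\ell\cT_{k\ell}\Breve\cL^n_\ell\widetilde\sV$ compares $\widehat\cL^n$ to the averaged operator $\overline\cL^n$; and second a Taylor expansion comparing $\overline\cL^n\widetilde\sV$ to $\sA^n\widetilde\sV$, invoking \cref{A2.3} to absorb the discrepancy. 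Your second step (the drift match, the Lipschitz bound on $\Bar a^n(\Hat x)-\Bar a^n(0)$, the boundedness of $\Theta^n$ from the scaling $\beta=\max\{\nicefrac12,1-\nicefrac{\alpha}{2}\}$, and the Taylor remainder controlled by the $B_{m_0/n^\beta}$ seminorm) is clean and correct. The advantage of your route is that it reuses \cref{T2.1} as a black box rather than redoing the $g^n$-corrector computation; the paper's route reuses \cref{L5.1,L5.2}, which must be established for \cref{T2.2} anyway.

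One point deserves flagging, because it is where your proof is thinnest. To invoke \cref{T2.1} you must verify the \emph{second} inequality of \cref{ET2.1A}, which (cf.\ \cref{ER2.1A}) effectively requires $\abs{\grad^2\widetilde\sV(x)} \le c\,\widetilde\sV(x)/(1+\abs{x}^2)$. As written, \cref{EC2.2B} puts only the weight $(1+\abs{x})$ on $\abs{\grad^2\widetilde\sV}$, so the double-integral bound on the mixed second difference yields $(1+\abs{x}^2)\abs{\Delta^2\widetilde\sV}\le C\abs{y}\abs{z}(1+\abs{x})\widetilde\sV(x)$, off by a factor $(1+\abs{x})$ from what \cref{ET2.1A} demands. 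Thus the verification of \cref{ET2.1A} does not follow from \cref{EC2.2B} read literally; it requires the Hessian to carry the weight $(1+\abs{x}^2)$ as in \cref{ER2.1A}. You should state this explicitly, or argue separately that the weaker Hessian bound suffices because \cref{T2.1} only applies \cref{ET2.1A} to perturbations of size $O(n^{-\beta})$ (in which case you would need to re-trace the cancellation in the paper's display \cref{PT2.1I}, where the $(1+\abs{\Hat x}^2)$ denominator is used to absorb the $n^{2\beta}\abs{\Hat x}^2$ contribution from the two rate factors --- a $(1+\abs{\Hat x})$ denominator leaves an unbounded factor $\abs{\Hat x}$). Be aware that the paper's own residual estimate for $\cR^n_5[\widetilde\sV]$ has the same sensitivity, so this is a place where the hypotheses and the argument need to be lined up carefully rather than invoked by analogy.
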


\section{Examples}\label{S3}
In this section, we demonstrate how the
results of \cref{S2} can be applied through examples.

\begin{example}[Markov-modulated $M/M/\infty$ queue] \label{Ex3.1}
We consider a process given by
\begin{equation*}
{X}^n(t) \,\df\, {X}^n(0)
+ A^{n}_1\biggl(\int_0^{t}n\lambda\bigl(J^n(s)\bigr)\,\D{s}\biggr)
- A^{n}_{-1}\biggl(\int_0^{t}\mu\bigl(J^n(s)\bigr) X^n(s)\,\D{s}\biggr) \,,
\end{equation*}
where  
$ A^{n}_{-1}$ and $A^{n}_1$ are mutually independent Poisson processes with rate one,
for $n\in\NN$. We assume that 
$\lambda(k) > 0$ 
and  $\mu(k) > 0$, for $k\in\cK$.
We let
\begin{equation}\label{Ex3.2A}
{x}^n_{*} \,=\, n\frac{\sum_{k\in\cK}\pi_k\lambda(k)}{\sum_{k\in\cK}\pi_k\mu(k)}\,.
\end{equation} 
Recall that $\widehat{X}^n = n^{-\beta}(X^n - x^n_*)$, 
and then $\widehat{\fX}^n = \{\Hat{x}^n(x) \colon x\in\ZZ_+ \}$. 
It is evident that $\lambda(k)$ and $\mu(k)x$ satisfy \cref{H1}~(c) and (d).
Let $\Bar{\lambda} \df \sum_{k\in\cK}\pi_k\lambda(k)$ 
and $\Bar{\mu} \df \sum_{k\in\cK}\pi_k\mu(k)$.
By \cref{D2.2}, we obtain 
\begin{equation}\label{Ex3.2B}
\overline\cL^n f(\Hat{x}) \,=\,n\Bar{\lambda}
\,\bigl(f(\Hat{x} + n^{-\beta}) - f(\Hat{x})\bigr) 
+ \Bar{\mu}\,(n^{\beta}\Hat{x} + x^n_*)
\bigl(f(\Hat{x} - n^{-\beta}) - f(\Hat{x})\bigr)
\qquad \forall\,\Hat{x}\in\widehat{\fX}^n\,.
\end{equation}
Let $\Lyap(x) = \abs{x}^m$, for $x\in\RR$, with even integer $m \ge 2$. 
It is clear that 
\begin{equation}\label{Ex3.1C}
\abs{\Hat{x} \pm n^{-\beta}}^m - \abs{\Hat{x}}^m \,=\, \pm n^{-\beta}m(\Hat{x})^{m-1} 
+ \order(n^{-2\beta})\order(\abs{\Hat{x}}^{m-2})\,.
\end{equation}
Thus we obtain from \cref{Ex3.2A,Ex3.2B} that 
\begin{equation*}
\begin{aligned}
\overline\cL^n \Lyap(\Hat{x}) &\,=\, n\Bar{\lambda}
\bigl(\abs{\Hat{x}+n^{-\beta}}^m - \abs{\Hat{x}}^m - n^{-\beta}m\abs{\Hat{x}}^{m-1}\bigr)
+ \Bar{\mu} n^{\beta} \Hat{x} 
\bigl(\abs{\Hat{x}-n^{-\beta}}^m - \abs{\Hat{x}}^m\bigr)  \nonumber \\
&\qquad + \Bar{\mu} {x}^n_{*}
\bigl(\abs{\Hat{x}-n^{-\beta}}^m - \abs{\Hat{x}}^m 
+ mn^{-\beta}\abs{\Hat{x}}^{m-1}\bigr) \nonumber \\
&\,=\, \Bar{\lambda}\order(n^{1-2\beta})\order(\abs{\Hat{x}}^{m-2}) 
+ \Bar{\mu}
\bigl(-\abs{\Hat{x}}^m + \order(n^{-\beta})\order(\abs{\Hat{x}}^{m-1}) 
+ \order(n^{1-2\beta})\order(\abs{\Hat{x}}^{m-2}) \bigr) \nonumber\\
&\,\le\, C_1 - C_2\Lyap(\Hat{x}) \qquad \forall\,\Hat{x}\in\widehat{\fX}^n\,,
\end{aligned}
\end{equation*}
for some positive constants $C_1$ and $C_2$,
where in the second equality we use \cref{Ex3.1C}, 
and in the last line we apply Young's inequality.
It is straightforward to verify that $\Lyap(x)$ satisfies \cref{ER2.1A}.
Therefore, the assumptions in \cref{T2.1} hold, 
and $(\widehat{X}^n,J^n)$ is exponentially ergodic for all large enough $n$.

Next we verify the assumptions in \cref{C2.2}.
The equation in \eqref{EA2.2A} becomes
\begin{equation}\label{Ex3.2D}
\sum_{k\in\cK}\pi_k\Xi^n(x^n_*,k) \,=\, 
\sum_{k\in\cK}\pi_k n\lambda(k)
- \sum_{k\in\cK}\pi_k\mu(k)x^n_* \,=\, 0\,.
\end{equation}
Note that $x^n_*$ in \eqref{Ex3.2A} is the unique solution to \cref{Ex3.2D}.
Recall the representation of $\widehat{Y}^n$ in \cref{E-sde}. 
In this example, it follows by \cref{Ex3.2D} that
$$
\Bar{b}^n(x) \,=\, n^{-\beta}\Bar{\mu}x^n_* - n^{-\beta}\Bar{\mu} (n^{\beta}x + x^n_*)
\,=\, -\Bar{\mu}x \qquad \forall\,x\in\RR\,,
$$
and
$$
\Bar{a}^n(0) \,=\, n^{-2\beta}(n\Bar{\lambda} + \Bar{\mu}x^n_*) \,=\, 
n^{1-2\beta}2\Bar{\lambda}\,.
$$
Let $\sV(x) = \kappa + \abs{x}^m$, with $\kappa \ge 1$ for some even integer $m\ge 2$.
Then, \cref{A2.2,A2.3} are satisfied, and the result
in \cref{C2.2} follows.
\end{example}

The following example concerns  Markov-modulated multiclass $M/M/N + M$ queues.
Exponential ergodicity for these queues under a static priority scheduling policy
has been studied in \cite[Theorem 4]{ADPZ19},
which treats a special case of the model considered in this paper.
Here we show that by using the result in \cref{C2.1},
the proof of \cite[Theorem 4]{ADPZ19} is simplified a lot.
We also extend the results in \cite[Theorem 4 and Lemma~3]{ADPZ19} to include a larger 
class of scheduling policies such that 
the Markov-modulated queues have exponential ergodicity.

\begin{example}
[Markov-modulated multiclass $M/M/N + M$ queues]\label{Ex3.3}
We consider a $d$-dimensional birth-death process $\{X^n(t)\colon t\ge0\}$,
with state space $\ZZ^d_+$, given by
\begin{equation*}
\begin{aligned}
{X}^n_i(t) &\,\df\, {X}^n_i(0)
+ A^{n}_{e_i}\biggl(\int_0^{t}n\lambda_i\bigl(J^n(s)\bigr)\,\D{s}\biggr) \\
&\qquad- A^{n}_{-e_i}\biggl(\int_0^{t}\Bigl(\mu_i\bigl(J^n(s)\bigr)z^n_i(X^n(s))
+ \gamma_i\bigl(J^n(s)\bigr)\bigl(X^n_i(s) - z^n_i(X^n(s))\bigr)\Bigr)\,\D{s}\biggr)
\end{aligned}
\end{equation*}
for $i\in\cI\df\{1,\dots,d\}$, 
where $\{A^{n}_{e_i},A^{n}_{-e_i}\colon i\in\cI\}$
are mutually independent Poisson processes with rate $1$, and
$z^n$ is the static priority policy defined by
$$
z^n_i(x) \,\df\, x_i\wedge\biggl(n - \sum_{j=1}^{i-1}x_j\biggr)^+
\qquad \forall\,i\in\cI\,.
$$
We assume that $\{\lambda_i(k),\mu_i(k),\gamma_i(k)\colon i\in\cI,k\in\cK\}$
are strictly positive, and the system is critically loaded, that is, 
$\sum_{i\in\cI}\rho_i = 1$ with $\rho_i \df \nicefrac{\Bar{\lambda}_i}{\Bar{\mu}_i}$. 
\Cref{EA2.2A} becomes 
$$
\sum_{k\in\cK}\pi_k \Xi^n_i({x}^n_*,k)
\,=\, n\Bar{\lambda}_i - \Bar{\mu}_iz^n_i(x^n_*) 
- \Bar{\gamma}_i(x^n_{*,i} - z^n_i(x^n_*)) \,=\, 0
\qquad \forall\,i\in\cI\,,
$$
which has a unique solution $x^n_* = n\rho$
with $\rho = (\rho_1,\dots,\rho_d)$.

We  first establish exponential ergodicity 
and verify the assumptions in \cref{A2.1}. 
Let
$$
\psi^n_{e_i}(x,k) \,=\, n \lambda_i(k)\,,
\qquad \psi^n_{-e_i}(x,k) \,=\, n \rho_i \mu_i(k)\,,
$$
and
$$
\phi^n_{-e_i}(x,k) \,=\, \mu_i(k)(z^n_i(x) - n\rho_i) 
+ \gamma_i(k)(x_i - z^n_i(x)) 
$$ 
for $i\in\cI$ and $(x,k)\in\RR^d\times\cK$.
Then, $\Bar{\psi}^n_{e_i}(x) = n\Bar{\lambda}_i$
and $\Bar{\psi}^n_{-e_i}(x) = n\rho_i\Bar{\mu}_i = n\Bar{\lambda}_i$.
It is evident that the functions $\psi^n_{e_i}$
and $\psi^n_{-e_i}$ satisfy \cref{A2.1B,A2.1C}.
Note that $z_i^n(x) \le x_i$, and thus \cref{H1} (d) is satisfied. 
Let $\Lyap_{\zeta,m}(x) \df \sum_{i\in\cI}\zeta_i\abs{x_i}^m$ 
for $x\in\RR^d$, even integer $m\ge 2$, and a positive vector $\zeta\in\RR^d$
to be chosen later.
Recall $\cG^n_k$ in \cref{EC2.1A}.
It is straightforward to verify that 
\begin{equation*}
\begin{aligned}
\cG^n_k\Lyap_{\zeta,m}(\Hat{x}) 
&\,=\, n^{-\beta}\sum_{i\in\cI} - \phi^n_{-e_i}(n^{\beta}\Hat{x}+ n\rho,k)
\lambda_i\abs{\Hat{x}_i}^{m-1} \\
&\qquad + n^{-2\beta}\sum_{i\in\cI}
\bigl(2n\Bar{\lambda}_i + \phi^n_{-e_i}(n^{\beta}\Hat{x}+ n\rho,k)\bigr)
\order(\abs{\Hat{x}_i}^{m-2})\,.
\end{aligned}
\end{equation*}
Since $\inf_{i,k}\{\mu_i(k),\gamma_i(k)\} > 0$, 
it follows by \cite[Lemma~5.1]{ABP15} that there exist some 
positive vector $\lambda$, $n_0\in\NN$, and positive constants $C_1$ and
$C_2$ such that
\begin{equation}\label{Ex3.3A}
\cG^n_k\Lyap_{\zeta,m}(\Hat{x}) \,\le\, C_1
- C_2\Lyap_{\zeta,m}(\Hat{x})\,, 
\quad (x,k)\in\widehat{\fX}^n\times\cK\,,
\quad n\ge n_0\,.
\end{equation}
Therefore, the result in \cref{C2.1} follows.
We remark that the claim in \cref{C2.1} holds for any work-conserving scheduling policy 
satisfying \cref{Ex3.3A}, since there is no continuity assumption on $\phi^n_{-e_i}$.
This extends the results of \cite[Theorem~4 and Lemma~3]{ADPZ19}.
Indeed the proofs of these results can be simplified a lot
following the approach above,
since we only need to consider the constant functions $\psi^n_{e_i}$ and $\psi^n_{-e_i}$
in $x$.

Next we focus on steady-state approximations for this example.
It is straightforward to verify that
the coefficients in \cref{E-sde} take the form 
\begin{equation}\label{Ex3.3B}
\begin{aligned}
\Bar{b}^n_i(x) \,=\, -\frac{\Bar{\mu}_i}{n^{\beta}}
\bigl(z^n_i(n^{\beta}x + x^n_{*}) - z^n_i(x^n_*)\bigr)
- \frac{\Bar\gamma_i}{n^{\beta}}\bigl(n^{\beta}x_i 
- (z^n_i(n^{\beta}x + x^n_{*}) - z^n_i(x^n_*)) \bigr)\,,
\quad i\in\cI\,,
\end{aligned}
\end{equation}
and
$$
\Bar{a}^n_{ii}(0) \,=\, \frac{1}{n^{2\beta}}
\bigl(n\Bar{\lambda}_i + \Bar{\mu}_i z^n_i({x}^n_*) 
+ \Bar{\gamma}_i\bigl(x^n_{*,i} - z^n_i(x^n_*)\bigr)
\,=\, n^{1-2\beta}2\Bar{\lambda}_i\,,
\quad \forall\,i\in\cI\,,
$$
and that $\Bar{a}^n_{ij}(0) = 0$ for $i\neq j$.
We let $\sV_{\zeta,m}(x) = \kappa + \sum_{i\in\cI}\zeta_i\abs{x_i}^m$ 
 for some positive 
vector $\zeta\in\RR^d$, an even integer $m\ge 2$, and $\kappa \ge 1$. 
Repeating the calculation in \cite[Lemma 5.1]{ABP15},
it follows that there exist some positive vector $\zeta\in\RR^d$ and
some positive constants $c_1$ and $c_2$ such that
$$
\langle \Bar{b}^n(x), \grad\sV_{\zeta,m}(x)\rangle 
\,\le\, c_1 - c_2\sV_{\zeta,m}(x)
\qquad \forall\,x\in\RR^d\,.
$$
It follows directly by Young's inequality that there exists some positive constant
$c_3$ such that
$$
\babs{\grad^2\sV_{\zeta,m}(x)} \,\le\, c_3 - \frac{c_2}{2}\sV_{\zeta,m}(x)
\qquad \forall\,x\in\RR^d\,.
$$
Thus, we have verified \cref{A2.3}. 
Since $z^n_i$ is Lipschitz continuous, it is evident that \cref{A2.2} holds.
As a result, \cref{C2.2} follows.

When $d=1$, \cref{EA2.2A} becomes
$$
\sum_{k\in\cK}\pi_k \Xi^n({x}^n_*,k) \,=\, 
n\Bar{\lambda} -  \Bar{\mu}\bigl({x}^n_* \wedge n\bigr) 
- \Bar{\gamma}\bigl({x}^n_* - n)^+ \,=\, 0\,,
$$
which can be solved directly without the critically loaded assumption. 
It is straightforward to verify that \cref{Ex3.3B} becomes
\begin{equation*}
\begin{aligned}
\Bar{b}^n(x) \,=\, -\Bar{\mu}\bigl((x+ n^{-\beta}x^n_*)\wedge n^{1-\beta} 
- n^{-\beta}x^n_*\wedge n^{1-\beta}\bigr)
- \Bar{\gamma}\bigl( (x + n^{-\beta}x^n_* - n^{1-\beta})^+
-n^{-\beta}(x^n_*-n)^+\bigr)\,.
\end{aligned}
\end{equation*}
Repeating the procedure as above, we establish \cref{C2.2}.
\end{example}

\begin{example}[Markov-modulated $M/PH/n + M$ queues]\label{Ex3.4}
We assume that all customers start service in phase-$1$, and
there are $d$ phases. 
Given $J^n = k$,
the probability getting phase-$j$ after 
finishing service in phase-$i$ is denoted by $p_{ij}(k)$.
Let $X_1^n$ denote the total number of customers including in service and queue 
in phase-$1$, and
$X_i^n$, for $i\neq 1$, denotes the number of customers in service in phase-$i$. 
We refer reader to \cite{DHT10} for a detailed description of the model without
Markov modulation, and to \cite{Zhu-91} for an application of Markov-modulated
phase-type distributions in queueing. 
Then, \cref{E-Lambda} becomes 
\begin{equation*}
\begin{cases}
\Xi^n_1(x,k) \,=\, n\lambda(k)  
- \mu_1(k)\bigl(x_1 - (\langle e, x \rangle - n)^+\bigr)
- \gamma(k)\bigl(\langle e,x \rangle - n\bigr)^+ \\
\Xi^n_i(x,k) \,=\, -\mu_i(k)x_i 
+ \sum_{j\neq i,j\neq 1}p_{ji}(k)\mu_j(k)x_j 
+ p_{1i}(k)\mu_1(k)\bigl(x_1 - (\langle e, x \rangle - n)^+\bigr)
\quad \text{for\ } i\neq 1\,,
\end{cases}
\end{equation*}
and \cref{EA2.2A} becomes
$$
\begin{cases}
n \Bar\lambda  
- \Bar{\mu}_1\bigl(x^n_{*,1} - (\langle e, x^n_* \rangle - n)^+\bigr)
- \Bar{\gamma}\bigl(\langle e,x^n_* \rangle - n\bigr)^+ \,=\, 0\,,
\\
 - \Bar{\mu}_ix^n_{*,i} 
+ \sum_{j\neq i,j\neq 1}\Bar{p}_{ji}\Bar{\mu}_jx^n_{*,j}
+ \Bar{p}_{1i}\Bar{\mu}_1\bigl(x^n_{*,1} - (\langle e, x^n_* \rangle - n)^+\bigr)\,=\, 0
\quad \text{for\ } i\neq 1\,,
\end{cases}
$$
where $\Bar{\gamma} = \sum_{k\in\cK}\pi_k\gamma(k)$, 
and $\Bar{p}_{ij} = \sum_{k\in\cK}\pi_k p_{ij}(k)$.
Assume that $\Bar{\lambda} = 1$. 
Note that $\{\Xi^n_i\colon i\in\cI\}$ are piecewise linear functions
in their first argument.
It is straightforward to verify that \cref{H1} and \cref{A2.2} are satisfied. 
We get $x^n_*= n\rho$,
where 
$$
\rho \,\df\, \frac{\Bar{M}^{-1}e_1}{e\transp \Bar{M}^{-1}e_1}\,, \quad \text{and} \quad
\Bar{M} \,\df\, (I - \Bar{P}\transp)\diag(\Bar{\mu})\,,
$$
with the identity matrix $I$ and $\Bar{P}\df [\Bar{p}_{ij}]$.
The coefficients in \cref{E-sde} satisfy
\begin{align*}
\Bar{b}^n(x) &\,=\, - \Bar{M}x 
+ \bigl(\Bar{M} - \Bar{\gamma} I\bigr)e_1\langle e,x \rangle^+\,,\\
\Bar{a}^n_{ii}(0) &\,=\, \begin{cases}
n^{1-2\beta}\bigl(1 + \Bar{\mu}_1\rho_1\bigr)\,,\quad & \text{if } i=1\,, \\
n^{1-2\beta}\Bigl(\sum_{j\neq i,j\neq 1}\Bar{p}_{ji}\Bar{\mu}_j\rho_j + \Bar{\mu}_i\rho_i 
+ \Bar{\mu}_1\rho_1\Bar{p}_{1i}\Bigr)\,, \quad & \text{if } i\neq1\,,
\end{cases}
\intertext{and}
\Bar{a}^n_{ij}(0) &\,=\, 
n^{1-2\beta}\bigl(\Bar{p}_{ij}\Bar{\mu}_i\rho_i + \Bar{p}_{ji}\Bar{\mu}_j\rho_j\bigr)
\,, \quad i\neq j\,.
\end{align*}
By \cite[Theorem 3.5]{APS19} (see also \cite[Theorem 3]{Dieker-Gao}),
there exists a function $\widetilde{\sV}$ satisfying the assumption in \cref{C2.2}.
In analogy to \cite[Theorem 3.5]{APS19}, 
we can show that there exists a function 
$\Lyap(x) = \langle x, Rx \rangle^{\nicefrac{m}{2}}$, 
for $m\ge 2$ and some positive definite matrix $R$, satisfying 
the conditions in \cref{T2.1}.
\end{example}

\section{Proofs of \texorpdfstring{\cref{T2.1,C2.1}}{}}\label{S4}

We let $\cT=[\cT_{k\ell}]_{k,\ell\in\cK}$ denote a left pseudo-inverse of
the transition matrix $\cQ$.
That is,
\begin{equation}\label{E-cT}
\cQ\cT y \,=\, y\quad\text{for all\ } y\in\RR^{k_0}
\text{\ such that\ } \sum_{k\in\cK} \pi_k y_k=0\,.
\end{equation}
We also need the following definition.

\begin{definition}\label{D4.1}
Recall \cref{E-cL} and \cref{D2.2}.
Let $\Breve\cL^n_k\df\overline\cL^n-\cL^n_k$.
This operator takes the form
\begin{equation*}
\Breve\cL^n_kf(\Hat{x},k) \,\df\, \sum_{z\in\sZ^n}
\Breve{r}^n_k(n^{\beta}\Hat{x} + x^n_*,z)
\bigl(f(\Hat{x}+n^{-\beta}z,k)-f(\Hat{x},k)\bigr)\,,
\quad (\Hat{x},k) \in \widehat{\fX}^n\times\cK\,,
\end{equation*}
for $f\in C_b(\RR^d\times\cK)$,
where
$$\Breve{r}^n_k(x,z)\df 
\Bar{r}^n(x,z) - \Tilde{r}^n_k(x,z)\,,
\qquad (x,k)\in {\fX}^n\times\cK\,.$$ 
\end{definition}

\begin{proof}[Proof of \cref{T2.1}]
Let
\begin{equation}\label{E-cTA}
\widetilde{\Lyap}^n(\Hat{x},k) \,\df\, 
\frac{1}{n^{\alpha}}\sum_{\ell\in\cK}\cT_{k\ell}\,\Breve\cL^n_{\ell}\Lyap^n(\Hat{x})\,,
\quad (\Hat{x},k)\in\widehat{\fX}^n\times\cK\,.
\end{equation}
Then,
\begin{equation}\label{E-cTB}
\cQ^n\widetilde{\Lyap}^n(\Hat{x},k) \,=\,
\Breve\cL^n_k\Lyap^n(\Hat{x}) \qquad\forall\,(\Hat{x},k)\in\widehat{\fX}^n\times\cK\,,
\end{equation}
by \cref{E-cT}.

We define 
\begin{equation}\label{PT2.1A}
\widehat{\Lyap}^n(\Hat{x},k) 
\df \Lyap^n(\Hat{x}) + \widetilde{\Lyap}^n(\Hat{x},k)\,,\quad
(\Hat{x},k)\in\widehat{\fX}^n\times\cK\,.
\end{equation} 
By \cref{H1}\,(c) and (d), we have
\begin{equation}\label{PT2.1B}
\Tilde{r}^n_k(n^{\beta}\Hat{x} + x^n_*,z) 
\,\le\, C_0 (n^{\nicefrac{\alpha}{2}}+n^{\beta}\abs{\Hat{x}} + n)
\qquad \forall\,(\Hat{x},k)\in\widehat{\fX}^n\times\cK\,, 
\ \forall\,z\in\sZ^n\,, \ \forall\,n\in\NN\,, 
\end{equation}
We choose $N_1$ large enough so that $m_0 \le \varepsilon_0 N_1^\beta$,
with $m_0$ as defined in \cref{H1}\,(a).
By \cref{H1}\,(a) and (b), \cref{ET2.1A,PT2.1B}, we have
\begin{equation}\label{PT2.1C}
\babs{\Breve\cL^n_{k} \Lyap^n(\Hat{x})}
\,\le\, N_0 C_0 (n^{\nicefrac{\alpha}{2}}+n^{\beta}\abs{\Hat{x}} + n)\, C m_0
\frac{1+\Lyap^n(\Hat{x})}{n^\beta (1+\abs{\Hat{x}})}
\end{equation}
for all $n\ge N_1$.
Therefore, 
since $\alpha + \beta - 1 \ge \nicefrac{\alpha}{2}$ for $\alpha > 0$,
it follows by \cref{PT2.1B,PT2.1A,PT2.1C} that there exists $n_1\in\NN$,
$n_1\ge N_1$, such that 
\cref{ET2.1C} holds.

Recall the definitions in \cref{E-HcLn,E-cL,E-Lavg}. 
We have  
\begin{equation*}
\overline{\cL}^n\Lyap^n(\Hat{x})
\,=\, \cL_k^n\Lyap^n(\Hat{x}) + \Breve\cL^n_k\Lyap^n(\Hat{x})
\,=\, \cL_k^n\Lyap^n(\Hat{x}) + \cQ^n  \widetilde{\Lyap}^n(\Hat{x},k)
\end{equation*}
by \cref{E-cTB}.
Therefore, since $\cQ^n \Lyap^n(\Hat{x}) = 0$, we obtain
\begin{equation}\label{PT2.1D}
\begin{aligned}
\widehat{\cL}^n \widehat{\Lyap}^n(\Hat{x},k) &\,=\,
\cL_k^n\Lyap^n(\Hat{x}) + \cL^n_k \widetilde{\Lyap}^n(\Hat{x},k)
+ \cQ^n  \widetilde{\Lyap}^n(\Hat{x},k) \\
&\,=\, \overline{\cL}^n\Lyap^n(\Hat{x})
+ \cL^n_k \widetilde{\Lyap}^n(\Hat{x},k)
\qquad \forall\,(\Hat{x},k)\in\widehat{\fX}^{n}\times\cK\,.
\end{aligned}
\end{equation}

We define the function 
$$
G^n_k(\Hat{x},z) \,\df\, 
\Breve{r}^n_k\bigl(n^{\beta}\Hat{x} + x^n_{*},z\bigr)
\bigl(\Lyap^n(\Hat{x}+n^{-\beta}z) - \Lyap^n(\Hat{x})\bigr)\,.
$$
It is straightforward to verify, using \cref{E-cTA}, that
\begin{equation}\label{PT2.1E}
\begin{aligned}
\cL^n_k \widetilde{\Lyap}^n(\Hat{x},k) 
\,=&\, \sum_{h\in\sZ^n}
\Tilde{r}^n_k(n^{\beta}\Hat{x} + {x}^n_*,h)
\bigl(\widetilde{\Lyap}^n(\Hat{x} + n^{-\beta}h,k)
- \widetilde{\Lyap}^n(\Hat{x},k)\bigr)\\
\,=&\, \frac{1}{n^{\alpha}} \sum_{h,z\in\sZ^n}
\Tilde{r}^n_{k}(n^{\beta}\Hat{x} + {x}^n_*,h) \sum_{\ell\in\cK}\cT_{k\ell}
\bigl(G^n_{\ell}(\Hat{x} + n^{-\beta}h,z) - G^n_{\ell}(\Hat{x},z)\bigr)\,.
\end{aligned}
\end{equation}
On the other hand, it follows by \cref{H1}\,(c), and a triangle
inequality, that 
\begin{equation}\label{PT2.1F}
\begin{aligned}
\abs{G^n_k(\Hat{x} + n^{-\beta}h,z) - G^n_k(\Hat{x},z)} 
&\,\le\,2 C_0(n^{\nicefrac{\alpha}{2}}+\abs{h})\,
\babs{\Lyap^n(\Hat{x}+n^{-\beta}z) - \Lyap^n(\Hat{x})} \\ 
&\mspace{50mu}+  
\babs{\Breve{r}^n_k(n^{\beta}\Hat{x} + x^n_{*} + h,z)}
\,\babs{\Lyap^n(\Hat{x} + n^{-\beta}z + n^{-\beta}h) \\
&\mspace{100mu}
- \Lyap^n(\Hat{x} + n^{-\beta}h) - \Lyap^n(\Hat{x}+ n^{-\beta}z) 
+ \Lyap^n(\Hat{x})}
\end{aligned}
\end{equation}
for all $h,z\in\sZ^n$.
As in \cref{PT2.1B}\,, we have
\begin{equation}\label{PT2.1G}
\abs{\Breve{r}^n_k(n^{\beta}\Hat{x} + x^n_*+h,z)}
\,\le\, C_0 (n^{\nicefrac{\alpha}{2}}+n^{\beta}\abs{\Hat{x}} + \abs{h} + n)
\quad \forall\,(\Hat{x},k)\in\widehat{\fX}^n\times\cK\,, 
\ \forall\,h,z\in\sZ^n\,,\ \forall\,n\in\NN\,.
\end{equation}
By \cref{ET2.1A} and \cref{H1}\,(a), we have
\begin{equation}\label{PT2.1H}
\begin{aligned}
\babs{\Lyap^n(\Hat{x}+n^{-\beta}z) - \Lyap^n(\Hat{x})}
&\,\le\, C m_0
\frac{1+\Lyap^n(\Hat{x})}{n^\beta (1+\abs{\Hat{x}})}\,,\\[3pt]
\babs{\Lyap^n(\Hat{x} + n^{-\beta}z + n^{-\beta}h)
- \Lyap^n(\Hat{x} + n^{-\beta}h) - \Lyap^n(\Hat{x}+ n^{-\beta}z) + \Lyap^n(\Hat{x})}
&\,\le\, C m_0^2
\frac{1+\Lyap^n(\Hat{x})}{n^{2\beta} (1+\abs{\Hat{x}}^2)}
\end{aligned}
\end{equation}
for all $h,z\in B_{m_0}$, $\Hat{x}\in\widehat\fX^n$, and $n\in\NN$.
Hence, using \cref{PT2.1E} together with 
the estimates in \cref{PT2.1B,PT2.1F,PT2.1G,PT2.1H},
\cref{H1}\,(a) and (b), we obtain
\begin{equation}\label{PT2.1I}
\begin{aligned}
\cL^n_k \widetilde{\Lyap}^n(\Hat{x},k) &\,\le\,
N_0C_0C m_0\sum_{k,k'\in\cK}\abs{\cT_{k\ell}}\biggl(
2(n^{\nicefrac{\alpha}{2}} + m_0)(n^{\nicefrac{\alpha}{2}}+n^{\beta}\abs{\Hat{x}} + n)
\frac{1+\Lyap^n(\Hat{x})}{n^{\alpha+\beta} (1+\abs{\Hat{x}})}\\
&\mspace{40mu}+
N_0C_0m_0 (n^{\nicefrac{\alpha}{2}}+n^{\beta}\abs{\Hat{x}} + n)
(n^{\nicefrac{\alpha}{2}}+n^{\beta}\abs{\Hat{x}} + m_0 + n)
\frac{1+\Lyap^n(\Hat{x})}{n^{\alpha+2\beta} (1+\abs{\Hat{x}}^2)}\biggr)\,.
\end{aligned}
\end{equation}
Using the property
$\beta=\max\{\nicefrac{1}{2},1 - \nicefrac{\alpha}{2}\}$, we deduce from \cref{PT2.1I}
that for any $\epsilon>0$ there exists some constant $C_\circ(\epsilon)$ such that
\begin{equation}\label{PT2.1J}
\cL^n_k \widetilde{\Lyap}^n(\Hat{x},k)
\,\le\, C_\circ(\epsilon) + \epsilon \Lyap^n(\Hat{x})
\qquad\forall\,(\Hat{x},k)\in\widehat{\fX}^n\times\cK\,,\ \forall n\in\NN\,.
\end{equation}
Therefore, choosing $\epsilon=\frac{1}{2}\overline{C}_2$,
and using \cref{ET2.1B,ET2.1C,PT2.1D,PT2.1J}, we obtain
\begin{equation*}
\widehat{\cL}^n \widehat{\Lyap}^n(\Hat{x},k) \,\le\, \overline{C}_1
+ C_\circ\bigl(\nicefrac{\overline{C}_2}{2})
+\frac{1}{6}\overline{C}_2 - \frac{1}{3}\overline{C}_2
\widehat{\Lyap}^n(\Hat{x},k) \qquad \forall\,(\Hat{x},k)\in\widehat{\fX}^n\times\cK\,,
\ \forall\, n>n_1\,.
\end{equation*}
This completes the proof.
\end{proof}

\smallskip

\begin{proof}[Proof of \cref{C2.1}]
Recall $\cG^n_k$ in \cref{EC2.1A}, 
and let $\Breve\cG^n_k \df \cG^n_k - \cL^n_k$.
Then, $\Breve\cG^n_k$ takes the form
$$
\Breve\cG^n_k f(\Hat{x},k) \,=\, \sum_{z\in\sZ^n} \Breve\psi^n_k(n^\beta\Hat{x} + x^n_*,z)
\bigl(f(\Hat{x}+n^{-\beta}z,k)-f(\Hat{x},k)\bigr)\,,
\quad (\Hat{x},k) \in \widehat{\fX}^n\times\cK\,,
$$
for $f\in C_b(\RR^d\times\cK)$,
where
$$\Breve{\psi}^n_k(x,z)\df 
\Bar{\psi}^n(x,z) - \psi^n_k(x,z)\,, \quad k\in\cK\,,
\quad (x,z)\in\fX^n\times\sZ^n\,.$$ 
Compare it to \cref{D4.1}. In analogy to \cref{PT2.1D},
we let
\begin{align*}
\cQ^n\widetilde{\Lyap}^n(\Hat{x},k) &\,=\, \Breve\cG^n_k\Lyap^n(\Hat{x})\,, \qquad
 (\Hat{x},k)\in\widehat{\fX}^{n}\times\cK\,, \\
\intertext{and get}
\widehat\cL^n \widehat{\Lyap}^n(\Hat{x},k) &\,=\, 
\cG^n_k\Lyap^n(\Hat{x})
+ \cL^n_k \widetilde{\Lyap}^n(\Hat{x},k)\,, 
\qquad  (\Hat{x},k)\in\widehat{\fX}^{n}\times\cK\,.
\end{align*}
In obtaining an estimate for $\cL^n_k \widetilde{\Lyap}^n(\Hat{x},k)$,
the proof is the same to that of \cref{T2.1} 
by replacing $\Breve r^n$ with $\Breve\psi^n$, and 
using \cref{A2.1B,A2.1C}.
Applying \cref{A2.1B,A2.1C} again, 
we may show \cref{ET2.1C}.
Then, the claim in \cref{ET2.1D} follows by \cref{EC2.1B}.
\end{proof}

\section{Proofs of \texorpdfstring{\cref{T2.2,C2.2}}{}}\label{S5}

We need to introduce some additional notation to facilitate
the proofs.
Recall the definitions of $\Bar{b}^n$, $\Bar{a}^n$, and $\Bar{\Gamma}^n$
 in \cref{E-sde,E-Sigma,E-bargamma},
respectively.
For $f\in C^2(\Rd)$ and $n\in\NN$,  let
\begin{equation}\label{E-Tg1}
\begin{aligned} 
\Tilde{g}^n_1[f](x,k) 
&\,\df\, \sum_{i\in\cI}\bigl(\Bar{b}^n_i(x) - 
\bigl(\widehat{\Xi}^n_i(x,k) - \widehat{\Xi}^n_i(0,k) \bigr)\bigr)\partial_i f(x) \\ 
&\mspace{180mu} 
+ \frac{1}{2}\sum_{i,j\in\cI}\bigl(\Bar{a}^n_{ij}(x)
- \Bar{\Gamma}^n_{ij}(x,k)\bigr)\partial_{ij}f(x)\,,
\end{aligned}
\end{equation}
and
\begin{equation}\label{E-Tg2}
\begin{aligned}
\Tilde{g}^n_2[f](x,k) 
&\,\df\, \frac{1}{n^{\alpha + 2\beta}}\sum_{z}\sum_{h\in\cK} 
\biggl(\sum_{l\in\cK}\pi_l\xi^n_z
(n^{\beta}x + {x}^n_{*},l)\Upsilon_{lh} \\
&\mspace{160mu} 
- \xi^n_z(n^{\beta}x + {x}^n_{*},k)\Upsilon_{kh}\biggr) 
\sum_{j\in\cI}\Xi^n_j({x}^n_{*},h)
\sum_{i\in\cI}z_i\partial_{ij} f(x)\,.
\end{aligned}
\end{equation}
It follows by the identity
$$
\sum_{k\in\cK}\Bigl(\sum_{l\in\cK}\pi_l\xi^n_z
(n^{\beta}x + {x}^n_{*},l)\Upsilon_{lh} 
- \xi^n_z(n^{\beta}x + {x}^n_{*},k)\Upsilon_{kh}\Bigr)\,\equiv\,0\,,
$$
that $\sum_{k\in\cK}\pi_k \Tilde{g}^n_2[f](x,k) \,=\, 0$.
It is clear that
$\sum_{k\in\cK}\pi_k \Tilde{g}^n_1[f](x,k) = 0$.
Recall the mapping $\cT$ in \cref{E-cT}. We define
\begin{equation}\label{E-g1g2}
g^n_{i}[f](x,k) \,\df\,  \frac{1}{n^{\alpha}}
\sum_{\ell\in\cK}\cT_{k\ell}\, \Tilde{g}^n_i[f](x,\ell)\,,\quad i=1,2\,,
\end{equation}
and thus
\begin{equation}\label{E-g1g2B}
\cQ^ng^n_i[f](x,k) \,=\, \Tilde{g}^n_i[f](x,k)\,,
\quad i=1,2\,.
\end{equation}

For $f\in C^2(\Rd)$ and $n\in\NN$, let 
\begin{equation}\label{E-g3}
g^n_{3}[f](x,k)\,\df\, \frac{1}{n^{\alpha+\beta}}
\sum_{h\in\cK}\sum_{j\in\cI}\Xi^n_j({x}^n_{*},h)
\Upsilon_{kh}\,\partial_jf(x)\,.
\end{equation}
Note that the function $g^n_3[f]$ corresponds to the covariance of the 
background Markov process $J^n$.
We let  $g^n[f]$ denote the sum of the above functions, that is,
\begin{equation}\label{E-g}
g^n[f](x,k) \,\df\, g^n_{1}[f](x,k)
+ g^n_{2}[f](x,k) + g^n_{3}[f](x,k)\,,
\quad  (x,k)\in\RR^d\times\cK\,.
\end{equation}

To keep the algebraic expressions in the proofs
 manageable, we adopt the notation introduced in the following definition.

\begin{definition}\label{D5.1}
We define the operators $[\cD^n_z]^{0}$ and $[\cD^n_z]^{1}_j$, $j\in\cI$, by
\begin{equation*}
\begin{aligned}
[\cD^n_z]^{0}f(x) &\,\df\, f(x+n^{-\beta}z) - f(x)
- n^{-\beta}\sum_{i\in\cI}z_i\partial_if(x)
- n^{-2\beta}\sum_{i,j\in\cI}z_iz_j\partial_{ij}f(x)\,,\\
[\cD^n_z]^{1}_jf(x) &\,\df\, \partial_jf(x+n^{-\beta}z) - \partial_jf(x)
- n^{-\beta}\sum_{i\in\cI}z_i\partial_{ij} f(x)\,, 
\end{aligned}
\end{equation*}
for  $f\in C^2(\RR^d)$ and $z\in\sZ^n$.
In addition, we define
\begin{equation*}
\begin{aligned}
\cR^n_1[f](\Hat{x},k) &\,\df\, \sum_{z}^{}\xi^n_{z}(n^{\beta}\Hat{x}
+ {x}^n_{*},k)\,[\cD^n_z]^{0}f(\Hat{x})\,,\\
\cR^n_2[f](\Hat{x}) &\,\df\, \frac{1}{2}\sum_{i,j\in\cI}\sum_{k\in\cK}
\pi_k\bigl(\Bar{\Gamma}^n_{ij}(\Hat{x},k) - \Bar{\Gamma}^n_{ij}(0,k)\bigr)
\partial_{ij}f(\Hat{x})\,,\\
\cR^n_3[f](\Hat{x},k) &\,\df\, \frac{1}{n^{\alpha+2\beta}}\sum_{i,j\in\cI}
\sum_{h\in\cK}\sum_{l\in\cK}
\bigl(\Xi^n_i({x}^n_* + n^{\beta}\Hat{x},l) - \Xi^n_i({x}^n_*,l)\bigr)
\Xi^n_j({x}^n_{*},h)\pi_l\Upsilon_{lh}\partial_{ij}f(\Hat{x})\,,\\
\cR^n_4[f](\Hat{x},k) &\,\df\, \frac{1}{n^{\alpha+\beta}}\sum_{z}\sum_{h\in\cK}
\xi^n_z(n^{\beta}\Hat{x} + {x}^n_{*},k)\Upsilon_{kh}
\sum_{j\in\cI}\Xi^n_j({x}^n_{*},h)[\cD^n_z]^{1}_jf(\Hat{x})\,,\\
\cR^n_5[f](\Hat{x},k) &\,\df\, \cL^n_k\,g^n_{1}[f](\Hat{x},k)\,,\\[3pt]
\cR^n_6[f](\Hat{x},k) &\,\df\, \cL^n_{k}\,g^n_{2}[f](\Hat{x},k)\,.
\end{aligned}
\end{equation*}
\end{definition}

The following lemma establishes a useful identity involving 
the generator of $(\widehat{X}^n,J^n)$ in \cref{E-HcLn}
and that of $\widehat{Y}^n$ in \cref{E-sAn} and the operators
$\cR^n_i$ in \cref{D5.1}.

\begin{lemma}\label{L5.1}
Under \cref{A2.2}\,\ttup{ii}, we have
\begin{equation}\label{EL5.1A}
\widehat\cL^n f(\Hat{x}) +
\widehat\cL^n g^n[f](\Hat{x},k) 
\,=\, \sA^n f(\Hat{x}) + \sum_{i=1}^6 \cR^n_i[f](\Hat{x},k)\,,
\quad (\Hat{x},k)\in\widehat{\fX}^n\times\cK\,,\ f\in C^2(\RR^d)\,.
\end{equation}
\end{lemma}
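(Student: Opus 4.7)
The plan is to expand both sides of the claimed identity and match term by term. The auxiliary functions $g^n_1$, $g^n_2$, $g^n_3$ are designed precisely so that the $k$-dependent fluctuations of $\widehat\cL^n f$ away from the averaged diffusion generator $\sA^n f$ are absorbed via Poisson-equation cancellations, with the six operators $\cR^n_1,\dotsc,\cR^n_6$ emerging as the leftover Taylor remainders and linearization errors.

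I would start by observing that $f$ depends only on $\Hat{x}$, so $\cQ^n f\equiv 0$ and $\widehat\cL^n f = \cL^n_k f$. Writing $f(\Hat{x}+n^{-\beta}z)-f(\Hat{x})$ via the $[\cD^n_z]^{0}$ identity and using $\sum_z z_i\xi^n_z=\Xi^n_i$ together with $\sum_z z_iz_j\xi^n_z=\Gamma^n_{ij}$, this yields a first-order drift piece $\sum_i\widehat{\Xi}^n_i(\Hat{x},k)\partial_i f$, a second-order piece involving $\Bar\Gamma^n(\Hat{x},k)$, and the remainder $\cR^n_1[f]$.

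Next I would decompose $\widehat\cL^n g^n[f]=\cQ^n g^n[f]+\cL^n_k g^n[f]$. The defining Poisson equation \cref{E-g1g2B} immediately gives $\cQ^n g^n_i[f]=\Tilde g^n_i[f]$ for $i=1,2$, while $\cL^n_k g^n_1[f]$ and $\cL^n_k g^n_2[f]$ are $\cR^n_5[f]$ and $\cR^n_6[f]$ by definition; so only the $g^n_3$ contributions require direct computation. For $\cQ^n g^n_3[f]$ I would invoke the deviation-matrix identity $Q\Upsilon=\Pi-I$ (which follows from $Q\Pi=\Pi Q=0$); combined with the centering condition \cref{EA2.2A} this produces $\cQ^n g^n_3[f](\Hat{x},k)=-\sum_j\widehat\Xi^n_j(0,k)\partial_j f(\Hat{x})$, which is exactly what is needed to lift the local drift $\widehat\Xi^n(\Hat{x},k)$ into the averaged drift $\Bar b^n(\Hat{x})$ once the first-order part of $\Tilde g^n_1[f]$ is added. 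For $\cL^n_k g^n_3[f]$ I would expand $\partial_j f(\Hat{x}+n^{-\beta}z)-\partial_j f(\Hat{x})$ using $[\cD^n_z]^{1}_j$, which peels off the remainder $\cR^n_4[f]$ and leaves a principal second-order term in $\Xi^n_i(n^\beta\Hat{x}+x^n_*,k)\,\Upsilon_{kh}\,\Xi^n_j(x^n_*,h)$.

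The combinatorial heart of the argument, and the step I expect to be the main obstacle, is the cancellation that occurs when the principal part of $\cL^n_k g^n_3[f]$ is added to $\Tilde g^n_2[f]$: the $k$-indexed pieces (those without the $\pi_\ell$ factor) cancel exactly, leaving the symmetric bilinear form
\begin{equation*}
\frac{1}{n^{\alpha+2\beta}}\sum_{i,j\in\cI}\partial_{ij}f(\Hat{x})\sum_{\ell,h\in\cK}\pi_\ell\,\Xi^n_i(n^\beta\Hat{x}+x^n_*,\ell)\,\Upsilon_{\ell h}\,\Xi^n_j(x^n_*,h)\,.
\end{equation*}
Splitting $\Xi^n_i(n^\beta\Hat{x}+x^n_*,\ell)$ at $\Hat{x}=0$, the constant piece is precisely $\tfrac12\sum_{ij}\Theta^n_{ij}\partial_{ij}f$ by \cref{ES2.2B}, while the $\Hat{x}$-dependent piece is $\cR^n_3[f]$ by \cref{D5.1}. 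It then remains to assemble: the $\partial_i f$ coefficients from $\cL^n_k f$, the drift part of $\Tilde g^n_1[f]$, and $\cQ^n g^n_3[f]$ telescope to $\Bar b^n_i(\Hat{x})$; for $\partial_{ij}f$, combining the $\Bar\Gamma^n_{ij}(\Hat{x},k)$ piece with the diffusion part of $\Tilde g^n_1[f]$ produces $\Bar a^n_{ij}(\Hat{x})$, which splits at $\Hat{x}=0$ as $\Bar a^n_{ij}(0)$ plus the remainder $\cR^n_2[f]$; adding the $\Theta^n_{ij}$ piece completes the diffusion coefficient $\Sigma^n_{ij}=\Bar a^n_{ij}(0)+\Theta^n_{ij}$. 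The right-hand side is thereby recovered as $\sA^n f+\sum_{i=1}^{6}\cR^n_i[f]$, as claimed.
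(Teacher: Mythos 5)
Your proposal is correct and follows essentially the same route as the paper: decompose $\widehat\cL^n g^n[f]$ into $\cQ^n g^n_i[f]+\cL^n_kg^n_i[f]$, use the Poisson identity $\cQ^ng^n_i[f]=\Tilde g^n_i[f]$ for $i=1,2$, compute $\cQ^n g^n_3[f]$ from $Q\Upsilon=\Pi-I$ together with the centering condition \cref{EA2.2A}, expand $\cL^n_kf$ and $\cL^n_kg^n_3[f]$ via the $[\cD^n_z]^0$ and $[\cD^n_z]^1_j$ identities, and assemble. The only place you go slightly beyond the paper is in spelling out the cancellation of the $k$-indexed (non-$\pi$) pieces when $\cL^n_kg^n_3[f]$ is added to $\Tilde g^n_2[f]$ and the subsequent split at $\Hat{x}=0$ producing $\tfrac12\Theta^n_{ij}\partial_{ij}f+\cR^n_3[f]$; the paper compresses that step into ``combining these identities with \cref{E-Tg2,E-g1g2B}.'' This is the same computation, just made explicit.
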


\begin{proof}
By \cref{E-HcLn} we have
\begin{equation}\label{PL5.1A}
\widehat\cL^n g^n[f](\Hat{x},k) \,=\,
\sum_{i=1}^3\Bigl(\cL^n_kg^n_{i}[f](\Hat{x},k)
+ \cQ^ng^n_{i}[f](\Hat{x},k)\Bigr)\,,
\end{equation}
and $\widehat\cL^nf(\Hat{x}) = \cL^n_k f(\Hat{x})$ for any $f\in C^2(\RR^d)$.

We first show that 
\begin{equation}\label{PL5.1B}
\begin{aligned}
&\cL^n_kf(\Hat{x}) + \cQ^ng^n_{1}[f](\Hat{x},k)
+ \cQ^ng^n_{3}[f](\Hat{x},k) \\
&\mspace{100mu}\,=\, \sum_{i\in\cI}\Bar{b}^n_i(\Hat{x}) \partial_if(\Hat{x}) 
+ \frac{1}{2}\sum_{i,j\in\cI}\Bar{a}^n_{ij}\partial_{ij}f(\Hat{x})
+ \cR^n_1[f](\Hat{x},k) + \cR^n_2[f](\Hat{x})\,.
\end{aligned}
\end{equation}
Using \cref{E-cL,E-g3}, we obtain
\begin{equation}\label{PL5.1C}
\cQ^ng^n_{3}[f](\Hat{x},k) \,=\, 
\sum_{h\in\cK}\sum_{\ell\in\cK} q_{k\ell}\Upsilon_{\ell h}
\sum_{j\in\cI}\frac{\Xi^n_j({x}^n_*,h)}{n^\beta}\partial_jf(\Hat{x})\,.
\end{equation}
Since $Q\Upsilon = \Pi - I$, 
where $I$ denotes the identity matrix, it follows by \cref{EA2.2A} that
\begin{equation}\label{PL5.1D}
\sum_{h\in\cK}\sum_{\ell\in\cK}q_{k\ell}\Upsilon_{\ell h}\Xi^n_j({x}^n_{*},h)
\,=\, \sum_{h\in\cK}\pi_h\Xi^n_j({x}^n_*,h) - \Xi^n_j({x}^n_*,k)
\,=\, - \Xi^n_j({x}^n_*,k)\,,
\end{equation}
where in the second equality we use \cref{A2.2}\,(ii).
Thus, by \cref{PL5.1C,PL5.1D}, we have
\begin{equation}\label{PL5.1E}
\cQ^ng^n_{3}[f](\Hat{x},k) \,=\,
\sum_{j\in\cI} -\frac{\Xi^n_j({x}^n_*,k)}{n^{\beta}}\,\partial_jf(\Hat{x}) 
\,=\,\sum_{j\in\cI} -\widehat{\Xi}^n_j(0,k)\,\partial_jf(\Hat{x})\,.
\end{equation}
By \cref{E-cL} and a standard identity, we obtain
\begin{equation}\label{PL5.1F}
\begin{aligned}
\cL^n_kf(\Hat{x}) &\,=\, \sum_{z\in\sZ^n} 
\xi^n_z(n^{\beta}\Hat{x} 
+ x^n_*,k)\biggl(\sum_{i\in\cI}n^{-\beta}z_i\partial_i f(\Hat{x})  
+ \sum_{i,j\in\cI}n^{-2\beta}z_iz_j\partial_{ij}f(\Hat{x})
+ [\cD^n_z]^0 f(\Hat{x})\biggr) 
\\
&\,=\, \sum_{i\in\cI}\widehat{\Xi}^n_i(\Hat{x},k)\partial_if(\Hat{x}) + 
\sum_{i,j\in\cI} \Bar{\Gamma}^n_{ij}(\Hat{x},k) \partial_{ij}f(\Hat{x})
+ \cR^n_1[f](\Hat{x},k)\,.
\end{aligned}
\end{equation}
Thus \cref{PL5.1B} follows from \cref{E-Tg1,E-g1g2B,PL5.1E,PL5.1F}.

Next, we show that
\begin{equation}\label{PL5.1G}
\cL^n_k\,g^n_{3}[f](\Hat{x},k) + \cQ^ng^n_{2}[f](\Hat{x},k) \,=\, 
\frac{1}{2}\sum_{i,j\in\cI}\theta^n_{ij}\partial_{ij}f(\Hat{x}) 
+ \cR^n_3[f](\Hat{x},k) + \cR^n_4[f](\Hat{x},k)\,. 
\end{equation}
We have
$$
\cL^n_k\,g^n_3[f](\Hat{x},k) \,=\, \frac{1}{n^{\alpha + \beta}}
\sum_{z}\xi^n_{z}(n^{\beta}\Hat{x} + {x}^n_{*},k)
\sum_{h\in\cK}\sum_{j\in\cI}\Xi^n_j({x}^n_{*},h)\Upsilon_{kh}\,
\bigl(\partial_jf(\Hat{x} + n^{-\beta}z) - \partial_jf(\Hat{x})\bigr)
$$
by \cref{E-cL}.
It is clear that
$$
\partial_jf(\Hat{x}+n^{-\beta}z) - \partial_jf(\Hat{x}) 
\,=\, n^{-\beta}\sum_{i\in\cI}z_i\partial_{ij} f(\Hat{x}) +
[\cD^n_z]^{1}_jf(\Hat{x})\,,
$$
and
$$
\sum_{z}z_i\xi^n_{z}(n^{\beta}\Hat{x} + {x}^n_{*},k) \,=\, 
\Xi^n_i({x}^n_*,k) 
+ \bigl(\Xi^n_i({x}^n_* + n^{\beta}\Hat{x},k) - \Xi^n_i({x}^n_*,k)\bigr)\,.
$$
Therefore, \cref{PL5.1G} follows by combining these identities with \cref{E-Tg2,E-g1g2B}.

Hence, we obtain \cref{EL5.1A} by
adding \cref{PL5.1A,PL5.1B,PL5.1G}, and using the definitions
of $\cR^n_i[f]$ for $i=5,6$.
This completes the proof.
\end{proof}

The estimates for $\cR^n_i$, $i=1,2,3,4$, are discussed in \cref{L5.4}.
The following lemma provides needed estimates for $\cR^n_5$ and $\cR^n_6$.

\begin{lemma}\label{L5.2}
Under \cref{A2.2}\,\ttup{i}--\ttup{iii}, there exists some positive constant
$C$ such that
\begin{equation}\label{EL5.2A}
\begin{aligned}
\babs{\cR^n_5[f](\Hat{x},k)} &\,\le\,  
C \biggl[\biggl(\frac{1}{n^{\alpha}}\abs{\Hat{x}}
+ \frac{1}{n^{\alpha+\beta -1}} \biggr) \abs{\grad f(\Hat{x})} 
+ \biggl(\frac{1}{n^{\alpha+\beta}}\abs{\Hat{x}} + \frac{1}{n^{\alpha+2\beta-1}} \biggr)
\babs{\grad^2 f(\Hat{x})}
\\
&\quad + \biggl(\frac{1}{n^{\alpha - \beta}}\abs{\Hat{x}}^2
+ \frac{1}{n^{\alpha-1}}\abs{\Hat{x}}\biggr)
\,\max_{z\in\sZ^n}\bigl\{
\abs{\grad f(\Hat{x} + n^{-\beta}z) - \grad f(\Hat{x})}\bigr\}\\
&\quad + \biggl(\frac{1}{n^{\alpha}}\abs{\Hat{x}}^2
+ \frac{1}{n^{\alpha+\beta-1}}\abs{\Hat{x}} 
+ \frac{1}{n^{\alpha + 2\beta - 2}}\biggr)
\,\max_{z\in\sZ^n}
\babs{ \grad^2 f(\Hat{x}+n^{-\beta}z) - \grad^2 f(\Hat{x})}\bigr\}
\biggr]\,,
\end{aligned}
\end{equation}
and 
\begin{equation}\label{EL5.2B}
\begin{aligned}
\babs{\cR^n_6[f](\Hat{x},k)} &\,\le\, C\biggl[
\biggl(\frac{1}{n^{2\alpha+\beta -1}}\abs{\Hat{x}}
+ \frac{1}{n^{2\alpha+2\beta-2}}\biggr)\,
\babs{\grad^2 f(\Hat{x})} \\
&\mspace{40mu} + 
\biggl(\frac{1}{n^{2\alpha - 1}}\abs{\Hat{x}}^2
+ \frac{1}{n^{2\alpha + \beta -2}}\abs{\Hat{x}} \\
&\mspace{200mu}
+ \frac{1}{n^{2\alpha + 2\beta - 3}}\biggr)
\max_{z\in\sZ^n}\,
\babs{\grad^2 f(\Hat{x}+n^{-\beta}z) - \grad^2 f(\Hat{x})}\biggr]\,, 
\end{aligned}
\end{equation}
for any $(\Hat{x},k)\in\widehat{\fX}^n\times\cK$.
\end{lemma}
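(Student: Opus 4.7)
The plan is to bound $\cR^n_5$ and $\cR^n_6$ directly by unwrapping the jump form of $\cL^n_k$ from \cref{E-cL} applied to $g^n_1[f]$ and $g^n_2[f]$, and to reduce everything via a product-rule identity to products of coefficient increments and derivative increments of $f$. Throughout, I rely on three standing tools: the rate bound $\bigl|\Tilde{r}^n_k(n^\beta \Hat x + x^n_*, z)\bigr| \le C(n^{\nicefrac{\alpha}{2}} + n^\beta |\Hat x| + n)$ from \cref{H1}\,(d); the uniform Lipschitz property \cref{EA2.2B} of $\xi^n_z$ in its first argument, together with the bound $|\xi^n_z(x^n_*, k)| \le \widetilde C n$ from \cref{EA2.2C} (whence $|\Xi^n_j(x^n_*, h)| \le C n$ and $|\xi^n_z(n^\beta \Hat x + x^n_*, k)| \le C(n + n^\beta |\Hat x|)$); and the fact that the number of jumps is uniformly bounded by $\widetilde N_0$ with $|z| \le m_0$.

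For $\cR^n_5$, first unwrap $g^n_1[f]$ from \cref{E-Tg1,E-g1g2} as a linear combination, with weights $\cT_{k\ell}/n^\alpha$, of terms $A^\ell_i(\Hat x)\,\partial_i f(\Hat x) + B^\ell_{ij}(\Hat x)\,\partial_{ij} f(\Hat x)$, where
$$
A^\ell_i(\Hat x) \df \Bar{b}^n_i(\Hat x) - \widehat\Xi^n_i(\Hat x,\ell) + \widehat\Xi^n_i(0,\ell),\qquad
B^\ell_{ij}(\Hat x) \df \tfrac12\bigl(\Bar{a}^n_{ij}(\Hat x) - \Bar\Gamma^n_{ij}(\Hat x,\ell)\bigr).
$$
A short computation using \cref{EA2.2A} (which yields $\sum_k \pi_k \widehat\Xi^n_i(0,k)=0$) and \cref{EA2.2B} gives $A^\ell_i(0)=0$, $|A^\ell_i(\Hat x)| \le C|\Hat x|$, and $|A^\ell_i(\Hat x+n^{-\beta}z) - A^\ell_i(\Hat x)| \le C n^{-\beta}|z|$, while \cref{EA2.2B,EA2.2C} give $|B^\ell_{ij}(\Hat x)| \le C(n^{1-2\beta} + n^{-\beta}|\Hat x|)$ and $|B^\ell_{ij}(\Hat x+n^{-\beta}z)-B^\ell_{ij}(\Hat x)| \le C n^{-2\beta}|z|$. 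Applying $\cL^n_k$ term by term and using the product identity
$$
P_1 Q_1 - P_0 Q_0 = (P_1-P_0)Q_0 + P_0(Q_1-Q_0) + (P_1-P_0)(Q_1-Q_0)
$$
with $P = A^\ell_i$ (resp.\ $B^\ell_{ij}$) and $Q = \partial_i f$ (resp.\ $\partial_{ij} f$) produces, for each of the two pieces, three contributions involving $|\grad f(\Hat x)|$, $|\grad f(\Hat x + n^{-\beta}z) - \grad f(\Hat x)|$, and the analogous quantities for $\grad^2 f$. Multiplying by the $n^{-\alpha}$ prefactor of $g^n_1$, the rate bound $C(n^{\nicefrac{\alpha}{2}}+n^\beta|\Hat x|+n)$, and absorbing the $n^{\nicefrac{\alpha}{2}}$ contribution via $\beta = \max\{\nicefrac12, 1-\nicefrac{\alpha}{2}\}$ into the $n^\beta |\Hat x|$ and $n$ contributions, and distributing yields exactly the four groups of terms displayed in \cref{EL5.2A}.

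For $\cR^n_6 = \cL^n_k g^n_2[f]$ the scheme is identical but the overall prefactor is $1/n^{2\alpha+2\beta}$ and the coefficient of $\partial_{ij} f$ is
$$
\Bigl(\sum_{m\in\cK}\pi_m \xi^n_z(n^\beta \Hat x + x^n_*, m)\Upsilon_{mh} - \xi^n_z(n^\beta \Hat x + x^n_*, \ell)\Upsilon_{\ell h}\Bigr)\,\Xi^n_j(x^n_*, h)\, z_i,
$$
which by \cref{EA2.2B,EA2.2C} is bounded by $C n(n + n^\beta|\Hat x|)$ and whose increment under $\Hat x \mapsto \Hat x + n^{-\beta}z'$ is bounded by $C n \cdot n^{-\beta}|z'|$. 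The same product-rule decomposition (now only involving $\grad^2 f$, since $g^n_2[f]$ depends solely on $\partial_{ij} f$) combined with the $\Tilde r^n_k$ growth bound and the $1/n^{2\alpha+2\beta}$ prefactor produces the three-term bound \cref{EL5.2B}. The absence of $|\grad f|$ and of a pure $|\grad^2 f|$--at--$\Hat x$ contribution is forced by the structure of $g^n_2[f]$ together with the presence of a $z$ in the coefficient, which is consumed by the $P_1-P_0$ factor in the increment.

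The main obstacle is bookkeeping: one must carefully cross-multiply the prefactor $1/n^\alpha$ (respectively $1/n^{2\alpha+2\beta}$) with the rate growth $n^{\nicefrac{\alpha}{2}}+n^\beta|\Hat x|+n$, the Lipschitz factor $n^{-\beta}$, and the bound on the coefficient itself, then verify that every resulting power of $n$ (there are roughly a dozen after all cross-multiplications) is dominated by one of the exponents listed in \cref{EL5.2A,EL5.2B}. The key structural simplification is the identity $\beta = \max\{\nicefrac12, 1-\nicefrac{\alpha}{2}\}$, which guarantees $\alpha/2+\beta \ge 1$ and thus lets the $n^{\nicefrac{\alpha}{2}}$ contributions be absorbed into the $n$ contributions; all remaining manipulations are routine triangle-inequality estimates.
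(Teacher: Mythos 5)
Your strategy is essentially the paper's: bound the coefficients of $g^n_1[f]$ and $g^n_2[f]$ and their increments via \cref{EA2.2A,EA2.2B,EA2.2C}, expand $\cL^n_k$ over the (at most $\widetilde N_0$, size-$\le m_0$) jumps, and control each summand by the product-rule decomposition (increment of coefficient)$\times$(derivative) plus (coefficient)$\times$(increment of derivative); your bounds $\abs{A^\ell_i(\Hat x)}\le C\abs{\Hat x}$, $\abs{B^\ell_{ij}(\Hat x)}\le C(n^{1-2\beta}+n^{-\beta}\abs{\Hat x})$ and the increment bounds $Cn^{-\beta}$, $Cn^{-2\beta}$ coincide with the estimates the paper derives before its final display. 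The first genuine problem is the rate bound you multiply by in the $\cR^n_5$ step: you use the \cref{H1}\,(d) bound $C(n^{\nicefrac{\alpha}{2}}+n^{\beta}\abs{\Hat x}+n)$ and claim the $n^{\nicefrac{\alpha}{2}}$ term is absorbed because $\nicefrac{\alpha}{2}+\beta\ge 1$. That inequality does not give $n^{\nicefrac{\alpha}{2}}\le n$; absorption into the $n$ term requires $\alpha\le 2$, while the lemma is claimed for all $\alpha>0$. For $\alpha>2$ your argument as written produces, e.g., a contribution of order $n^{-(\nicefrac{\alpha}{2}+\beta)}\abs{\grad f(\Hat x)}$, which exceeds the allowed $n^{-(\alpha+\beta-1)}\abs{\grad f(\Hat x)}$, so \cref{EL5.2A} with the stated exponents is not obtained in that range. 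The fix is already in your standing tools (and is what the paper uses): \cref{EA2.2B,EA2.2C} give $\abs{\xi^n_z(n^{\beta}\Hat x+x^n_*,k)}\le C(n^{\beta}\abs{\Hat x}+n)$ with no $n^{\nicefrac{\alpha}{2}}$ term, and with that bound the cross-multiplication yields exactly the four groups in \cref{EL5.2A} for every $\alpha>0$.

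The second issue is in the $\cR^n_6$ part: you bound the increment of the coefficient under $\Hat x\mapsto\Hat x+n^{-\beta}z'$ by $Cn\cdot n^{-\beta}\abs{z'}$. Since \cref{EA2.2B} is Lipschitz in the \emph{unscaled} variable, this shift moves the argument of $\xi^n_z$ by $z'$ itself, so the $\Upsilon$-weighted $\xi$-difference changes by $O(\abs{z'})=O(1)$, and after multiplying by $\Xi^n_j(x^n_*,h)z_i=O(n)$ the coefficient's increment is $O(n)$, not $O(n^{1-\beta})$. The correct $O(n)$ increment is precisely what produces the group $\bigl(n^{-(2\alpha+\beta-1)}\abs{\Hat x}+n^{-(2\alpha+2\beta-2)}\bigr)\abs{\grad^2 f(\Hat x)}$ in \cref{EL5.2B}; with your overstated increment the exponents you would actually derive differ from (are stronger than) the displayed ones and are not justified. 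Both points are local slips rather than structural flaws: redoing the bookkeeping with the corrected rate bound and coefficient increment recovers \cref{EL5.2A,EL5.2B} exactly along the route you describe, which is the same route as the paper's.
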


\begin{proof}
Recall the functions $g^n_1[f]$ and $g^n_{2}[f]$ in \cref{E-g1g2}.
It follows by \cref{EA2.2B,EA2.2C} that
\begin{align}\label{PL5.2A}
\abs{\xi^n_z(x, k)} \,\le\,
\widetilde{C}(\abs{x - x^n_*} + n)\,, 
\intertext{and}
\abs{\xi^n_z(x+x^n_*,k) - \xi^n_z(x^n_*,k)} \,\le\, \widetilde{C}\abs{x}\,,
\label{PL5.2B}
\end{align}
for $(x,k)\in\RR^d\times\cK$, $z\in\sZ^n$ and $n\in\NN$.
By \cref{A2.2} (i), and applying \cref{EA2.2A,PL5.2B}, 
it is straightforward to verify that
\begin{equation}\label{PL5.2C}
\Biggl| \sum_{k\in\cK} \pi_k \widehat\Xi^n(\Hat{x},k) \Biggr|
\,\le\, \widetilde{C}\widetilde{N}_0m_0\abs{\Hat{x}} 
\qquad \forall\,\Hat{x}\in\Rd\,.
\end{equation}
Thus, by \cref{PL5.2C,PL5.2B}, we have
\begin{equation}\label{PL5.2D}
\babs{\Bar{b}^n(\Hat{x}) - \bigl(\widehat{\Xi}^n(\Hat{x},k) 
- \widehat{\Xi}^n(0,k)\bigr)}
\,\le\, 2\widetilde{C}\widetilde{N}_0m_0\abs{\Hat{x}}
\qquad \forall\,(\Hat{x},k)\in\RR^d\times\cK\,.
\end{equation}
Applying \cref{PL5.2A}, we obtain
\begin{equation}\label{PL5.2E}
\babs{\Bar{a}^n(\Hat{x}) - \Bar{\Gamma}^n(\Hat{x},k)} 
\,\le\, 2\widetilde{C}\widetilde{N}_0m_0^2
\bigl(n^{-\beta}\abs{\Hat{x}} + n^{1- 2\beta}\bigr) 
\qquad \forall (\Hat{x},k)\in\RR^d\times\cK\,,
\end{equation}
and
\begin{equation}\label{PL5.2F}
\Biggl| \sum_{l\in\cK}\pi_l\xi^n_z
(n^{\beta}\Hat{x} + {x}^n_{*},l)\Upsilon_{lh}
- \xi^n_z(n^{\beta}\Hat{x} + {x}^n_{*},k)\Upsilon_{kh} \Biggr|
\,\le\, C_1\bigl(n^{\beta}\abs{\Hat{x}} + n\bigr) \qquad 
\forall\, \Hat{x}\in\RR^d\,,
\end{equation}
and all $k,h\in\cK$ and $z\in\sZ^n$, for some positive constant $C_1$.
We have 
\begin{equation}\label{PL5.2G}
\abs{\Xi^n(x^n_*,k)} \,\le\, \widetilde{C}\widetilde{N}_0m_0n
\qquad \forall\,k\in\cK\,,\ n\in\NN\,,
\end{equation}
by \cref{EA2.2C},
and 
\begin{equation}\label{PL5.2H}
\abs{\xi^n_z(n^{\beta}\Hat{x} + x^n_*,k)} \,\le\, 
\widetilde{C}(n^{\beta}\abs{\Hat{x}} +n) \qquad 
\forall\,(\Hat{x},k) \in\RR^d\times\cK\,,\ z\in\sZ^n\,,\ n\in\NN\,,
\end{equation}
by \cref{PL5.2A}.
From \cref{EA2.2B}, we obtain
\begin{equation}\label{PL5.2I}
\babs{\widehat{\Xi}^n(\Hat{x} + n^{-\beta}z,k) -  \widehat{\Xi}^n(\Hat{x},k)}
\,\le\, n^{-\beta}\widetilde{C}\widetilde{N}_0m_0^2\,,
\end{equation}
and 
\begin{equation}\label{PL5.2J}
\babs{\Bar{\Gamma}^n(\Hat{x} + n^{-\beta}z,k) - \Bar{\Gamma}^n(\Hat{x},k)}
\,\le\, n^{-2\beta}\widetilde{C}\widetilde{N}_0m_0^3\,,
\end{equation}
for $(\Hat{x},k)\in\RR^d\times\cK$, $z\in\sZ^n$, and $n\in\NN$.
Repeating similar calculations as in \cref{PT2.1F,PT2.1I},
and applying \cref{PL5.2D,PL5.2E,PL5.2H,PL5.2I,PL5.2J},
we have
\begin{equation*}
\begin{aligned}
\babs{\cR^n_5[f](\Hat{x}),k} &\,\le\, 
\widetilde{C}\widetilde{N}_0m_0
\sum_{k,\ell\in\cK}\abs{\cT_{k\ell}}\biggl( 
2\widetilde{C}\widetilde{N}_0m_0^2
\frac{(n^{\beta}\abs{\Hat{x}} +n)}{n^{\alpha+\beta}} 
\abs{\grad f(\Hat{x})}\\
&\mspace{20mu} + 2\widetilde{C}\widetilde{N}_0m_0\abs{\Hat{x}}
\frac{(n^{\beta}\abs{\Hat{x}} +n)}{n^{\alpha}}\max_{z\in\sZ^n}\bigl\{
\abs{\grad f(\Hat{x} + n^{-\beta}z) - \grad f(\Hat{x})}\bigr\}\\
&\mspace{20mu} + \widetilde{C}\widetilde{N}_0m_0^3
\frac{(n^{\beta}\abs{\Hat{x}} +n)}{n^{\alpha+2\beta}} \abs{\grad^2 f(\Hat{x})}\\
&\mspace{20mu} + 2\widetilde{C}\widetilde{N}_0m_0^2
\frac{\bigl(n^{-\beta}\abs{\Hat{x}} + n^{1- 2\beta}\bigr)(n^{\beta}\abs{\Hat{x}} +n)}
{n^{\alpha}}\max_{z\in\sZ^n}\bigl\{
\abs{\grad f(\Hat{x} + n^{-\beta}z) - \grad f(\Hat{x})}\bigr\}
\biggr)\,,
\end{aligned} 
\end{equation*}
which establishes \cref{EL5.2A}.
The estimate for $\cR^n_6$ in \cref{EL5.2B}  obtained in a similar manner
by applying \cref{PL5.2F,PL5.2G,EA2.2B,PL5.2H}.
This completes the proof.
\end{proof}
 
We borrow the following estimates for solutions
to the Poisson equation for the operator $\cA^n$ from
\cite[Theorem~4.1]{Gurvich14} and the discussion following this theorem.
Recall that  $\nu^n$ is the steady-state distribution of $\Hat{Y}^n$ in \cref{E-sde}.

\begin{lemma}\label{L5.3}
Grant  \cref{A2.2}, and fix a function $\sV$ in \cref{A2.3}.
Let $f\in C^{0,1}(\RR^d)$ be such that $\norm{f}_{C^{0,1}(\sB_x)} \le \sV(x)$
and $\nu^n(f) = 0$.
Then, the function $u^n_f\in C^2(\RR^d)$ defined by
\begin{equation*}
u^n_f(x) \,\df\, \int_0^{\infty} \Exp_x\bigl[f\bigl(\widehat{Y}^n(s)\bigr)\bigr]\,\D{s} 
\end{equation*}
is the unique  (up to an additive constant) solution to the Poisson equation
\begin{equation}\label{EL5.3A}
\sA^n u = -f\,.
\end{equation}
and satisfies
\begin{equation}\label{EL5.3B}
\abs{\grad u^n_f(x)} \,\in\, \order\bigl((1 + \abs{x})\sV(x)\bigr)\,,
\qquad \babs{\grad^2 u^n_f(x)} \,\in\, 
\order\bigl((1 + \abs{x}^2)\sV(x)\bigr)\,, 
\end{equation}
and
\begin{equation}\label{EL5.3C}
\bigl[u^n_f\bigr]_{2,1;B_{\frac{m_0}{\sqrt{n}}}(x)} \,\in\,
\order\bigl((1 + \abs{x}^3)\sV(x) \bigr)\,.
\end{equation}
\end{lemma}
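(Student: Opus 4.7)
The plan is to verify the hypotheses of \cite[Theorem~4.1]{Gurvich14} for our sequence of diffusions $\widehat{Y}^n$ and invoke it, paying close attention to uniformity in $n$. First I would check that the SDE coefficients in \cref{E-sde} satisfy the standing requirements of Gurvich's framework: the drift $\Bar{b}^n$ is globally Lipschitz with a constant independent of $n$ by \cref{EA2.2B} (since $\widehat{\Xi}^n(\cdot,k)$ is obtained from $\xi^n_z(\cdot,k)$ through a finite sum of at most $\widetilde{N}_0$ terms, with $|z|\le m_0$), and the covariance $\Sigma^n$ in \cref{E-Sigma} is constant in $x$, bounded uniformly in $n$ by \cref{EA2.2B,EA2.2C}, and uniformly positive definite, because the limit $\Bar{a}(0)+\Theta$ of $\Sigma^n$ is positive definite by \cref{EA2.2D} and the assumed positive definiteness of $\Bar\Gamma(k)$ in \cref{A2.2}\,(iv).

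Second, \cref{A2.3} furnishes a sub-exponential norm-like Lyapunov function $\sV$ satisfying a Foster--Lyapunov inequality $\sA^n\sV\le \Ind_\sB-\kappa\sV$ with constants independent of $n$. This yields uniform exponential ergodicity for $\widehat{Y}^n$ in the $\sV$-weighted norm, so that $|\Exp_x[f(\widehat{Y}^n(s))]|$ decays exponentially for any $f$ with $\|f\|_{C^{0,1}(\sB_x)}\le \sV(x)$ and $\nu^n(f)=0$. Consequently the integral defining $u^n_f$ converges absolutely and uniformly on compact sets, and standard semigroup arguments (dominated convergence together with Dynkin's formula applied to $\widehat{Y}^n$ stopped at exit times from balls) show that $u^n_f\in C^2(\RR^d)$ solves the Poisson equation \cref{EL5.3A}; uniqueness up to a constant follows from the ergodic theorem applied to any bounded classical solution.

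Third, for the gradient and Hessian bounds \cref{EL5.3B}, I would invoke the interior Schauder estimates used by Gurvich: one writes $u^n_f$ locally as the solution of a linear elliptic PDE with uniformly elliptic and uniformly Lipschitz coefficients, and applies the estimate on balls $\sB_x$ whose radius shrinks like $(1+|x|)^{-1}$. The growth $(1+|x|)\sV(x)$ and $(1+|x|^2)\sV(x)$ arises from combining the local Schauder inequality with the a priori bound $|u^n_f(x)|\le c\sV(x)$ coming from the exponential ergodicity, rescaling to unit balls, and re-expressing the resulting constants in terms of the chosen radius. The Hölder-type estimate \cref{EL5.3C} is obtained by a further Schauder step for the second derivatives, this time on the fixed-radius ball $B_{m_0/\sqrt{n}}(x)$; the extra factor $(1+|x|^3)$ comes from the $|x|$-dependence of both the local PDE coefficients and of $\sV$'s local $C^{0,1}$-norm on $\sB_x$.

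The main obstacle in the proof is exactly the uniformity in $n$: the natural elliptic estimates depend on the ellipticity constant, the Lipschitz norm of the coefficients, and the ratio between the radius of the local ball and the diameter of the domain. Verifying that all of these are bounded uniformly in $n$—in particular that $\Sigma^n$ stays bounded away from zero for every $n$ (not only in the limit) and that the Lyapunov-based a priori bound $|u^n_f|\le c\sV$ holds with a constant independent of $n$—is the delicate point. Once this is established, the explicit polynomial-times-$\sV$ growth in \cref{EL5.3B,EL5.3C} follows by the same rescaling argument as in \cite[Theorem~4.1]{Gurvich14}, which may then be cited verbatim.
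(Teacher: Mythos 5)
Your proposal follows the same route as the paper, which simply states the lemma as a citation of \cite[Theorem~4.1]{Gurvich14} and the discussion following it, without spelling out the verification of Gurvich's hypotheses. Your expansion of that verification is accurate: the drift is uniformly Lipschitz by \cref{EA2.2B}, the constant covariance $\Sigma^n$ is uniformly bounded and uniformly elliptic (one should note, a bit more carefully than you do, that each $\Sigma^n$ is positive definite by \cref{A2.2}\,(iv) and positive semidefiniteness of $\Theta^n$, with the limit supplying the uniform lower bound for large $n$), and \cref{A2.3} gives the $n$-uniform Foster--Lyapunov inequality required for the $\sV$-weighted exponential ergodicity that underlies Gurvich's derivative estimates.

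The only substantive gap is that Gurvich's Theorem~4.1 does not directly state the local Lipschitz estimate on second derivatives \cref{EL5.3C} over the $n$-dependent ball $B_{m_0/\sqrt{n}}(x)$; that is extracted from ``the discussion following this theorem'' in \cite{Gurvich14}, which works with interior Schauder estimates on $\sB_x$, and one must observe that for $n$ large $B_{m_0/\sqrt{n}}(x)\subset\sB_x$ so the localized $C^{2,1}$ seminorm inherits the claimed polynomial-times-$\sV$ growth. Your sketch of the Schauder rescaling does indicate this, but if you want a self-contained argument you should make that containment and the resulting rescaling factor explicit rather than citing ``verbatim''.
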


In the following lemma, we consider the solution of the Poisson equation in \cref{EL5.3A},
and establish an estimate for the residual terms
$\cR^n_i[u^n_f]$, $i=1,\dotsc,6$, in \cref{EL5.1A} given in \cref{D5.1}.

\begin{lemma}\label{L5.4}
Grant \cref{A2.2}, and fix a function $\sV$ in \cref{A2.3}. 
Let $f$ and $u^n_f$ be  as in \cref{L5.3}.
Then, 
\begin{equation}\label{EL5.4A}
\sum_{j=1}^{6}\cR^n_j[u^n_f](\Hat{x},k) \,=\, 
\order\biggl(\frac{1}{n^{\nicefrac{\alpha}{2}\wedge\nicefrac{1}{2}}}\biggr)
\order\bigl((1 + \abs{\Hat{x}}^5)\sV(\Hat{x})\bigr) 
\qquad \forall\,(\Hat{x},k)\in\widehat{\fX}^n\times\cK\,.
\end{equation}
\end{lemma}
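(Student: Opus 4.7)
The plan is to bound each of the six residuals separately using the gradient estimates from \cref{L5.3} applied to $u^n_f$, and then verify that every resulting prefactor of the form $n^{-\gamma}$ satisfies $\gamma \ge \nicefrac{\alpha}{2}\wedge\nicefrac{1}{2}$ by exploiting the identity $\beta=\max\{\nicefrac{1}{2},1-\nicefrac{\alpha}{2}\}$. Throughout, \cref{E-m0} restricts the jump set to $|z|\le m_0$, and since $\beta\ge\nicefrac{1}{2}$ we have $B_{m_0/n^{\beta}}(\Hat{x})\subset B_{m_0/\sqrt{n}}(\Hat{x})$, so the Lipschitz bound \cref{EL5.3C} on $\nabla^2 u^n_f$ applies to Taylor expansions at displacement $n^{-\beta}z$.

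For $\cR^n_1$, I would use Taylor's theorem with Lipschitz Hessian to get
$$\babs{[\cD^n_z]^0 u^n_f(\Hat{x})}\,\le\, C\,n^{-3\beta}\,[u^n_f]_{2,1;B_{m_0/n^{\beta}}(\Hat{x})}\,,$$
multiply by the bound $\xi^n_z(n^{\beta}\Hat{x}+x^n_*,k)\le\widetilde{C}(n^{\beta}\abs{\Hat{x}}+n)$ from \cref{PL5.2H}, and use \cref{EL5.3C} to obtain an estimate of order $(n^{-2\beta}\abs{\Hat{x}}+n^{1-3\beta})(1+\abs{\Hat{x}}^3)\sV(\Hat{x})$. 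For $\cR^n_2$, \cref{PL5.2E} gives $|\Bar{a}^n(\Hat{x})-\Bar{\Gamma}^n(\Hat{x},k)|\le C(n^{-\beta}\abs{\Hat{x}}+n^{1-2\beta})$, which combined with \cref{EL5.3B} yields $\order(n^{-\beta}\abs{\Hat{x}}+n^{1-2\beta})\order((1+\abs{\Hat{x}}^2)\sV(\Hat{x}))$. For $\cR^n_3$, I would use the Lipschitz bound \cref{EA2.2B} on $\xi^n_z$ (hence on $\Xi^n$) giving $|\Xi^n_i(x^n_*+n^{\beta}\Hat{x},l)-\Xi^n_i(x^n_*,l)|\le Cn^{\beta}\abs{\Hat{x}}$ together with $|\Xi^n_j(x^n_*,h)|\le Cn$ from \cref{PL5.2G}, to obtain $\order(n^{1-\alpha-\beta}\abs{\Hat{x}})\order((1+\abs{\Hat{x}}^2)\sV(\Hat{x}))$. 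For $\cR^n_4$, the first-order Taylor expansion of $\partial_j u^n_f$ with Lipschitz Hessian gives $|[\cD^n_z]^1_j u^n_f(\Hat{x})|\le C n^{-2\beta}(1+\abs{\Hat{x}}^3)\sV(\Hat{x})$, which combines with $|\xi^n_z|\le C(n^{\beta}\abs{\Hat{x}}+n)$ and $|\Xi^n_j(x^n_*,h)|\le Cn$ to produce $\order(n^{1-\alpha-2\beta}\abs{\Hat{x}}+n^{2-\alpha-3\beta})\order((1+\abs{\Hat{x}}^3)\sV(\Hat{x}))$.

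For $\cR^n_5$ and $\cR^n_6$, I would appeal directly to \cref{EL5.2A,EL5.2B} and substitute the gradient estimates \cref{EL5.3B,EL5.3C}, along with the Lipschitz consequences
$$\babs{\grad u^n_f(\Hat{x}+n^{-\beta}z)-\grad u^n_f(\Hat{x})}\,\le\, Cn^{-\beta}(1+\abs{\Hat{x}}^2)\sV(\Hat{x})\,,$$
$$\babs{\grad^2 u^n_f(\Hat{x}+n^{-\beta}z)-\grad^2 u^n_f(\Hat{x})}\,\le\, Cn^{-\beta}(1+\abs{\Hat{x}}^3)\sV(\Hat{x})\,.$$
These produce a finite list of terms of the form $n^{-\gamma}\abs{\Hat{x}}^m(1+\abs{\Hat{x}}^j)\sV(\Hat{x})$ with $j\le 3$ and $m\le 2$, so after absorbing polynomials they fit inside the target $(1+\abs{\Hat{x}}^5)\sV(\Hat{x})$.

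The main obstacle, and the only nontrivial bookkeeping, is verifying that each exponent $\gamma$ satisfies $\gamma\ge\nicefrac{\alpha}{2}\wedge\nicefrac{1}{2}$. The key inequalities to check case-by-case with $\beta=\nicefrac{1}{2}$ (when $\alpha\ge 1$) and $\beta=1-\nicefrac{\alpha}{2}$ (when $\alpha<1$) are $1-3\beta\le-(\nicefrac{\alpha}{2}\wedge\nicefrac{1}{2})$, $1-\alpha-\beta\le-(\nicefrac{\alpha}{2}\wedge\nicefrac{1}{2})$, $2-\alpha-3\beta\le-(\nicefrac{\alpha}{2}\wedge\nicefrac{1}{2})$, and the analogous inequalities implicit in \cref{EL5.2A,EL5.2B}. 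In each instance the split at $\alpha=1$ makes the worst exponent exactly $-\nicefrac{1}{2}$ (for $\alpha\ge 1$) or $-\nicefrac{\alpha}{2}$ (for $\alpha<1$), explaining the rate $n^{-\nicefrac{\alpha}{2}\wedge\nicefrac{1}{2}}$. Summing the six bounds and observing that the largest polynomial factor is $(1+\abs{\Hat{x}}^4)\sV(\Hat{x})$ from $\cR^n_1$ and $\cR^n_4$, which is dominated by $(1+\abs{\Hat{x}}^5)\sV(\Hat{x})$, yields \cref{EL5.4A}.
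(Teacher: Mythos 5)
Your overall strategy is exactly the paper's: bound each $\cR^n_i[u^n_f]$ separately via the gradient estimates of \cref{L5.3} (using $\beta\ge\nicefrac{1}{2}$ so that \cref{EL5.3C} covers displacements of size $m_0 n^{-\beta}$), feed \cref{EL5.3B,EL5.3C} into \cref{L5.2} for $\cR^n_5,\cR^n_6$, and then check that every exponent clears $\nicefrac{\alpha}{2}\wedge\nicefrac{1}{2}$ using $\beta=\max\{\nicefrac{1}{2},1-\nicefrac{\alpha}{2}\}$. Your treatments of $\cR^n_1$, $\cR^n_3$, $\cR^n_4$, $\cR^n_5$, $\cR^n_6$ match the paper's and the exponent bookkeeping for those terms is correct.

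There is, however, a genuine error in your handling of $\cR^n_2$. By \cref{D5.1}, $\cR^n_2[f](\Hat{x})$ is built from the difference $\Bar{\Gamma}^n_{ij}(\Hat{x},k)-\Bar{\Gamma}^n_{ij}(0,k)$ (averaged over $\pi$), i.e.\ the spatial increment of the scaled covariance matrix; it is \emph{not} built from $\Bar{a}^n(\Hat{x})-\Bar{\Gamma}^n(\Hat{x},k)$, which is the quantity bounded in \cref{PL5.2E} and which enters only through $\Tilde{g}^n_1$ and hence $\cR^n_5$. Besides bounding the wrong object, the estimate you quote carries the term $n^{1-2\beta}$, and the inequality $1-2\beta\le-(\nicefrac{\alpha}{2}\wedge\nicefrac{1}{2})$ is false: for $\alpha\ge1$ one has $\beta=\nicefrac{1}{2}$ and $n^{1-2\beta}=1$, so your stated bound for $\cR^n_2$ does not decay at all and cannot yield \cref{EL5.4A}; tellingly, this exponent is missing from your final case-by-case checklist. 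The fix is immediate and is what the paper does: since $\Bar{\Gamma}^n(\Hat{x},k)=n^{-2\beta}\Gamma^n(n^{\beta}\Hat{x}+x^n_*,k)$ and $\xi^n_z$ is Lipschitz in its first argument by \cref{EA2.2B}, one gets $\babs{\Bar{\Gamma}^n_{ij}(\Hat{x},k)-\Bar{\Gamma}^n_{ij}(0,k)}\le \widetilde{C}\widetilde{N}_0m_0^2\,n^{-\beta}\abs{\Hat{x}}$, which together with \cref{EL5.3B} gives $\cR^n_2[u^n_f](\Hat{x})=n^{-\beta}\,\order\bigl((1+\abs{\Hat{x}}^3)\sV(\Hat{x})\bigr)$, and $n^{-\beta}\le n^{-(\nicefrac{\alpha}{2}\wedge\nicefrac{1}{2})}$. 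A minor further slip: the degree-five polynomial factor in \cref{EL5.4A} comes from $\cR^n_5$ and $\cR^n_6$ (terms of the form $\abs{\Hat{x}}^2$ times the increment of $\grad^2 u^n_f$, which is $\order((1+\abs{\Hat{x}}^3)\sV)$), not from $\cR^n_1$ and $\cR^n_4$ as you state at the end; this does not affect the conclusion since the target allows degree five.
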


\begin{proof}
Note that
\begin{equation*}
[\cD^n_z]^{0}u^n_f(\Hat{x}) \,=\, 
n^{-2\beta}\sum_{i,j\in\cI}z_iz_j\partial_{ij}
\bigl(u^n_f(\Hat{x} + \varepsilon^n_{x,z}) - u^n_f(\Hat{x})\bigr)
\end{equation*}
for $\varepsilon^n_{\Hat{x},z}\in 
\prod_{i\in\cI}[\Hat{x}_i, \Hat{x}_i + n^{-\beta}z_i]$.
Applying \cref{PT2.1B,EL5.3C}, we obtain 
\begin{equation}\label{PL5.4A}
\cR^n_1[u^n_f](\Hat{x},k) \,=\, 
\frac{1}{n^{\beta}}\order\bigl((1 + \abs{\Hat{x}}^4)\sV(\Hat{x})\bigr)
\qquad \forall\, (\Hat{x},k)\in\widehat{\fX}^n\times\cK\,.
\end{equation}
By \cref{PL5.2B}, we have
$$
\abs{\Bar{\Gamma}^n_{ij}(\Hat{x},k) - \Bar{\Gamma}^n_{ij}(0,k)} \,\le\, 
\widetilde{C}\widetilde{N}_0m_0^2n^{-\beta}\abs{\Hat{x}} 
\qquad \forall\, (\Hat{x},k)\in\widehat{\fX}^n\times\cK\,,
$$
and thus it follows by \cref{EL5.3B} that
\begin{equation}\label{PL5.4B}
\cR^n_2[u^n_f](\Hat{x}) \,=\,
\frac{1}{n^{\beta}}\order\bigl((1 + \abs{\Hat{x}}^3)\sV(\Hat{x})\bigr)\,.
\end{equation} 
Applying \cref{D5.1}, \cref{PL5.2B,PL5.2G,EL5.3B},
we obtain
\begin{equation}\label{PL5.4C}
\cR^n_3[u^n_f](\Hat{x},k) \,=\, \frac{1}{n^{\alpha + \beta - 1}}
\order\bigl((1 + \abs{\Hat{x}}^3)\sV(\Hat{x})\bigr)\qquad \forall\,k\in\cK\,.
\end{equation}
Repeating the above procedure, and using \cref{D5.1}, 
\cref{PL5.2A,PL5.2G,EL5.3C}, we obtain
\begin{equation}\label{PL5.4D}
\cR^n_4[u^n_f](\Hat{x},k) \,=\,  \order\biggl(\frac{1}{n^{\alpha+3\beta-2}}\biggr)
\order\bigl((1 + \abs{\Hat{x}}^4)\sV(\Hat{x})\bigr) \qquad \forall\,k\in\cK \,.
\end{equation}
It follows by \cref{L5.2}, \cref{EL5.3B,EL5.3C} that
\begin{align}
\cR^n_5[u^n_f](\Hat{x},k) &\,=\, \order\biggl(\frac{1}{n^{\alpha + \beta- 1}}\biggr)
\order\bigl((1 + \abs{\Hat{x}}^5)\sV(\Hat{x})\bigr)\,,\label{PL5.4E} \\
\intertext{and }
\cR^n_6[u^n_f](\Hat{x},k) &\,=\, \order\biggl(\frac{1}{n^{2\alpha + 3\beta- 3}}\biggr)
\order\bigl((1 + \abs{\Hat{x}}^5)\sV(\Hat{x})\bigr)\,,\label{PL5.4F}
\end{align}
for all $k\in\cK$. On the other hand,
when $\alpha > 1$,
$\beta = \frac{1}{2}$, $\alpha + \beta - 1 \ge \beta$ and $2\alpha + 3\beta -3 \ge \beta$,  
and when $\alpha \le 1$,
$\alpha + \beta - 1 = 2\alpha + 3\beta - 3 = \nicefrac{\alpha}{2}$
and $\alpha + 3\beta - 2 = \beta$.
Then, by using 
\cref{PL5.4A,PL5.4B,PL5.4C,PL5.4D,PL5.4E,PL5.4F}, we have shown \cref{EL5.4A}.
This completes the proof.
\end{proof}

\smallskip
\begin{proof}[Proof of \cref{T2.2}]
Without loss of generality, we assume that $\nu^n(f) = 0$
(see \cite[Remark 3.2]{Gurvich14}).
Recall the function $g^n$ in \cref{E-g}.
Applying \cref{L5.1}, it follows that
\begin{equation}\label{PT2.2A}
\begin{aligned}
&\Exp_{\uppi^n}\Bigl[u^n_f\bigl(\widehat{X}^n(T)\bigr)
+ g^n[u^n_f]\bigl(\widehat{X}^n(T),J^n(T)\bigr)\Bigr]  \\
&\mspace{60mu}\,=\, 
\Exp_{\uppi^n}\Bigl[u^n_f(\widehat{X}^n(0))
+ g^n[u^n_f](\widehat{X}^n(0),J^n(0))\Bigr] \\
&\mspace{120mu}
+ \Exp_{\uppi^n}\biggl[\int_0^{T}\sA^n u^n_f\bigl(\widehat{X}^n(s)\bigr)\,\D{s}\biggr]
+\sum_{j=1}^{6}\Exp_{\uppi^n}\biggl[\int_0^{T}\cR^n_j[u_f^n]
\bigl(\widehat{X}^n(s),J^n(s)\bigr)\,\D{s}\biggr]\,.
\end{aligned}
\end{equation}
By \cref{L5.4}, we have
\begin{equation}\label{PT2.2B}
\begin{aligned}
\sum_{j=1}^{6}\Exp_{\uppi^n}\biggl[\int_0^{T}\cR^n_j[u_f^n]&
\bigl(\widehat{X}^n(s),J^n(s)\bigr)\,\D{s}\biggr]\\
&\,\le\, \order\biggl(\frac{1}{n^{\nicefrac{\alpha}{2}\wedge\nicefrac{1}{2}}}\biggr)
\Exp_{\uppi^n}\biggl[\int_0^{T}\Bigl(1 + \sV\bigl(\widehat{X}^n(s)\bigr)
\bigl(1 + \abs{\widehat{X}^n(s)}^5\bigr)\Bigr)\,\D{s}\biggr]\\ 
&\,=\, \order\biggl(\frac{1}{n^{\nicefrac{\alpha}{2}\wedge\nicefrac{1}{2}}}\biggr)
T \int_{\RR^d\times\cK}(1 +\sV(\Hat{x}))(1 + \abs{\Hat{x}})^5\uppi^n(\D{\Hat{x}},\D{k})\,.
\end{aligned}
\end{equation}
Applying \cref{E-g,PL5.2H,EL5.3B}, we obtain
\begin{equation}\label{PT2.2BB}
\abs{g^n(\Hat{x},k)} \,\le\, C_1\bigl(1 + (1 + \abs{\Hat{x}}^3)\sV(\Hat{x})\bigr)
\qquad \forall\,(\Hat{x},k)\in\widehat{\fX}^n\times\cK\,,
\end{equation}
for some positive constant $C_1$ and all large enough $n$.
Since $\babs{u^n_f}\in\order(\sV)$ by the claim in (22) of \cite{Gurvich14},
then it follows by \cref{PT2.2BB} that
\begin{equation}\label{PT2.2C}
\Babs{\Exp_{\uppi^n}\Bigl[u^n_f\bigl(\widehat{X}^n(T)\bigr)
+ g^n[u^n_f]\bigl(\widehat{X}^n(T),J^n(T)\bigr)\Bigr]} \,\le\, 
C_2\biggl( 1 + \int_{\RR^d\times\cK}\sV(\Hat{x})
(1 + \abs{\Hat{x}})^3\uppi^n(\D{\Hat{x}},\D{k})\biggr)
\end{equation}
for some positive constant $C_2$.
Therefore, dividing both sides of \cref{PT2.2A} by $T$ 
and taking $T\rightarrow\infty$,  and applying \cref{ET2.2A,EL5.3A,PT2.2B,PT2.2C},
we obtain
$$
\abs{\uppi^n(f)} \,=\, 
\order\biggl(\frac{1}{n^{\nicefrac{\alpha}{2}\wedge\nicefrac{1}{2}}}\biggr)\,.
$$
This completes the proof.
\end{proof}

\smallskip
\begin{proof}[Proof of \texorpdfstring{\cref{C2.2}}{}]
We claim that for some positive constants $C_1$, $\kappa_1$, and $\kappa_2$,
a ball $\sB$,
 and a sequence $\epsilon_n\rightarrow 0$, as $n\rightarrow\infty$, we have
\begin{equation}\label{PC2.2A}
\begin{aligned}
\widehat\cL^n \widetilde{\sV}(\Hat{x}) +
\widehat\cL^n g^n[\widetilde{\sV}](\Hat{x},k) 
&\,=\, \sA^n \widetilde{\sV}(\Hat{x}) + \sum_{i=1}^6 \cR^n_i[\widetilde{\sV}](\Hat{x},k)
\\
&\,\le\, \kappa_1\Ind_{{\sB}}(\Hat{x}) - \kappa_2 \widetilde{\sV}(\Hat{x}) + C_1 + 
\epsilon_n\widetilde{\sV}(\Hat{x})
\qquad \forall\,(\Hat{x},k)\in\widehat{\fX}^n\times\cK\,,
\end{aligned}
\end{equation}
Indeed the equality in \cref{PC2.2A} follows by \cref{L5.1}.
Following the calculation in the proof of \cref{L5.4}, and using \cref{EC2.2B},
the inequality in \cref{PC2.2A} follows by \cref{A2.3,L5.2}.
By \cref{A2.2} and \cref{EC2.2B}, we have
\begin{equation}\label{PC2.2B}
C_2\widetilde{\sV}(\Hat{x}) - C_3 
\,\le\, \widetilde{\sV}(\Hat{x}) + g^n[\widetilde{\sV}](\Hat{x},k)
\,\le\, C_3(\widetilde{\sV}(\Hat{x}) + 1) 
\qquad \forall\,(\Hat{x},k)\in\widehat{\fX}^n\times\cK\,,
\end{equation}
for some positive constants $C_2$ and $C_3$.
Combining \cref{PC2.2A,PC2.2B}, we see that
$V(\Hat{x},k)\df \widetilde{\sV}(\Hat{x}) + g^n[\widetilde{\sV}](\Hat{x},k)$
satisfies
$\widehat\cL^n V(\Hat{x},k) \le \kappa_3 \Ind_{\sB'}(x) - 
\kappa_4 V(\Hat{x},k)$ for
some positive constants $\kappa_3$ and $\kappa_4$, and a ball $\sB'$.
This together with \cref{PC2.2B}
and the hypothesis in \cref{EC2.2A} implies \cref{ET2.2A}, 
and completes the proof.
\end{proof}

\appendix

\section{The diffusion limit}\label{AppA}
\Cref{prop2.1} which follows, shows that under suitable assumptions,
the processes $\widehat{X}^n$ in \cref{E-hatX} and $\widehat{Y}^n$ in \cref{E-sde} 
have the same diffusion limit. 
This proposition is interesting in its own right.

Let $(\DD^d,\cJ_1)$ denote the space of $\Rd$-valued c\'{a}dl\'ag functions endowed with
the $\cJ_1$ topology (see, e.g., \cite{Patrick-99}).

\begin{proposition}\label{prop2.1}
Grant \cref{A2.2}. In addition, suppose that
$\widehat{X}^n(0) \Rightarrow y_0$, 
\begin{equation}\label{EP2.1A}
\frac{\xi^n_z(x^n_* + n^{\beta}\Hat{x},k) - \xi^n_z(x^n_*,k)}{n^{\beta}}
\,\xrightarrow[n\to\infty]{}\, \widehat{\xi}_z(\Hat{x},k) \qquad \forall\,
(k,z)\in\cK\times\sZ^n\,,
\end{equation}
uniformly on compact sets in $\RR^d$, $\widehat{M}^n$ is a square integrable
martingale, and
\begin{equation}\label{EP2.1B}
\frac{\Xi^n(x^n_*,k)}{n}\,\xrightarrow[n\to\infty]{}\, \overline{\Xi}(k)\,\in\,\RR^d
\qquad \forall\,k\in\cK\,.
\end{equation}
Then, $\widehat{X}^n$ and $\widehat{Y}^n$ have the same diffusion limit
$\widehat{X}$ in $(\DD^d,\cJ_1)$,
and $\widehat{X}$ is the strong solution of the SDE
\begin{equation*}
\D \widehat{X}(t) \,=\, \Bar{b}\bigl(\widehat{X}(t)\bigr)\,\D{t}
+ \sigma_\alpha \D W(t)\,,
\end{equation*}
with $\widehat{X}(0) = y_0$, where
$$
\Bar{b}(\Hat{x}) 
\,\df\, \sum_{k\in\cK}\pi_k\sum_{z}z\,\widehat{\xi}_z(\Hat{x},k)\,,$$ 
$$
(\sigma_\alpha)\transp\sigma_\alpha \,\df\, \begin{cases}
\sum_{k\in\cK}\pi_k\Bar{\Gamma}(k)\,, &\quad \text{for } \alpha>1\,, \\
\sum_{k\in\cK}\pi_k\Bar{\Gamma}(k) + \Theta\,, &\quad \text{for } \alpha = 1\,, \\
\Theta\,, &\quad \text{for } \alpha <1\,,
\end{cases}
$$
and $\Theta = [\theta_{ij}]$ 
is defined by $$\theta_{ij} \,\df\,
2\sum_{k,\ell\in\cK}\overline{\Xi}_i(k)\overline{\Xi}_j(\ell)\pi_k\Upsilon_{k\ell}\,,
\quad i,j\in\cI\,.$$
\end{proposition}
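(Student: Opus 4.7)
The plan is to handle $\widehat Y^n$ and $\widehat X^n$ separately and then verify that both limits coincide.

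For $\widehat Y^n$, the drift $\Bar b^n$ of the SDE \cref{E-sde} is uniformly Lipschitz in $n$ by \cref{EA2.2B}. I would first show, by direct computation using \cref{EP2.1A,EP2.1B,EA2.2A}, that $\Bar b^n\to\Bar b$ locally uniformly, and that $\Sigma^n\to(\sigma_\alpha)\transp\sigma_\alpha$ by a case analysis on $\alpha$: when $\alpha\ge 1$ we have $\beta=\nicefrac12$, so $\Bar a^n(0)\to\sum_k\pi_k\Bar\Gamma(k)$ by \cref{EA2.2D}, while $\Theta^n$ in \cref{ES2.2B} carries the scaling $n^{2-\alpha-2\beta}=n^{1-\alpha}$ and thus vanishes when $\alpha>1$; when $\alpha\le 1$ the identity $\alpha+2\beta=2$ combined with \cref{EP2.1B} yields $\Theta^n\to\Theta$, and the prefactor $n^{1-2\beta}$ in $\Bar a^n(0)$ tends to $0$ for $\alpha<1$. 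Classical SDE stability (or a Kurtz--Protter argument) then delivers $\widehat Y^n\Rightarrow\widehat X$ in $(\DD^d,\cJ_1)$.

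For $\widehat X^n$ I would proceed in three steps, using \cref{L5.1} as the central tool. Step one is tightness of $\{\widehat X^n\}$ in $(\DD^d,\cJ_1)$: Burkholder--Davis--Gundy applied to $\widehat M^n$, the Lipschitz bound \cref{EA2.2B}, and a Gronwall argument in \cref{E-hatX} yield uniform-in-$n$ moment bounds on compact time intervals; since the jumps have size $O(n^{-\beta})\to 0$, Aldous's criterion gives tightness with continuous limit points. Step two is the corrector identity. For $f\in C_c^\infty(\RR^d)$, Dynkin's formula applied to $F^n\df f+g^n[f]$ on $(\widehat X^n,J^n)$ combined with \cref{L5.1} makes
\begin{equation*}
F^n\bigl(\widehat X^n(t),J^n(t)\bigr)-F^n\bigl(\widehat X^n(0),J^n(0)\bigr)-\int_0^t\Bigl[\sA^n f\bigl(\widehat X^n(s)\bigr)+\sum_{i=1}^6\cR^n_i[f]\bigl(\widehat X^n(s),J^n(s)\bigr)\Bigr]\D s
\end{equation*}
a local martingale. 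Three asymptotic estimates, uniform over $k\in\cK$ and over compact sets in $\Hat x$, close the identification: (i) $g^n[f]\to 0$, since $g^n_1[f]$ and $g^n_2[f]$ carry a factor $n^{-\alpha}$ from \cref{E-Tg1,E-Tg2,E-g1g2} while $g^n_3[f]$ in \cref{E-g3} is of order $n^{1-\alpha-\beta}=n^{-(\nicefrac{\alpha}{2}\wedge\nicefrac12)}$ via \cref{EA2.2C}; (ii) $\cR^n_i[f]\to 0$ for $i=1,\dotsc,6$, repeating the bookkeeping of \cref{L5.4} but replacing the bounds on $u^n_f$ by the boundedness of $f$ and its derivatives on its compact support (with \cref{L5.2} supplying the estimates for $i=5,6$); (iii) $\sA^n f\to\sA f$ locally uniformly from the first part. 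Step three is identification: along a weakly convergent subsequence, the Skorokhod representation together with the uniform bounds above let me pass to the limit and conclude that $f(\widehat X(t))-f(\widehat X(0))-\int_0^t\sA f(\widehat X(s))\,\D s$ is a martingale for every $f\in C_c^\infty(\RR^d)$. Lipschitz continuity of the limiting drift $\Bar b$ (inherited from the uniformly Lipschitz $\Bar b^n$) and constant additive noise make the SDE strongly well-posed, so the martingale problem for $\sA$ has a unique solution, and all subsequential limits coincide with $\widehat X$.

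The main obstacle is Step two. The scaled drift $\widehat\Xi^n(0,k)=\Xi^n(x^n_*,k)/n^\beta$ grows as $n^{1-\beta}$, so a direct Dynkin argument on $f$ alone has no chance of converging. The corrector $g^n_3[f]$ is engineered precisely to absorb this divergence through the cancellation in \cref{PL5.1D}, which hinges on \cref{EA2.2A}; the trade-off is that $\cQ^n g^n_3[f]$ generates a bounded second-order term that becomes the $\Theta$-contribution to the limiting covariance. The delicate part is tracking powers of $n$ in the remaining six residuals $\cR^n_i[f]$ and in $g^n[f]$ simultaneously across the exponent regimes $\alpha>1$, $\alpha=1$, $\alpha<1$---the same case split that selects which of $\Bar\Gamma$ and $\Theta$ survives in the first part, which is why the two limits agree.
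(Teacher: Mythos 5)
Your route is genuinely different from the paper's. For $\widehat{X}^n$ the paper argues pathwise rather than through the generator: it establishes stochastic boundedness of $\widehat{M}^n$ and of $\widehat{X}^n$ (via \cite{PTW07}), writes $\widehat{X}^n$ as in \cref{ER2.2B} with the large centered drift isolated in the term $\Hat{R}^n(t)=\sum_{k\in\cK} n^{-\beta}\Xi^n(x^n_*,k)\int_0^t\bigl(\Ind_k(J^n(s))-\pi_k\bigr)\,\D{s}$, invokes the FCLT for the scaled centered occupation measures of $J^n$ from \cite{ABMT} to show that $\Hat{R}^n$ converges to a Wiener process with covariance $\Theta$ when $\alpha\le1$ and to zero when $\alpha>1$, shows the remaining switching fluctuation inside $\Hat{S}^n$ is negligible, and concludes by the continuity of the integral map and the continuous mapping theorem. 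Your Step two replaces all of this by a perturbed-test-function/martingale-problem argument built on \cref{L5.1}; the exponent bookkeeping you sketch is correct ($g^n[f]$ and $\cR^n_1,\dotsc,\cR^n_6$ applied to a fixed $f\in C_c^\infty$ vanish locally uniformly, and they are supported in a fixed neighbourhood of the support of $f$ because of the bounded jumps), and uniqueness of the limiting martingale problem does follow from the Lipschitz drift and constant diffusion. So your identification step would yield an alternative proof, with the $\Theta$-part of the covariance produced by the corrector $g^n_3$ instead of by the occupation-measure FCLT.

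The genuine gap is Step one (tightness). In \cref{E-hatX} the drift is $\widehat{\Xi}^n(\widehat{X}^n(s),J^n(s))$, and although $\widehat{\Xi}^n(\cdot,k)$ is Lipschitz uniformly in $n$ by \cref{EA2.2B}, its value at the origin, $\widehat{\Xi}^n(0,k)=n^{-\beta}\Xi^n(x^n_*,k)$, is of order $n^{1-\beta}\to\infty$; only its $\pi$-average vanishes, by \cref{EA2.2A}. Thus the best pointwise bound is $\abs{\widehat{\Xi}^n(\Hat{x},k)}\le C\bigl(n^{1-\beta}+\abs{\Hat{x}}\bigr)$, and BDG plus Gronwall applied directly to \cref{E-hatX} gives moment bounds that blow up with $n$; likewise the Aldous increment over an interval of length $\delta$ picks up a term of size $n^{1-\beta}\delta$, which is not small for fixed $\delta$. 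This diverging piece can only be tamed by the averaging of the fast chain: one must isolate $\sum_{k\in\cK}\widehat{\Xi}^n(0,k)\int\bigl(\Ind_k(J^n(s))-\pi_k\bigr)\D{s}$ and control it through the FCLT (or Poisson-equation moment estimates) for the centered occupation measure of $J^n$, which is exactly the term $\Hat{R}^n$ treated via \cite{ABMT} in the paper; it is stochastically bounded only because $n^{1-\beta}\le n^{\nicefrac{\alpha}{2}}$ matches the CLT scaling of the chain with generator $n^\alpha Q$. Alternatively, you could run the perturbed-test-function machinery for tightness as well (correctors for coordinate and quadratic functions, or Kurtz's tightness criterion applied to $f+g^n[f]$), but as written, BDG, \cref{EA2.2B} and Gronwall alone do not give tightness, nor even compact containment. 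Once this is repaired, the remainder of your plan goes through.
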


\begin{proof}
Recall that $\sum_{k\in\cK} \pi_k\Xi^n({x}^n_*,k) = 0$
and $\widehat{\Xi}^n(0,k) = n^{-\beta}\Xi^n({x}^n_*,k)$.
Recall the representation of $\widehat{X}^n$ in \cref{E-hatX}.
By \cite[Lemma 5.8]{PTW07},  $\widehat{M}^n$ is stochastically bounded;
see also the proof of \cite[Theorem 2.1 (i)]{ADPZ19}.
Since $\widehat{\Xi}^n$ is Lipschitz continuous by \cref{EA2.2B}, 
it follows by the same argument in the proof \cite[Lemma 5.5]{PTW07}
that $\widehat{X}^n$ is stochastically bounded.
Thus, applying \cite[Lemma 5.9]{PTW07}, $n^{-1}X^n$
converges to the zero process in $(\DD^d,\cJ_1)$.
We write $\widehat{X}^n$ as
\begin{equation}\label{ER2.2B}
\begin{aligned}
\widehat{X}^n(t) &\,=\, \widehat{X}^n(0) + \sum_{k\in\cK}\int_0^{t}
\bigl(\widehat{\Xi}^n(\widehat{X}^n(s),k) - \widehat{\Xi}^n(0,k)\bigr)
\Ind_k\bigl(J^n(s)\bigr)\,\D{s} + \widehat{M}^n(t)\\
&\mspace{150mu}
+ \sum_{k\in\cK}\frac{\Xi^n({x}^n_*,k)}{n}
n^{1 - \beta}\int_0^{t}\bigl(\Ind_k\bigl(J^n(s)\bigr) - \pi_k\bigr)\,\D{s}\,.
\end{aligned}
\end{equation}
Let $\Hat{S}^n(t)$ and $\Hat{R}^n(t)$ be $d$-dimensional processes  
denoting the second and fourth terms on the right-hand side of \cref{ER2.2B}.
It follows by \cite[Proposition 3.2]{ABMT} and \cref{EA2.2D} that
\begin{equation}\label{ER2.2C}
\Hat{R}^n \,\Rightarrow\, 
\begin{cases} W_R\,, &\quad \text{for } \alpha\,\le\,1\,, \\
0\,, &\quad \text{for } \alpha\,>\,1\,,
\end{cases}
\quad \text{in} \quad (\DD^d,\cJ_1)\,,
\end{equation}
as $n\rightarrow\infty$, where $W_R$ is a $d$-dimensional Wiener process
with the covariance matrix $\Theta$.
On the other hand, we have 
\begin{equation}\label{ER2.2D}
\begin{aligned}
\Hat{S}^n(t) \,=\, \sum_{k\in\cK}\int_0^{t}n^{-\nicefrac{\alpha}{2}}
\bigl(\widehat{\Xi}^n(\widehat{X}^n(s),k) - \widehat{\Xi}^n(0,k)\bigr)
\,\D\,\biggl(n^{\nicefrac{\alpha}{2}}\int_0^{s}\bigl(\Ind_k(J^n(u))
- \pi_k\bigr)\,\D{u}\biggr) \\
+ \sum_{k\in\cK}\pi_k\int_0^{t}\bigl(\widehat{\Xi}^n(\widehat{X}^n(s),k)
- \widehat{\Xi}^n(0,k)\bigr)\,\D{s}\,.
\end{aligned}
\end{equation}

It follows by the convergence of $n^{-1}X^n$ to the zero process that
$n^{-\nicefrac{\alpha}{2}}\widehat{X}^n$ also converges to the 
zero process uniformly on compact sets in probability.
Note that, for some constant $C$, we have
$\abs{\widehat{\Xi}^n(\widehat{X}^n(s),k)
- \widehat{\Xi}^n(0,k)} \le C\abs{\widehat{X}^n(s)}$ 
for all $s\ge 0$ by \cref{EA2.2B}.
It then follows by \cite[Proposition 3.2]{ABMT} 
and \cite[Theorem 5.2]{JMTW17} that the first term
on the right-hand side of \cref{ER2.2D} converges 
to the zero process uniformly on compact sets in probability, 
as $n\rightarrow\infty$.
See also the proofs of Lemma 4.4 in \cite{JMTW17} and Lemma 4.1 in \cite{ADPZ19}.
It is clear by \cref{EP2.1A} that
\begin{equation}\label{ER2.2E}
h^n(\Hat{x}) \,\df\, \sum_{k\in\cK}\pi_k\bigl(\widehat{\Xi}^n(\Hat{x},k)
- \widehat{\Xi}^n(0,k)\bigr) \,\longrightarrow\, 
\sum_{k\in\cK}\pi_k\sum_{z}z\,\widehat{\xi}_z(\Hat{x},k)
\end{equation}
uniformly on compact sets in $\RR^d$.
Note that the function $h^n$ is Lipschitz continuous by \cref{EA2.2B}.
By \cite[Theorem 4.1]{PTW07} (see also \cite[Lemma 4.1]{JMTW17}),
the integral mapping $x^n = \Psi^n(z^n)\colon \DD^d\rightarrow\DD^d$ defined by
$$
x^n(t) \,=\, z^n(t) + \int_0^{t} h^n(x^n(s)) \,\D{s}  
\qquad \forall\,n\in\NN\,,
$$
is continuous in $(\DD^d,\cJ_1)$.
Thus, applying the continuous mapping theorem and
using \cref{ER2.2B,ER2.2C,ER2.2D,ER2.2E}, we obtain
$$
\widehat{X}^n \,\Rightarrow\, \widehat{X} \quad \text{in} \quad (\DD^d,\cJ_1)\,.
$$
Recall the definitions
of $\Bar{\Gamma}^n$ and $\Theta^n$ in \cref{E-hatXi} and \cref{ES2.2B}, respectively. 
As $n\rightarrow\infty$, we have that
 $\Bar{\Gamma}^n(0,k)\rightarrow\Bar{\Gamma}(k)$ 
when $\alpha\ge 1$, and $\Bar{\Gamma}^n(0,k) \rightarrow 0$ when $\alpha < 1$
by \cref{EA2.2D}.
Since $\beta = \max\{1-\nicefrac{\alpha}{2},\nicefrac{1}{2}\}$, 
it then follows by \cref{EP2.1B} that $\Theta^n \rightarrow \Theta$ when $\alpha \le 1$, 
and $\Theta^n \rightarrow 0$ when $\alpha > 1$.
It is then straightforward to verify that
$\widehat{Y}^n\Rightarrow\widehat{X}$ in $(\DD^d,\cJ_1)$, 
as $n\rightarrow\infty$.
Therefore, $\widehat{X}^n$ and $\widehat{Y}^n$ have the same diffusion limit.
\end{proof}

\section*{Acknowledgments}
This research was supported in part by 
the Army Research Office through grant W911NF-17-1-001, and
in part by the National Science Foundation through grants
CMMI-1635410, DMS-1715210 and DMS-1715875.

\begin{bibdiv}
\begin{biblist}

\bib{ABMT}{article}{
      author={Anderson, D.},
      author={Blom, J.},
      author={Mandjes, M.},
      author={Thorsdottir, H.},
      author={de~Turck, K.},
       title={A functional central limit theorem for a {M}arkov-modulated
  infinite-server queue},
        date={2016},
        ISSN={1387-5841},
     journal={Methodol. Comput. Appl. Probab.},
      volume={18},
      number={1},
       pages={153\ndash 168},
         url={https://doi.org/10.1007/s11009-014-9405-8},
      review={\MR{3465473}},
}

\bib{AHP-18}{article}{
      author={Arapostathis, A.},
      author={Hmedi, H.},
      author={Pang, G.},
       title={On uniform exponential ergodicity of {M}arkovian multiclass
  many-server queues in the {H}alfin--{W}hitt regime},
        date={2018},
     journal={ArXiv e-prints},
      volume={1812.03528},
      eprint={https://arxiv.org/abs/1812.03528},
}

\bib{ABP15}{article}{
      author={Arapostathis, Ari},
      author={Biswas, Anup},
      author={Pang, Guodong},
       title={Ergodic control of multi-class {$M/M/N+M$} queues in the
  {H}alfin-{W}hitt regime},
        date={2015},
     journal={Ann. Appl. Probab.},
      volume={25},
      number={6},
       pages={3511\ndash 3570},
      review={\MR{3404643}},
}

\bib{ADPZ19}{article}{
      author={Arapostathis, Ari},
      author={Das, Anirban},
      author={Pang, Guodong},
      author={Zheng, Yi},
       title={Optimal control of {M}arkov-modulated multiclass many-server
  queues},
        date={2019},
     journal={Stochastic Systems},
      volume={9},
      number={2},
       pages={155\ndash 181},
}

\bib{APS19}{article}{
      author={Arapostathis, Ari},
      author={Pang, Guodong},
      author={Sandri\'{c}, Nikola},
       title={Ergodicity of a {L}\'{e}vy-driven {SDE} arising from multiclass
  many-server queues},
        date={2019},
     journal={Ann. Appl. Probab.},
      volume={29},
      number={2},
       pages={1070\ndash 1126},
      review={\MR{3910024}},
}

\bib{Patrick-99}{book}{
      author={Billingsley, P.},
       title={Convergence of probability measures},
     edition={Second},
      series={Wiley Series in Probability and Statistics: Probability and
  Statistics},
   publisher={John Wiley \& Sons, Inc., New York},
        date={1999},
        ISBN={0-471-19745-9},
        note={A Wiley-Interscience Publication},
      review={\MR{1700749}},
}

\bib{CD02}{article}{
      author={Coolen-Schrijner, Pauline},
      author={van Doorn, Erik~A.},
       title={The deviation matrix of a continuous-time {M}arkov chain},
        date={2002},
        ISSN={0269-9648},
     journal={Probab. Engrg. Inform. Sci.},
      volume={16},
      number={3},
       pages={351\ndash 366},
      review={\MR{1915152}},
}

\bib{DHT10}{article}{
      author={Dai, J.~G.},
      author={He, Shuangchi},
      author={Tezcan, Tolga},
       title={Many-server diffusion limits for {$G/Ph/n+GI$} queues},
        date={2010},
        ISSN={1050-5164},
     journal={Ann. Appl. Probab.},
      volume={20},
      number={5},
       pages={1854\ndash 1890},
      review={\MR{2724423}},
}

\bib{Dieker-Gao}{article}{
      author={Dieker, A.~B.},
      author={Gao, X.},
       title={Positive recurrence of piecewise {O}rnstein-{U}hlenbeck processes
  and common quadratic {L}yapunov functions},
        date={2013},
        ISSN={1050-5164},
     journal={Ann. Appl. Probab.},
      volume={23},
      number={4},
       pages={1291\ndash 1317},
      review={\MR{3098433}},
}

\bib{FA09}{article}{
      author={Fralix, Brian~H.},
      author={Adan, Ivo J. B.~F.},
       title={An infinite-server queue influenced by a semi-{M}arkovian
  environment},
        date={2009},
        ISSN={0257-0130},
     journal={Queueing Syst.},
      volume={61},
      number={1},
       pages={65\ndash 84},
      review={\MR{2470570}},
}

\bib{Gurvich14}{article}{
      author={Gurvich, I.},
       title={Diffusion models and steady-state approximations for
  exponentially ergodic {M}arkovian queues},
        date={2014},
        ISSN={1050-5164},
     journal={Ann. Appl. Probab.},
      volume={24},
      number={6},
       pages={2527\ndash 2559},
      review={\MR{3262510}},
}

\bib{HAP-19}{article}{
      author={Hmedi, H.},
      author={Arapostathis, A.},
      author={Pang, G.},
       title={On uniform stability of certain parallel server networks with no
  abandonment in the {H}alfin--{W}hitt regime},
        date={2019},
     journal={ArXiv e-prints},
      volume={1907.04793},
      eprint={https://arxiv.org/abs/1907.04793},
}

\bib{JMTW17}{article}{
      author={Jansen, H.~M.},
      author={Mandjes, M.},
      author={De~Turck, K.},
      author={Wittevrongel, S.},
       title={Diffusion limits for networks of {M}arkov-modulated
  infinite-server queues},
        date={2019},
     journal={Performance Evaluation},
      volume={135},
      status={online},
}

\bib{MS14}{article}{
      author={Jonckheere, Matthieu},
      author={Shneer, Seva},
       title={Stability of multi-dimensional birth-and-death processes with
  state-dependent 0-homogeneous jumps},
        date={2014},
        ISSN={0001-8678},
     journal={Adv. in Appl. Probab.},
      volume={46},
      number={1},
       pages={59\ndash 75},
      review={\MR{3189048}},
}

\bib{Kha12}{article}{
      author={Khasminskii, R.~Z.},
       title={Stability of regime-switching stochastic differential equations},
        date={2012},
        ISSN={0032-9460},
     journal={Probl. Inf. Transm.},
      volume={48},
      number={3},
       pages={259\ndash 270},
         url={https://doi.org/10.1134/S0032946012030064},
        note={Translation of Problemy Peredachi Informatsii {{\bf{4}}8} (2012),
  no. 3, 70--82},
      review={\MR{3224470}},
}

\bib{Kha07}{article}{
      author={Khasminskii, R.~Z.},
      author={Zhu, C.},
      author={Yin, G.},
       title={Stability of regime-switching diffusions},
        date={2007},
        ISSN={0304-4149},
     journal={Stochastic Process. Appl.},
      volume={117},
      number={8},
       pages={1037\ndash 1051},
      review={\MR{2340878}},
}

\bib{RMH13}{article}{
      author={Kumar, R.},
      author={Lewis, M.~E.},
      author={Topaloglu, H.},
       title={Dynamic service rate control for a single-server queue with
  {M}arkov-modulated arrivals},
        date={2013},
        ISSN={0894-069X},
     journal={Naval Res. Logist.},
      volume={60},
      number={8},
       pages={661\ndash 677},
      review={\MR{3146992}},
}

\bib{Mao99}{article}{
      author={Mao, X.},
       title={Stability of stochastic differential equations with {M}arkovian
  switching},
        date={1999},
        ISSN={0304-4149},
     journal={Stochastic Process. Appl.},
      volume={79},
      number={1},
       pages={45\ndash 67},
      review={\MR{1666831}},
}

\bib{MT-III}{article}{
      author={Meyn, Sean~P.},
      author={Tweedie, R.~L.},
       title={Stability of {M}arkovian processes. {III}. {F}oster-{L}yapunov
  criteria for continuous-time processes},
        date={1993},
     journal={Adv. in Appl. Probab.},
      volume={25},
      number={3},
       pages={518\ndash 548},
      review={\MR{1234295}},
}

\bib{PTW07}{article}{
      author={Pang, G.},
      author={Talreja, R.},
      author={Whitt, W.},
       title={Martingale proofs of many-server heavy-traffic limits for
  {M}arkovian queues},
        date={2007},
        ISSN={1549-5787},
     journal={Probab. Surv.},
      volume={4},
       pages={193\ndash 267},
         url={https://doi.org/10.1214/06-PS091},
      review={\MR{2368951}},
}

\bib{PZ17}{article}{
      author={Pang, G.},
      author={Zheng, Y.},
       title={On the functional and local limit theorems for {M}arkov modulated
  compound {P}oisson processes},
        date={2017},
        ISSN={0167-7152},
     journal={Statist. Probab. Lett.},
      volume={129},
       pages={131\ndash 140},
      review={\MR{3688525}},
}

\bib{Shao14}{article}{
      author={Shao, Jinghai},
      author={Xi, Fubao},
       title={Stability and recurrence of regime-switching diffusion
  processes},
        date={2014},
        ISSN={0363-0129},
     journal={SIAM J. Control Optim.},
      volume={52},
      number={6},
       pages={3496\ndash 3516},
         url={https://doi.org/10.1137/140962905},
      review={\MR{3274362}},
}

\bib{LQA17}{article}{
      author={Xia, L.},
      author={He, Q.},
      author={Alfa, A.~S.},
       title={Optimal control of state-dependent service rates in a {MAP}/{M}/1
  queue},
        date={2017},
        ISSN={0018-9286},
     journal={IEEE Trans. Automat. Control},
      volume={62},
      number={10},
       pages={4965\ndash 4979},
      review={\MR{3708873}},
}

\bib{Zeifman98}{incollection}{
      author={Zeifman, A.~I.},
       title={Stability of birth-and-death processes},
        date={1998},
      volume={91},
       pages={3023\ndash 3031},
        note={Stability problems for stochastic models, Part II (Moscow,
  1996)},
      review={\MR{1654973}},
}

\bib{Zhu-91}{article}{
      author={Zhu, Yixin},
      author={Prabhu, N.~U.},
       title={Markov-modulated {$PH/G/1$} queueing systems},
        date={1991},
        ISSN={0257-0130},
     journal={Queueing Systems Theory Appl.},
      volume={9},
      number={3},
       pages={313\ndash 322},
         url={https://doi.org/10.1007/BF01158469},
      review={\MR{1132180}},
}

\end{biblist}
\end{bibdiv}

\end{document}